\documentclass[twoside,11pt,reqno]{amsart}
\usepackage{natbib}
\usepackage{pstricks}
\usepackage{a4wide}
\usepackage[latin1]{inputenc}
\usepackage{hyperref}
\usepackage{graphicx}
\usepackage{amsmath}
\usepackage{color}
\usepackage{amsmath,amssymb,bbm,enumitem}
\usepackage{epic,eepic}


\newtheorem{theorem}{Theorem}[section]

\newtheorem{corollary}[theorem]{Corollary}
\newtheorem{definition}{Definition}[section]
\newtheorem{lemma}[theorem]{Lemma}
\newtheorem{proposition}[theorem]{Proposition}

\theoremstyle{definition}
\newtheorem{remark}[theorem]{Remark}

\newcommand{\rmd}{\mathrm{d}}
\newcommand{\rme}{\mathrm{e}}
\newcommand{\rmi}{\mathrm{i}}
\newcommand{\1}{\mathbbm{1}}

\newcommand{\ellset}{\mathcal{L}}
\newcommand{\K}{\mathbf{L}}

\newcommand{\bh}{\mathbf{h}}
\newcommand{\bW}{\mathbf{W}}
\newcommand{\bx}{\mathbf{x}}

\newcommand{\bZ}{\mathbf{Z}}
\newcommand{\bS}{\mathbf{S}}

\newcommand{\bof}{\mathbf{f}}

\newcommand{\bkappa}{\boldsymbol{\kappa}}

\title[Higher order chaotic limits of wavelets scalograms]{High order chaotic limits of wavelet
  scalograms under long--range dependence}

\author{M. Clausel}
\author{F. Roueff}
\author{M.~S. Taqqu}
\author{C. Tudor}

\address{Laboratoire Jean Kuntzmann\\
Université de Grenoble, CNRS\\
F38041 Grenoble Cedex 9}
\email{marianne.clausel@imag.fr}
\urladdr{\url{http://www-ljk.imag.fr/membres/Marianne.Clausel/}}
\address{Institut Mines--Telecom, Telecom ParisTech, CNRS LTCI, 46 rue Barrault\\
               75634 Paris Cedex 13, France}
\email{roueff@telecom-paristech.fr}
\urladdr{\url{http://perso.telecom-paristech.fr/~roueff/}}
\address{Departement of Mathematics and Statistics, Boston University,
  Boston, MA 02215, USA}
\email{murad@math.bu.edu}
\urladdr{\url{http://math.bu.edu/people/murad/}}
\address{Laboratoire Paul Painlev\'e, UMR 8524 du CNRS, Universit\'e
  Lille 1, 59655 Villeneuve d'Ascq, France. Associate member: Center for Applied Mathematics, Academy for Economical Studies, Bucharest, Romania}
\email{Ciprian.Tudor@math.univ-lille1.fr}
\urladdr{\url{https://sites.google.com/site/ciprianatudor/}}

\thanks{M. Clausel's research was partially supported by the PEPS project \emph{AGREE} and LabEx \emph{PERSYVAL-Lab} (ANR-11-LABX-0025-01) funded by the French program Investissement d'avenir.\\
 F. Roueff's research was
partially supported by the ANR project \emph{MATAIM} NT09 441552\\
Murad~S.Taqqu was supported in part by the
  NSF grants DMS--1007616 at Boston University.\\
  C. Tudor's research was
  partially supported by the ANR grant \emph{Masterie} BLAN 012103.}
\subjclass[2000]{Primary : 42C40, 60G18, 62M15; Secondary : 60G20,60G22}
\keywords{Hermite processes;Wavelet coefficients; Wiener chaos;self--similar processes ;
Long--range dependence.}

\begin{document}

\begin{abstract}
  Let $G$ be a non--linear function of a Gaussian process
  $\{X_t\}_{t\in\mathbb{Z}}$ with long--range dependence. The resulting process
  $\{G(X_t)\}_{t\in\mathbb{Z}}$ is not Gaussian when $G$ is not linear. We
  consider random wavelet coefficients associated with
  $\{G(X_t)\}_{t\in\mathbb{Z}}$ and the corresponding wavelet scalogram which
  is the average of squares of wavelet coefficients over locations. We obtain
  the asymptotic behavior of the scalogram as the number of observations and
  the analyzing scale tend to infinity. It is known that when $G$ is a Hermite
  polynomial of any order, then the limit is either the Gaussian or the
  Rosenblatt distribution, that is, the limit can be represented by a multiple
  Wiener-Itô integral of order one or two. We show, however, that there are
  large classes of functions $G$ which yield a higher order Hermite
  distribution, that is, the limit can be represented by a a multiple
  Wiener-Itô integral of order greater than two. This happens for example if
  $G$ is a linear combination of a Hermite polynomial of order $1$ and a
  Hermite polynomial of order $q>3$. The limit in this case can be Gaussian but
  it can also be a Hermite distribution of order $q-1>2$. This depends not only
  on the relation between the number of observations and the scale size but
  also on whether $q$ is larger or smaller than a new critical index $q^*$. The
  convergence of the wavelet scalogram is therefore significantly more complex
  than the usual one.
 \end{abstract}
 \maketitle
\newpage
\tableofcontents

\section{Introduction}\label{s:intro}
Denote by $X=\{X_{t}\}_{t\in\mathbb{Z}}$ a centered stationary
Gaussian process with unit variance and spectral density
$f(\lambda), \lambda \in (-\pi , \pi)$. Such a stochastic process
is said to have {\it
  short memory} or {\it short--range dependence} if $f(\lambda)$ is bounded
around $\lambda=0$ and {\it long memory} or {\it long--range
dependence} if $f(\lambda)\to\infty$ as $\lambda\to0$. We
will suppose that $\{X_{t}\}_{t\in\mathbb{Z}}$ has long memory
with memory parameter $0<d<1/2$, that is,
\begin{equation}\label{e:sdf0}
f(\lambda)\sim |\lambda|^{-2d}f^*(\lambda)\mbox{ as }\lambda \to 0
\end{equation}
where $f^*(\lambda)$ is a bounded spectral density which is
continuous and positive at the origin. This hypothesis is
semi--parametric in nature because the function $f^*$ plays the
role of a ``nuisance function''. It is convenient to set
\begin{equation}\label{e:sdf}
f(\lambda)=|1-\rme^{-\rmi\lambda}|^{-2d}f^*(\lambda),\quad\lambda\in
(-\pi,\pi]\;.
\end{equation}
Since the process $X$ is defined only if $\int_{-\pi }^{\pi}
f(\lambda)d\lambda <\infty$, we need to require $d<\frac{1}{2}$.

Consider now a process $\{Y_{t}\}_{t\in\mathbb{Z}}$, such that
\begin{equation}\label{e:DefY}
\left(\Delta^K Y\right)_{t}=G(X_{t}),\quad t\in\mathbb{Z}\;,
\end{equation}
for $K\geq 0$, where $(\Delta Y)_{t}=Y_{t}-Y_{t-1}$,
$\{X_t\}_{t\in\mathbb{Z}}$ is Gaussian with spectral density $f$
satisfying~(\ref{e:sdf}) and where $G$ is a function such that
$\mathbb{E}[G(X_{t})]=0$ and $\mathbb{E}[G(X_{t})^2]<\infty$.
While the process $\{Y_t\}_{t\in\mathbb{Z}}$ is not necessarily
stationary, its $K$--th difference $\Delta^K Y_t$ is stationary
and is the output of a non--linear filter $G$ with Gaussian
input.

We shall study the asymptotic behavior of the wavelet scalogram of
$\{Y_t\}_{t\in\mathbb{Z}}$, that is, the average of squares of its wavelet
coefficients. As shown in~\cite{flandrin:1992}, \cite{abry:veitch:1998},
\cite{veitch:abry:1999} and~\cite{bardet:2000T} in a parametric context, the
normalized limit of scalogram can be used to estimate the long memory exponent
$d$ defined in~(\ref{e:sdf0}).

Empirical studies presented in~\cite{abry-helgason-pipiras-2011} consider the problem of estimating $d$
under various types of functions $G$. The argument, consistent with the one
in~\cite{clausel-roueff-taqqu-tudor-2011a}, suggests that at large scales the
wavelet coefficients behavior only depends on the ``Hermite rank'', which is
defined below, of $G$. Moreover the authors develop heuristical arguments to
deduce the asymptotic behavior of wavelet-based regression estimator of $d$.
We provide here a theoretical analysis in a semi--parametric setting for a large class of functions $G$. We will show that,
as $j$ goes to infinity, there is a delicate interplay between the scale
$\gamma_j$ (typically $2^j$) and the number of wavelet coefficients $n_j$ and that
the ``reduction theorem'' (see below) applies only when $\gamma_j$ is much greater than $n_j$.

In the semi--parametric context, the case where the function $G$ is linear was
firstly considered in \cite{moulines:roueff:taqqu:2007:jtsa} and the case where
$G$ is a Hermite polynomial of arbitrary order was studied
in~\cite{clausel-roueff-taqqu-tudor-2011b}. The case where $G(X_t)$ is the
so--called ``Rosenblatt process'' was studied by \cite{bardet:tudor:2010} (see also~\cite{tudor:2013}) and
is somewhat analogous to the one where $G$ is the second Hermite
polynomial. Our goal is to show that for more complicated functions $G$, one
can obtain new types of limits.

We have referred to Hermite polynomials a number of times. This is because they
form a basis for the space of functions $G$ and thus appear
naturally in our setting. Since the function $G$ satisfies $\mathbb{E}[G(X)]=0$
and $\mathbb{E}[G(X)^2]<\infty$ for $X\sim\mathcal{N}(0,1)$, $G(X)$ can be
expanded in Hermite polynomials, that is,
\begin{equation}\label{e:chaos-exp}
G(X)=\sum_{q=1}^{\infty}\frac{c_q}{q!} H_q(X)\;.
\end{equation}
One sometimes refer to~(\ref{e:chaos-exp}) as an expansion in Wiener chaos.
The convergence of the infinite sum~(\ref{e:chaos-exp}) is in
$L^2(\Omega)$,
\begin{equation}\label{e:cq}
c_q=\mathbb{E}[G(X)H_q(X)]\;,\quad q\geq 1\;,
\end{equation}
and
\begin{equation*}
H_{q}(x)= (-1)^{q}e^{\frac{x^{2}}{2}}\frac{d^{q}}{dx^{q}}\left(
e^{-\frac{x^{2}}{2}}\right)\;,
\end{equation*}
are the Hermite polynomials. These  Hermite polynomials satisfy
$H_0(x)=1,H_1(x)=x,H_2(x)=x^2-1$ and one has
\[
\mathbb{E}[H_q(X)H_{q'}(X)]=
\int_\mathbb{R}H_q(x)H_{q'}(x)\frac{1}{\sqrt{2\pi}}\rme^{-x^2/2}\rmd
x=q!\1_{\{q=q'\}}\;.
\]
Observe that the expansion~(\ref{e:chaos-exp}) starts at $q=1$,
since
\begin{equation}\label{e:c0}
c_0=\mathbb{E}[G(X)H_0(X)]=\mathbb{E}[G(X)]=0\;,
\end{equation}
by assumption. Denote by $q_0\geq 1$ the {\it Hermite rank} of
$G$, namely the index of the first non--zero coefficient in the
expansion~(\ref{e:chaos-exp}). Formally, $q_0$ is such that
\begin{equation}
  \label{e:hermiterank}
q_0=\min\{q\geq1,\,c_q\neq 0\}\;.
\end{equation}
One has then
\begin{equation}\label{e:summability}
\sum_{q=q_0}^{+\infty}\frac{c_q^2}{q!}=\mathbb{E}[G(X)^2]<\infty\;.
\end{equation}

We will focus on the wavelet coefficients of the sequence
$\{Y_t\}_{t\in\mathbb{Z}}$ in~(\ref{e:DefY}). Since
$\{Y_{t}\}_{t\in\mathbb{Z}}$ is random so will be its wavelet coefficients
which we denote by $\{W_{j,k},\,j\geq 0,\,k\in\mathbb{Z}\}$, where $j$
indicates the scale index and $k$ the location. These wavelet coefficients are
defined by
\begin{equation}\label{e:WC}
W_{j,k}=\sum_{t\in\mathbb{Z}}h_j(\gamma_j k-t)Y_{t}\;,
\end{equation}
where $\gamma_j\uparrow \infty$ as $j\uparrow \infty$ is a
sequence of non--negative decimation factors applied at scale index $j$, for
example $\gamma_j=2^j$ and $h_{j}$ is a filter whose properties
are listed in Appendix~\ref{s:appendixB}. We follow the
engineering convention where large values of $j$ correspond to
large scales. Our goal is to find the distribution of the
empirical quadratic mean of these wavelet coefficients at large
scales $j\to\infty$, that is, the asymptotic behavior of the
wavelet scalogram
\begin{equation}\label{e:defsnj}
S_{n_j,j}=\frac{1}{n_j}\sum_{k=0}^{n_j-1}W_{j,k}^2
\;,
\end{equation}
adequately centered and normalized as the scale $\gamma_j$ and the number of
wavelets coefficients $n_j$ available at scale index $j$ both tend to infinity.

The reduction theorem of~\cite{taqqu:1975} states that if $G(X_t)$
is long--range dependent then the limit in the sense of
finite--dimensional distributions of $\sum_{k=1}^{[nt]}G(X_k)$
adequately normalized, depends on the first term in the Hermite
expansion of $G$. In other words, there exist normalization
factors $a_n\to\infty$ as $n\to\infty$ such that
\[
\frac{1}{a_n}\sum_{k=1}^{[nt]}G(X_k)\qquad\mbox{ and
}\qquad\frac{1}{a_n}\sum_{k=1}^{[nt]}\frac{c_{q_0}}{q_0!}H_{q_0}(X_k)\;,
\]
have the same non--degenerate limit as $n\to\infty$.

We are interested here, however, in the asymptotic behavior of the wavelet
scalogram $S_{n_j,j}$ in~(\ref{e:defsnj}). We want to find exponents $\alpha>0$
and $\nu>0$ such that as the number of wavelet coefficients $n_j$ and the scale
$\gamma_j$ tend to $\infty$,
\begin{equation}\label{e:pbpose}
\{n_j^{\alpha}\gamma_j^{-\nu}S_{n_{j+u},j+u},u\in\mathbb{Z}\}\;,
\end{equation}
tends, after centering, to a limit in the sense of the finite--dimensional
distributions in the scale increment $u$.  This is a necessary and important step in
developing methods for estimating the underlying long memory parameter.

The limit of the sequence $S_{n_j,j}$ will be related to the so--called Hermite
process. The Hermite process is a self-similar stochastic process, with
stationary increments and long range dependence. The Hermite process of order
$q$ lives in the $q$th Wiener chaos, that is, it can be written as an iterated
multiple integral of order $q$ with respect to white noise. We refer to
Definition~\ref{d:HP} below for the precise representation.

We will see that, in the scalogram setting, the reduction theorem mentioned
above does not always apply. For example if $G(X_t)=H_1(X_t)+H_{q_1}(X_t)$,
$q_1\geq 3$ then the Hermite rank is $q_0=1$. But the limit of the normalized
scalogram is not necessarily the same as that of $H_1(X_t)=X_t$. This is
essentially due to the fact that the scalogram involves squares and, in
addition, depends on two parameters $j$ and $n_j$ which both tend to $\infty$.

In~\cite{clausel-roueff-taqqu-tudor-2011b}, the case
$$
G(X_t)=H_q(X_t), \quad  q\geq 2\;,
$$
was studied and it was shown that in this case the limit is a Rosenblatt
process (see Definition~\ref{d:HP}). In the present paper we study other
classes of functions $G$ for which different Hermite processes appear in the
limit. For example, for the process
$$
G(X_t)=H_1(X_t)+H_{q_1}(X_t), \quad q_1\geq 3\;,
$$
considered above, the limit of~(\ref{e:pbpose}) may be either Gaussian, a
Hermite process of order $q_1-1$ or a Rosenblatt process depending on the
specific circumstances. We will show the existence of a {\it critical index}
$q_1^*$ and of critical exponents $\nu,\nu'$ such that when $q_1<q_1^*$, then~:
\begin{itemize}
\item the limit is Gaussian if $n_j\ll\gamma_j^{\nu}$, \item the
limit is a Hermite process of order $q_1-1$ if $\gamma_j^{\nu}\ll
n_j\ll\gamma_j^{\nu'}$, \item the limit is a Rosenblatt process if
$\gamma_j^{\nu'}\ll n_j$,
\end{itemize}
where $a_j\ll b_j$ means that $a_j=o(b_j)$ as
$j\to\infty$.

We will also study interesting cases where the function $G$ has a Hermite rank
greater than two.

The paper is organized as follows. Long range--dependence and the
multidimensional wavelet scalogram are introduced in
Section~\ref{s:LRD}. The main results are stated and illustrated
in Section~\ref{s:main}. The chaos decomposition of the scalogram
is given in Section~\ref{s:expansion}. The study of the leading
terms is done in Sections~\ref{s:preliminary}
and~\ref{sec:LTSigma}. The proofs of the main theorems are given
in Section~\ref{s:proofsmain} while Section~\ref{sec:techlemma}
contains some technical lemmas. Basic facts about the Wiener chaos
are gathered in Appendix~\ref{s:appendixA} and
Appendix~\ref{s:appendixB} lists the assumptions on the wavelet
filters.

\section{Long--range dependence and the multidimensional wavelet scalogram}\label{s:LRD}

The Gaussian sequence $X=\{X_t\}_{t\in\mathbb{Z}}$ with spectral
density~(\ref{e:sdf}) is long--range dependent because $d>0$ and
hence its spectrum explodes at $\lambda=0$. Whether
$\{H_q(X_t)\}_{t\in\mathbb{Z}}$ is also long--range dependent
depends on the respective values of $q$ and $d$.  We show
in~\cite{clausel-roueff-taqqu-tudor-2011a}, that the spectral density of
$\{H_q(X_t)\}_{t\in\mathbb{Z}}$ behaves like
$|\lambda|^{-2\delta_+(q)}$ as $\lambda\to 0$, where
\begin{equation}\label{e:ldparamq}
\delta_+(q)=\max(\delta(q),0)\quad\text{and}\quad
\delta(q)=qd-(q-1)/2\;.
\end{equation}
Hence $\delta_+(q)$ is the memory parameter of
$\{H_q(X_t)\}_{t\in\mathbb{Z}}$. Therefore, since $0<d<1/2$,
$\{H_q(X_t)\}_{t\in\mathbb{Z}}$, $q\geq 1$, is
long--range dependent if and only if
\begin{equation}\label{e:dq>0}
\delta(q)>0\Longleftrightarrow (1/2)(1-1/q)<d<1/2\;,
\end{equation}
that is, $d$ must be sufficiently close to $1/2$. Specifically,
for long--range dependence,
\begin{equation}
  \label{eq:qd}
q=1\Rightarrow d>0,\quad q=2\Rightarrow d>1/4,\quad q=3\Rightarrow
d>1/3,\quad q=4\Rightarrow d>3/8\;.
\end{equation}
From another perspective,
\begin{equation}\label{e:dqq>0}
\delta(q)>0\Longleftrightarrow 1\leq q<1/(1-2d)\;,
\end{equation}
and thus $\{H_q(X_t)\}_{t\in\mathbb{Z}}$ is short--range dependent
if $q\geq 1/(1-2d)$.

We shall suppose that the Hermite rank of $G$
is $q_0\geq1$, that is the expansion of $G(X_{t})$ starts at $q_0$. We
always assume that $\{H_{q_0}(X_t)\}_{t\in\mathbb{Z}}$ has long
memory, that is,
\begin{equation}\label{e:longmemorycondition}
q_0 < 1/(1-2d) \;.
\end{equation}

The condition~(\ref{e:longmemorycondition}), with $q_0$ defined as the Hermite
rank~(\ref{e:hermiterank}), ensures such that
$\{\Delta^KY\}_{t\in\mathbb{Z}}=\{G(X_t)\}_{t\in\mathbb{Z}}$ is long-range
dependent (see~\cite{clausel-roueff-taqqu-tudor-2011a}, Lemma 4.1). We are
mainly interested in the asymptotic behavior of the scalogram $S_{n_j,j}$,
defined by~(\ref{e:defsnj}) as $n_j\to\infty$ (large sample behavior) and
$j\to\infty$ (large scale behavior). More precisely, we will study the
asymptotic behavior of the sequence
\begin{equation}\label{e:snjm}
\overline{S}_{n_{j+u},j+u}=
S_{n_{j+u},j+u}-\mathbb{E}(S_{n_{j+u},j+u})=\frac{1}{n_{j+u}}\sum_{k=0}^{n_{j+u}-1}\left(W_{j+u,
k}^2-\mathbb{E}(W_{j+u ,k}^{2})\right)\;,
\end{equation}
adequately normalized as $j,n_j\to\infty$.

There are two perspectives. One can consider, as
in~\cite{clausel-roueff-taqqu-tudor-2011a}, that the wavelet coefficients
$W_{j+u,k}$ are processes indexed by $u$ taking a finite number of values. A
second perspective consists in replacing instead the filter $h_{j}$
in~(\ref{e:WC}) by a multidimensional filter $h_{\ell,j}, \ell=1,\cdots,m$ and
thus replacing $W_{j,k}$ in~(\ref{e:WC}) by
$$
W_{\ell,j,k}=\sum_{t\in\mathbb{Z}}h_{\ell,j}(\gamma_j k-t)Y_{t}\;.
$$
We adopted this second perspective in~\cite{clausel-roueff-taqqu-tudor-2011b}
and we also adopt it here since it allows us to compare our results to those
obtained in~\cite{roueff-taqqu-2009} in the Gaussian case.

We use bold faced symbols $\bW_{j,k}$ and $\bh_j$ to
emphasize the multivariate setting and let
\begin{eqnarray*}
\bh_j=\{h_{\ell,j},\,\ell=1,\cdots,m\},\qquad
\bW_{j,k}=\{W_{\ell,j,k},\,\ell=1,\cdots,m\}\;,
\end{eqnarray*}
with
\begin{equation}\label{e:Wjkbold}
\bW_{j,k}=\sum_{t\in\mathbb{Z}}\bh_j(\gamma_j
k-t)Y_t=\sum_{t\in\mathbb{Z}}\bh_j(\gamma_j
k-t)\Delta^{-K}G(X_t),\,j\geq 0,k\in\mathbb{Z}\;.
\end{equation}
We then will study the asymptotic behavior of the sequence
\begin{equation}\label{e:snjbold}
\overline{\mathbf{S}}_{n_j,j}=\frac{1}{n_j}\sum_{k=0}^{n_j-1}\left(\bW_{j,
k}^2-\mathbb{E}[\bW_{j, k}^{2}]\right)\;,
\end{equation}
adequately normalized as $j\to\infty$, where, by convention, in this paper,
\begin{equation}
  \label{eq:conv-square-vector}
\bW_{j, k}^2=
\{W_{\ell,j,k}^2,\,\ell=1,\cdots,m\}\;.
\end{equation}
The squared Euclidean norm of a vector $\bx=[x_1,\dots,x_m]^T$ will be denoted
by $|\bx|^2=x_1^2+\dots+x_m^2$.

It turns out that the asymptotic behavior of $\overline{\bS}_{n_j,j}$ depends
on how the subsequence of Hermite coefficients $c_q,\,q\geq 1$ which are
\emph{non-vanishing} is distributed. We denote this
subsequence by $\{c_{q_\ell}\}_{\ell\in\ellset}$ where $\ellset$ is a sequence
of consecutive integers starting at 0,
\begin{equation}
  \label{eq:L}
\ellset\subseteq\{0,1,2,\dots\}\;,
\end{equation}
with same cardinality as the set of non-vanishing coefficients,
and $(q_\ell)_{\ell\in\ellset}$ is a
(finite of infinite) increasing sequence of integers such that
\begin{eqnarray*}
q_0&=&\mbox{ index of the first non--zero coefficient }c_q,\\
q_{\ell}&=&\mbox{ index of the }(\ell+1)\mbox{th non--zero
coefficient}\,,\quad \ell\geq 1\;.
\end{eqnarray*}

\medskip
\noindent\textbf{Examples}

  \begin{enumerate}[label=\arabic*)]
  \item If
$$
G(X_t)=c_1 H_1(X_t) + \frac{c_3}{3!}H_3(X_t)\;,
$$
where $c_1\neq 0$, $c_2=0,\,c_3\neq 0,\,c_q=0$ for
$q\geq 4$, then $q_0=1,\,q_1=3$ and $\ellset=\{0,1\}$.
\item If
$$
G(X_t)=\frac{c_2}{2!} H_2(X_t) + \frac{c_3}{3!}H_3(X_t)\;
 + \frac{c_4}{4!}H_4(X_t)\;,
$$
where $c_1=0,\,c_2\neq0,\,c_3\neq 0,\,c_4\neq
0,\,c_q=0$ for $q\geq 5$, then
$q_0=2,\,q_1=3,\,q_2=4$ and $\ellset=\{0,1,2\}$.
\item If
$$
G(X_t)=\sum_{q=1}^\infty\frac{c_q}{q!} H_q(X_t)\;,
$$
where
$c_q\neq 0$ for $q\geq 1$ then $q_0=1,\,q_1=2,\dots,$ and $\ellset=\{0,1,2,\cdots\}$.
\item  If
$$
G(X_t)=\frac{c_{q_0}}{q_0!} H_{q_0}(X_t)\;,
$$
where $c_{q_0}\neq 0$ and $c_q=0$ for $q\neq q_0$, then
$\ellset=\{0\}$.
  \end{enumerate}

While $c_0$ is always equal to $0$
(see~(\ref{e:c0})), the assumption~(\ref{e:hermiterank}) ensures
that $c_{q_0}\neq 0$ and hence that $\ellset$ always contains the
index $0$, so that $\ellset$ is never empty. In particular, we may
write
\begin{equation}\label{eq:assLD}
(\Delta^KY)_t=G(X_t)
=\sum_{\ell\in\ellset}\frac{c_{q_\ell}}{q_\ell!}H_{q_\ell}(X_t),\quad
t\in\mathbb{Z}\;,
\end{equation}
where, if $\ellset$ is infinite, the sum converges in the $L^2$
sense.

We set
\begin{align}
  \label{eq:defJ}
  &I=\{\ell\in\ellset~:\ell+1\in\ellset, q_{\ell+1}-q_{\ell}=1\} \;,\\
  \label{eq:defI}
  &J=\{(\ell_1,\ell_2)\in\ellset^2~:~\ell_1<\ell_2,\,q_{\ell_1}\neq 1\mbox{ and }q_{\ell_2}-q_{\ell_1}\geq
  2\}\;,
\end{align}
that is, $q_{\ell}$ and $q_{\ell+1}$ take consecutive values when
$\ell\in I$ and $q_{\ell_1}$ and $q_{\ell_2}$ differ by two or
more when $(\ell_1,\ell_2)\in J$. The structure of these two sets
is particulary important. The set $I$ could be empty (there are no
consecutive values of $q_\ell$) or not empty. Then we set
\begin{equation}\label{e:l0}
\ell_0=  \begin{cases}
\min(I)\geq 0\;,&\text{when $I$ is not empty}\;,    \\
\infty\;,&\text{when $I$ is  empty}\;.
  \end{cases}
\end{equation}
When $\ell_0$ is finite (that is, $I$ is not empty), $q_{\ell_0}$ is the
smallest index $q$ such that two Hermite coefficients $c_q$, $c_{q+1}$ are
non--zero. It will be involved in the normalization. We define, in addition,
\begin{equation}\label{e:m0}
m_0=\min(\{\ell\in\ellset,\,q_{\ell}\geq 3\})\geq 0\;.
\end{equation}
Thus $q_{m_0}$ is the smallest index $q$ such that $c_q$ is
non--zero with $q\geq 3$.

\newpage

\noindent\textbf{Examples}
  \begin{enumerate}[label=\arabic*)]
  \item If
$$
G(X_t)=c_1 H_1(X_t) + \frac{c_2}{2!} H_2(X_t) + \frac{c_4}{4!}H_4(X_t)\;,
$$
where $c_1\neq 0,\,c_2\neq 0,\,c_3= 0,c_4\neq0, c_{q}=0$
for $q\geq 5$
then $\ellset=\{0,1,2\}$,
$I=\{1\}$, $\ell_0=1$, $m_0=4$ and $J=\{(2,4)\}$.
\item If
$$
G(X_t)=\frac{c_2}{2!} H_2(X_t) + \frac{c_3}{3!}H_3(X_t)\;
 + \frac{c_4}{4!}H_4(X_t)\;,
$$
where $c_1=0,\,c_2\neq0,\,c_3\neq 0,\,c_4\neq
0,\,c_q=0$ for $q\geq 5$,
then  $\ellset=\{0,1,2\}$, $I=\{2,3\}$, $\ell_0=2$, $m_0=3$ and $J=\{(2,4)\}$.
\item If
$$
G(X_t)=c_1 H_1(X_t)=c_1 X_t\;,
$$
where $c_1\neq 0$ and
$c_q=0$ for $q\geq 2$, then  $\ellset=\{0\}$ and both $I$ and $J$ are empty.
\end{enumerate}

We are interested in the asymptotic behavior of the normalized scalogram
$\overline{\bS}_{n_j,j}$ defined in~(\ref{e:snjbold}). This behavior depends on
the sets $J$ and $I$. These sets affect both the rate of convergence and the
limit distribution of the rescaled sequence. The limit (see
Section~\ref{s:main}) will be expressed in terms of the Hermite processes which
are defined as follows~:
\begin{definition}\label{d:HP}
The Hermite process of order $q$ and index
\begin{equation}\label{e:qd}
(1/2)(1-1/q)<d<1/2\;,
\end{equation}
is the continuous time process
\begin{equation}\label{e:harmros}
Z_{q,d}(t)= \int_{\mathbb{R}^{q}}^{\prime\prime} \frac{\rme^{\rmi
(u_1+\cdots+u_q)\,t}- 1} {\rmi(u_1+\cdots+u_q)}|u_1\cdots
u_q|^{-d}\; \rmd\widehat{W}(u_1)\cdots
\rmd\widehat{W}(u_q),\,t\in\mathbb{R}\;.
\end{equation}
It is Gaussian and called Fractional Brownian Motion when $q=1$
and $0<d<1/2$. It is non Gaussian and called Rosenblatt process
when $q=2$ and $1/4<d<1/2$. The marginal distribution of
$Z_{q,d}(t)$ at $t=1$ is called the Hermite distribution of index $q$. It is
called a Rosenblatt distribution when $q=2$.
\end{definition}
The multiple integral~(\ref{e:harmros}) is defined in
Appendix~\ref{s:appendixA}. The symbol
$\int_{\mathbb{R}^{q}}^{\prime\prime}$ indicates that one does not
integrate on the diagonal $u_i=u_j$, $j\neq i$. The integral is
well-defined when~(\ref{e:qd}) holds or equivalently when,
\[
1\leq q<1/(1-2d)\;,
\]
because then it has finite $L^2$ norm. This process is
self--similar with self-similarity parameter
\[
H=dq+1-q/2=\delta(q)+1/2\in (1/2,1),
\]
that is for all $a>0$, $\{Z_{q,d}(at)\}_{t\in\mathbb{R}}$ and
$\{a^H Z_{q,d}(t)\}_{t\in\mathbb{R}}$ have the same finite
dimensional distributions, see~\cite{Taq79}.

\section{Main results}\label{s:main}
We shall now state the main results and discuss them. They are
proved in the following sections. We start with the assumptions

\bigskip

\noindent{\bf Assumptions A}
$\{\textbf{W}_{j,k},\,j\geq1,k\in\mathbb{Z}\}$ are the
multidimensional wavelet coefficients defined by~(\ref{e:Wjkbold})
, where
\begin{enumerate}[label=(\roman*)]
\item $\{X_t\}_{t\in\mathbb{Z}}$ is a stationary Gaussian process
with mean $0$, variance $1$ and spectral density $f$
  satisfying~(\ref{e:sdf}).
\item\label{item:LMass} $G$ is a real-valued function whose Hermite
expansion~(\ref{e:chaos-exp})
  satisfies condition~(\ref{e:longmemorycondition}), namely $q_0<1/(1-2d)$, and whose coefficients in Hermite expansion satisfy the following condition : for any $\lambda>0$,
\begin{equation}\label{e:condcv}
c_q=O((q!)^{d} e^{-\lambda q})\quad\mbox{ as }q\to\infty\;.
\end{equation}
\item the wavelet filters $(\bh_j)_{j\geq1}$ and their asymptotic Fourier
  transform $\widehat{\bh}_\infty$ satisfy \ref{ass:w-a}--\ref{ass:w-c} with
  $M$ vanishing moments. See details in Appendix~\ref{s:appendixB}.
\end{enumerate}

We shall focus on the asymptotic behavior of the scalogram for two
basic classes of functions $G$.
\begin{itemize}
\item The first class involves functions $G$ with Hermite rank
greater or equal to $2$ and with two consecutive terms in the
Hermite expansion, both of which having long--range dependence.
The result is stated in Theorem~\ref{th:LD1}.
\item The second
class involves functions $G$ with Hermite rank equal to $1$ with
no two consecutive terms with long--range dependence. The results
are stated in Theorems~\ref{th:LD2} and~\ref{th:LD3}.
\end{itemize}
Other classes are left for future work.

\subsection{$G$ has a Hermite rank greater or equal to
$2$}\label{s:main1} Consider functions $G$ of the form
\[
G(x)=\frac{c_2}{2!} H_2(x)+\cdots+\frac{c_{q_{\ell_0}}}{q_{\ell_0}!}
H_{q_{\ell_0}}(x)+\frac{c_{q_{\ell_0}+1}}{(q_{\ell_0}+1)!}H_{q_{\ell_0}+1}(x)
+\cdots\;.
\]
where $c_1=0$. Some of the $c_q$, $q\geq2$ may be zero as well.
More precisely assume that
\begin{equation}
  \label{eq:q0geq2}
q_0\geq 2\;,
\end{equation}
that is, that the Hermite rank of $G$ is 2 or more. Also assume that (a) there
exists two \emph{consecutive} terms and that (b) both are long range dependent.
Assumption (a) implies that the set $I$ in~(\ref{eq:defJ}) is not empty. Since
the index $q_{\ell_0}$ (see~(\ref{e:l0})) of the first of these two consecutive
terms could be $q_0\geq2$, we have $q_{\ell_0}\geq2$. The index of the second
of these consecutive terms is $q_{\ell_0}+1\geq3$. Assumption (b) will be
satisfied if this second term is long-range dependent, that is
\begin{equation}
  \label{eq:qell0plus1LRD}
q_{\ell_0}+1<1/(1-2d)\;,
\end{equation}
by~(\ref{e:dqq>0}). We note that this situation
implies the following boundaries for the parameter $d$:
$$
1/3<d<1/2 \;,
$$
as indicated in~(\ref{e:qd}).

Set
\begin{equation}\label{e:defnu}
\nu=2q_{\ell_0}+1-2q_0\;.
\end{equation}

The following theorem provides the limit of~(\ref{e:snjbold}) for two different
cases, depending on whether the limit of $n_j^{-1}\gamma_j^{\nu}$ when
$j\to+\infty$ is null or infinite.  It involves $K\geq 0$  defined
in~(\ref{e:DefY}), $q_0$ in~(\ref{e:hermiterank}), $\delta(q)$ is defined
in~(\ref{e:ldparamq}), $\ell_0$ in~(\ref{e:l0}). The integer $M$ is the number
of vanishing moments of the wavelet filters and appears in
Appendix~\ref{s:appendixB}.

\begin{theorem}\label{th:LD1}
  Suppose that Assumptions A hold with  $M\geq K+\delta(q_0)$.
  Suppose moreover that the Hermite expansion of $G$ satisfies~(\ref{eq:q0geq2})
  and~(\ref{eq:qell0plus1LRD}).

  Then two limits in distribution of the centered multidimensional scalogram
  $\overline{\bS}_{n,j}$ in~(\ref{e:snjbold}), suitably normalized, are
  possible. They involve the Hermite processes in Definition~\ref{d:HP}
  evaluated at time $t=1$. The coefficients involve $\ell_0$ and the
  multidimensional deterministic vector $\K_q$, whose entries
  $[L_{q}(\widehat{h}_{\ell,\infty})]_{\ell=1,\cdots,m}$ are defined as
\begin{equation}\label{e:defKp}
L_{q}(\widehat{h}_{\ell,\infty})=
\int_{\mathbb{R}^q}\frac{|\widehat{h}_{\ell,\infty}(u_1+\cdots+u_{q})|^{2}}
{|u_1+\cdots+u_{q}|^{2K}}\; \prod_{i=1}^q |u_i|^{-2d}\; \rmd
u_1\cdots\rmd u_{q} \; ,
\end{equation}
which is finite for any $q<1/(1-2d)$. Then
\begin{enumerate}[label=(\alph*)]
\item\label{it:thm1a} If $n_j\ll\gamma_j^{\nu}$ then, as $j,n_j\to\infty$,
\begin{equation*}
n_j^{1-2d}\gamma _{j}
^{-2(\delta(q_{0})+K)}\overline{\bS}_{n_j,j}
\overset{\tiny{(\mathcal{L})}}{\rightarrow}\frac{c_{q_0}^2}{(q_0-1)!}\left[f^*(0)^{q_0}\K_{q_0-1}\right]
Z_{2,d}(1)\;.
\end{equation*}
\item\label{it:thm1b}

If $\gamma_j^{\nu}\ll n_j$ then, as $j,n_j\to\infty$,
\begin{equation*}
n_j^{(1-2d)/2}\gamma_j^{-(\delta(q_{\ell_0})+\delta(q_{\ell_0}+1)+2K)}
\overline{\bS}_{n_j,j}
\overset{\tiny{(\mathcal{L})}}{\rightarrow}2\frac{c_{q_{\ell_0}}c_{q_{\ell_0}+1}}{q_{\ell_0}!}\left[f^*(0)^{q_{\ell_0}+1/2}\; \K_{q_{\ell_0}}
\; \right]Z_{1,d}(1)\;.
\end{equation*}
\end{enumerate}
\end{theorem}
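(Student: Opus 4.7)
The plan is to expand the centered scalogram into a sum of multiple Wiener–It\^o integrals of increasing chaos order, identify the dominant contribution in each of the two regimes, and then pass to the limit in that dominant term while controlling the remainder. Starting from the Hermite expansion~(\ref{eq:assLD}), each component of the multivariate wavelet coefficient admits the chaos decomposition
\[
W_{\ell,j,k}=\sum_{\ell'\in\ellset}\frac{c_{q_{\ell'}}}{q_{\ell'}!}\,\hat{I}_{q_{\ell'}}\!\bigl(f_{\ell,j,k}^{(q_{\ell'})}\bigr),
\]
where $\hat{I}_q$ is the $q$-fold multiple integral with respect to the spectral Gaussian measure of $X$ and $f_{\ell,j,k}^{(q)}$ is the spectral kernel obtained by combining the wavelet filter $\widehat{h}_{\ell,j}$, the factor $|1-\rme^{-\rmi\lambda}|^{-K}$ from $\Delta^{-K}$, and the square root of $f$. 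Applying the product formula to $W_{\ell,j,k}^2$ and removing the mean yields
\[
W_{\ell,j,k}^{2}-\mathbb{E}[W_{\ell,j,k}^{2}]=\sum_{(q,q',r)\in T}\frac{c_{q}c_{q'}}{q!\,q'!}\,r!\binom{q}{r}\binom{q'}{r}\hat{I}_{q+q'-2r}\bigl(f_{\ell,j,k}^{(q)}\otimes_{r}f_{\ell,j,k}^{(q')}\bigr),
\]
where $T$ collects triples with $q,q'$ in the non-vanishing index set, $0\le r\le\min(q,q')$, and the diagonal $(q,q,q)$ excluded (these are the constant terms cancelled by centering). Averaging over $k=0,\dots,n_j-1$ then splits $\overline{\bS}_{n_j,j}$ into chaos components of orders $p=q+q'-2r\ge 1$.

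Next, a spectral-domain magnitude analysis in the spirit of \cite{clausel-roueff-taqqu-tudor-2011b} shows that the $L^2$ norm of the $(q,q',r)$-term in $n_j^{-1}\sum_k\cdots$ factorizes into a $\gamma_j$-power governed by $\delta(q)+\delta(q')+2K$ and by the surviving chaos order $p=q+q'-2r$, times an $n_j$-power whose value is dictated by whether the stationary $p$-chaos sum obtained after summing in $k$ is itself long-range dependent. Matching these exponents produces two candidates: in regime~(a) $n_j\ll\gamma_j^{\nu}$, the leading term is the diagonal triple $(q_0,q_0,q_0-1)$, which projects onto the second chaos; in regime~(b) $\gamma_j^{\nu}\ll n_j$, the leading term is the off-diagonal consecutive triple $(q_{\ell_0},q_{\ell_0}+1,q_{\ell_0})$, which projects onto the first chaos. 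The crossover occurs exactly at $n_j\asymp\gamma_j^{\nu}$, and this is what fixes the definition~(\ref{e:defnu}) of $\nu$.

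To identify the limit of the leading term, I would rescale the frequency variables by $\gamma_j$ inside the surviving multiple integral and use the local behaviour $f(\lambda)\sim f^*(0)|\lambda|^{-2d}$ near $\lambda=0$ together with the convergence of $\widehat{\bh}_j$ to $\widehat{\bh}_\infty$ given by \ref{ass:w-a}--\ref{ass:w-c}. In regime~(a), integrating out the $q_0-1$ contracted variables produces the factor $f^*(0)^{q_0}\K_{q_0-1}$; the combinatorial coefficient $(q_0-1)!\binom{q_0}{q_0-1}^{2}$ combines with $(c_{q_0}/q_0!)^{2}$ to give $c_{q_0}^{2}/(q_0-1)!$, and the two remaining frequency variables assemble into the spectral kernel of $Z_{2,d}(1)$. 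In regime~(b), an analogous computation with $q_{\ell_0}$ contractions yields the prefactor $f^*(0)^{q_{\ell_0}+1/2}\K_{q_{\ell_0}}$ and a first-chaos (Gaussian) integral equal in law to $Z_{1,d}(1)$; the two symmetric off-diagonal pairs $(q_{\ell_0},q_{\ell_0}+1)$ and $(q_{\ell_0}+1,q_{\ell_0})$ supply the factor $2$ in front of $c_{q_{\ell_0}}c_{q_{\ell_0}+1}/q_{\ell_0}!$. The convergence in distribution of each leading chaos integral is then delivered by the machinery of Sections~\ref{s:preliminary} and~\ref{sec:LTSigma}.

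The main obstacle I anticipate is the control of the non-leading terms. One must establish $L^{2}$ bounds on each contraction $f_{\ell,j,k}^{(q)}\otimes_{r}f_{\ell',j,k'}^{(q')}$ that are sharp enough, and uniform enough in $(q,q',r)$, for the infinite double sum over $T$ to still vanish at the stated normalization. When $\ellset$ is infinite, this means absorbing the combinatorial growth of $r!\binom{q}{r}\binom{q'}{r}$ and of the iterated convolutions of $|\lambda|^{-2d}$ that proliferate as $q$ and $q'$ grow; the decay assumption~(\ref{e:condcv}) on $c_q$ is calibrated exactly for this purpose. A more subtle point is that in regime~(b) the triple $(q_0,q_0,q_0-1)$ that drives regime~(a) is still present and must be shown to be \emph{strictly} subdominant, not merely of the same order; this is where the strong separation $n_j\gg\gamma_j^{\nu}$ enters quantitatively, and checking it cleanly for every remaining $(q,q',r)$ is the most delicate part of the argument.
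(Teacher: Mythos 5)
Your proposal is correct and follows essentially the same route as the paper: the product-formula chaos expansion of $\overline{\bS}_{n_j,j}$, identification of the leading terms $\mathbf{S}_{n_j,j}^{(q_0,q_0,q_0-1)}$ (second chaos, Rosenblatt) and $\mathbf{S}_{n_j,j}^{(q_{\ell_0},q_{\ell_0}+1,q_{\ell_0})}$ (first chaos, Gaussian) with crossover at $n_j\asymp\gamma_j^{\nu}$, the rescaling argument giving $f^*(0)$ and $\K_q$ in the limit, and the use of~(\ref{e:condcv}) to sum the $L^2$ bounds over the infinite set of remainder triples. The combinatorial coefficients you compute also match those in the statement.
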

\newpage
\noindent{\bf Remarks.}
\begin{enumerate}[label=\arabic*.]
\item Using~(\ref{EqMajoHinf}) with $M\geq K$ and $\alpha>1/2$, the integral
  in~(\ref{e:defKp}) is finite for any positive integer $q<1/(1-2d)$, see
  Lemma~5.1 in~\cite{clausel-roueff-taqqu-tudor-2011a}. Thus, under
  Conditions~(\ref{eq:q0geq2}) and~(\ref{eq:qell0plus1LRD}), the vectors
  $\K_{q_{0}-1}$ and $\K_{q_{\ell_0}}$ appearing in the limits of
  Cases~\ref{it:thm1a} and~\ref{it:thm1b} have finite entries.
\item In case~\ref{it:thm1a}, the limit is a deterministic vector times the
non-Gaussian Rosenblatt random variable $Z_{2,d}(1)$, that is, the Rosenblatt
process  $Z_{2,d}(t)$ defined in~(\ref{e:harmros}) and evaluated at time $t=1$.
In case~\ref{it:thm1b}, the limit is a deterministic vector times the Gaussian
random variable  $Z_{1,d}(1)$, that is, Fractional Brownian motion
  $Z_{1,d}(t)$ defined in~(\ref{e:harmros}) and evaluated at time $t=1$.
\item  In the case where $n_j\sim C_0\gamma_j^{\nu}$ as $j\to\infty$ for
some $C_0>0$, the scalogram is asymptotically a linear combination
of a Rosenblatt and a Gaussian variable. Indeed, using the results
of Section~\ref{sec:LTSigma}, one can see that the scalogram is
the sum of two terms having the same order, both converging in the
$L^2$ sense respectively to a Rosenblatt and a Gaussian variable.
\end{enumerate}
\begin{proof}
This theorem is proved in Section~\ref{s:proofmain1}.
\end{proof}

In the framework of wavelet analysis as
in~\cite{moulines:roueff:taqqu:2007:jtsa}, we have $\gamma_j=2^j$ and the
number $n=n_j$
of wavelet coefficients available at scale index $j$, is related both to the number
$N$ of observations $Y_1,\cdots,Y_N$ of the time series $Y$ and to the length
$T$ of the support of the analyzing wavelet. More precisely, one has
(see~\cite{moulines:roueff:taqqu:2007:jtsa} for more details),
\begin{equation}\label{e:nj}
n_j=[2^{-j}(N-T+1)-T+1]= 2^{-j}N+O(1)\;,
\end{equation}
where $[x]$ denotes the integer part of $x$ for any real $x$.
Note that the assumption $n_j\to\infty$ when $j\to\infty$
is equivalent to $N\to\infty$ faster than $2^j$. Moreover, for any $\nu>0$,
\begin{equation}\label{e:cjnMRA}
n_j\ll 2^{j\nu}\Longleftrightarrow\, 2^{-j}N\ll
2^{j\nu}\Longleftrightarrow\,N\ll 2^{j(\nu+1)}\mbox{ when
}N\to\infty\;.
\end{equation}
\noindent{\bf Examples.} We now illustrate Theorem~\ref{th:LD1}
through three examples :
\begin{enumerate}[label=(\roman*)]
\item\label{item:exple1-thm31}
 $G=H_{q_0}$ with $q_0\geq 2$.
\item \label{item:exple2-thm31}
$G=H_{q_0}+H_{q_0+1}$ with $q_0\geq 2$, $q_0+1<1/(1-2d)$.
\item \label{item:exple3-thm31}
$G=H_{q_0}+H_{q_0+1}+H_{q_1}$ with $q_0\geq 2$,
$q_0+1<1/(1- 2d)$
 and with $q_1-(q_0+1)\geq 2$, that is, $J=\{(q_0+1,q_1)\}$.
\end{enumerate}
In all cases, the integer $q_0$ denotes the
 Hermite rank of $G$.

Let us elaborate on the conditions on $d$ and the resulting limits for these
examples. For simplicity, we assume that the scalogram $S_{n_j,j}$ is
univariate.

\medskip

\noindent\textbf{Example~\ref{item:exple1-thm31}}.
When $G=H_{q_0}$ with $q_0\geq 2$, $I$ and $J$ are both empty. Since $I$ is
empty one can regard $\ell_0$ and consequently $q_{\ell_0}$ and $\nu$ as
infinity, which suggests that we are in case~\ref{it:thm1a}, independently of
the growths of $n_j$ versus $\gamma_j$ as $j\to\infty$.  The asymptotic
behavior of the scalogram of this example is treated by Theorem~3.1
in~\cite{clausel-roueff-taqqu-tudor-2011b} under the condition $q_0<1/(1-2d)$.
Indeed, the obtained rate of convergence is the same as in case (a) of
Theorem~\ref{th:LD1} and the limit is also Rosenblatt.  This also corresponds
to the limit obtained by Bardet and Tudor in the case where $Y$ itself is the
Rosenblatt process (see Theorem 4 of~\cite{bardet:tudor:2010}).

\medskip

\noindent\textbf{Example~\ref{item:exple2-thm31}}.
Suppose $G=H_{q_0}+H_{q_0+1}$, with $q_0\geq 2$ and $q_0+1<1/(1-2d)$. Then $J$
is empty and $I=\{q_0\}$. The Hermite rank of $G$ is $q_0$ and thus coincides
with $q_{\ell_0}$. As a consequence, by~(\ref{e:defnu}), $\nu=1$.
Let us use Eq.~(\ref{e:cjnMRA}) to relate the asymptotic
behavior to the number of observation $N$ and the analyzing scale index
$j$. Since $\nu=1$, we get that the asymptotic behavior of the scalogram
$S_{n_j,j}$ depends on whether, as $j,N\to\infty$,
\[
N\ll 2^{2j}\mbox{ or if }2^{2j}\ll N\;.
\]
Let us explain how these two regimes show up in the limit.  The wavelet
coefficients of $Y$ can be expanded as follows
\[
W_{j,k}=W_{j,k}^{(q_0)}+W_{j,k}^{(q_0+1)}\;,
\]
where $W_{j,k}^{(q_0)},W_{j,k}^{(q_0+1)}$ belong respectively to
the chaos of order $q_0$ and $q_0+1$. Then,
\[
W_{j,k}^2=\left([W_{j,k}^{(q_0)}]^2+[W_{j,k}^{(q_0+1)}]^2\right)+\left(2W_{j,k}^{(q_0)}W_{j,k}^{(q_0+1)}\right)\;.
\]
The term $[W_{j,k}^{(q_0)}]^2$ behaves as in the case $G=H_{q_0}$ and is
asymptotically Rosenblatt as proved
in~\cite{clausel-roueff-taqqu-tudor-2011b}. The term $[W_{j,k}^{(q_0+1)}]^2$ is
asymptotically negligible as proved in Proposition~\ref{pro:SD}. The term
$W_{j,k}^{(q_0)}W_{j,k}^{(q_0+1)}$, on the other hand, turns out to be
asymptotically Gaussian. The asymptotic behavior of the scalogram then depends
on whether the Rosenblatt term or the Gaussian term is leading. This depends on
the limit of $N/2^{2j}$.  Hence, both limits stated in Theorem~\ref{th:LD1}
may occur:
\begin{enumerate}[label=$\bullet$]
\item If $2^{-2j}N\to0$, the term corresponding
to $[W_{j,k}^{(q_0)}]^2$ is leading and the scalogram $S_{n_j,j}$
of $Y$ is asymptotically Rosenblatt.
\item If $2^{-2j}N\to\infty$, the terms
corresponding to $W_{j,k}^{(q_0)}W_{j,k}^{(q_0+1)}$ are leading
and the
scalogram $S_{n_j,j}$ of $Y$ is asymptotically Gaussian.
\end{enumerate}

\medskip

\noindent\textbf{Example~\ref{item:exple3-thm31}}.
Suppose $G=H_{q_0}+H_{q_0+1}+H_{q_1}$ with $q_0\geq 2$, $q_0+1<1/(1-2d)$ and
$q_1-(q_0+1)\geq 2$. Then $I=\{q_0\}$, $J=\{(q_0,q_1),(q_0+1,q_1)\}$. Observe
that in this case, $J$ is not involved in the limit of $S_{n_j,j}$ and the
behavior of the scalogram is similar to that of
Example~\ref{item:exple2-thm31}. Thus, the two limits of
Theorem~\ref{th:LD1} may occur.

\subsection{The Hermite rank of $G$ equals $1$}\label{s:main23}

 Here we assume that
 \begin{equation}
   \label{eq:hyp-hermite-rank1}
q_0=1,\; q_1<1/(1-2d)  \quad\text{and}\quad \ell_0=\infty\;.
 \end{equation}
 In particular, $\ell_0=\infty$ implies $q_1\geq q_0+2=3$ and thus
 this condition implies $d>1/3$, thus $d\in(1/3,1/2)$.
 By definition of $\ell_0$ in~(\ref{e:l0}), the last condition
 in~(\ref{eq:hyp-hermite-rank1}) means that there are no terms with consecutive
 indices in the Hermite expansion. Thus
\[
G=c_1
H_1+\frac{c_{q_1}}{q_1!}H_{q_1}+\frac{c_{q_2}}{q_2!}H_{q_2}+\cdots
\]
where for any $\ell\in\ellset$, $q_{\ell+1}-q_{\ell}\geq 2$. In this case the
following critical index plays an important role~:
\begin{equation}\label{e:q1}
q_1^*=2+\frac{1}{2(1-2d)}\;.
\end{equation}
It will also be useful to relate the number of available wavelet coefficients
$n=n_j$ to $\gamma_j^{\nu}$ where $\nu$ takes the following three values~:
\begin{equation}\label{e:defbeta}
\nu_1=\frac{(1-2d)(q_1-1)}{1-(1-2d)(q_1-1)},\,\nu_2=\frac{1-2d}{2d-1/2}(q_1-1),\,
\nu_3=
\begin{cases}
\frac{q_1-1}{q_1-3}&\text{if $q_1>3$}\\
\infty&\text{if $q_1=3$}\;.
\end{cases}
\end{equation}
As shown in the following lemma, the relations between
$\nu_1,\nu_2$ and $\nu_3$ depend on whether $q_1< q_1^*$ or
$q_1\geq q_1^*$~:
\begin{lemma}\label{lem:compexp}
  \
\begin{enumerate}[label=$\bullet$]
\item If $q_1< q_1^*$ then $\nu_1< \nu_2<\nu_3$.
\item If
$q_1\geq q_1^*$ then $\nu_3\leq \nu_2\leq\nu_1$ (in particular we have $\nu_3<\infty$).
\end{enumerate}
\end{lemma}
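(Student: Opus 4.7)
The plan is to reduce everything to a single inequality in a convenient variable and then read off the three chains of inequalities. Set $a=1-2d$, so that $a\in(0,1/3)$ (since $d\in(1/3,1/2)$), and rewrite
\[
\nu_1=\frac{a(q_1-1)}{1-a(q_1-1)},\qquad
\nu_2=\frac{a(q_1-1)}{1/2-a},\qquad
\nu_3=\frac{q_1-1}{q_1-3}\text{ (if }q_1>3\text{)},
\]
with $q_1^*=2+1/(2a)$. Note that under the standing assumption $q_1<1/(1-2d)=1/a$ one has $a(q_1-1)<1-a<1$, so the denominator in $\nu_1$ is positive, and $1/2-a>1/6>0$, so all three quantities are positive.

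The key computation is the comparison of consecutive exponents. First I form the ratio
\[
\frac{\nu_1}{\nu_2}=\frac{1/2-a}{1-a(q_1-1)},
\]
which is strictly less than $1$ iff $1/2-a<1-a(q_1-1)$, iff $a(q_1-2)<1/2$, iff $q_1<q_1^*$; the corresponding statement with $<$ replaced by $=$ or $>$ gives $\nu_1=\nu_2$ when $q_1=q_1^*$ and $\nu_1>\nu_2$ when $q_1>q_1^*$. Second, assuming $q_1>3$, I form
\[
\frac{\nu_2}{\nu_3}=\frac{a(q_1-3)}{1/2-a},
\]
which is strictly less than $1$ iff $a(q_1-3)<1/2-a$, iff $a(q_1-2)<1/2$, iff $q_1<q_1^*$, with the same dichotomy at equality and in the reverse inequality. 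It is a pleasant coincidence (and the reason the lemma is clean) that the two comparisons are governed by the \emph{same} threshold $q_1=q_1^*$.

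It then only remains to handle the edge case $q_1=3$, where $\nu_3=\infty$ by definition: since $q_1=3$ forces $a<1/2$ (trivially true) and $q_1^*=2+1/(2a)>3$, we have $q_1<q_1^*$, and then $\nu_1<\nu_2<\infty=\nu_3$ is immediate from the first comparison above, so this case fits into the first bullet of the lemma without further work. Combining the two ratio computations with this boundary check yields $\nu_1<\nu_2<\nu_3$ when $q_1<q_1^*$ and $\nu_3\le\nu_2\le\nu_1$ (with $\nu_3<\infty$ because then necessarily $q_1>3$) when $q_1\ge q_1^*$, as claimed. There is no genuine obstacle here: the proof is a short algebraic manipulation, and the only point requiring minor care is verifying positivity of the denominators under the assumption $q_1<1/(1-2d)$.
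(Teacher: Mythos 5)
Your proof is correct and follows essentially the same route as the paper: compare $\nu_1$ with $\nu_2$ and $\nu_2$ with $\nu_3$ directly, observe that both comparisons reduce to the single inequality $(1-2d)(q_1-2)<1/2$, i.e.\ $q_1<q_1^*$, and dispose of the case $q_1=3$ by noting $q_1^*>3$. The substitution $a=1-2d$ and the use of ratios rather than cross-multiplication are only cosmetic differences.
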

\begin{proof}
First observe that
\begin{eqnarray*}
\nu_1=\frac{(1-2d)(q_1-1)}{1-(1-2d)(q_1-1)}< \nu_2=\frac{(1-2d)(q_1-1)}{2d-1/2}
&\Longleftrightarrow& 2d-\frac{1}{2}< 1-(1-2d)(q_1-1)\\
&\Longleftrightarrow& q_1< 1+\frac{1-(2d-\frac{1}{2})}{1-2d}=q_1^*\;.
\end{eqnarray*}
Now, if $q_1>3$ then
\begin{eqnarray*}
\nu_2=\frac{(1-2d)(q_1-1)}{2d-1/2}< \nu_3=\frac{q_1-1}{q_1-3}
\Longleftrightarrow q_1-3< \frac{2d-\frac{1}{2}}{1-2d}
\Longleftrightarrow q_1< 3+\frac{2d-\frac{1}{2}}{1-2d}=q_1^*\;,
\end{eqnarray*}
and, since $q_1^*>3$, the case $q_1=3$ can only happen for $q_1< q_1^*$, and yields
$\nu_2<\infty=\nu_3$.
\end{proof}
The next theorems indicate the limits in the various cases. We first consider the case where $q_1$ is lower than the critical index $q_1^*$.
\begin{theorem}\label{th:LD2}
Suppose that Assumptions A hold with $M\geq K+d$.
  Suppose moreover that the Hermite expansion of $G$
  satisfies~(\ref{eq:hyp-hermite-rank1}) and assume that $q_1< q_{1}^*$, where
  $q_{1}^*$ is defined in~(\ref{e:q1}).

Then three limits of  the
multidimensional scalogram $\overline{\bS}_{n,j}$
in~(\ref{e:snjbold}), suitably normalized, are possible~:
\begin{enumerate}[label=(\alph*)]
\item \label{item:LD23}
If $n_j\ll\gamma_j^{\nu_1}$ then as $j, n_j\to\infty$,
\begin{equation*}
n_j^{1/2}\gamma_j^{-(2d+2K)}
\overline{\bS}_{n_j,j}\overset{\tiny{(\mathcal{L})}}{\rightarrow}
\;c_1^2\;\mathcal{N}(0,\Gamma)\;,
\end{equation*}
where $\Gamma$ is defined as
\begin{equation}\label{e:Gamma}
\Gamma_{i,i'} = 4\pi(f^*(0))^2\int_{-\pi}^{\pi}\left|\sum_{p\in\mathbb{Z}}|\lambda+2p\pi|^{-2(K+d)}[\widehat{h}_{i,\infty}\overline{\widehat{h}}_{i',\infty}](\lambda+2p\pi)\right|^2\rmd \lambda,\quad 1\leq i,i'\leq m\;.
\end{equation}
\item  \label{item:LD22}
If $\gamma_j^{\nu_1}\ll n_j$ and either $\nu_3=\infty$ or $n_j\ll\gamma_j^{\nu_3}$ then as
$j,n_j\to\infty$
\begin{equation*}
  n_j^{(1-2\delta(q_1-1))/2}\gamma_j^{-(2\delta(\frac{q_1+1}{2})+2K)}\overline{\bS}_{n_j,j}
  \overset{\tiny{(\mathcal{L})}}{\rightarrow}\frac{2 c_1 c_{q_1}}{(q_1-1)!}[f^{*}(0)]^{(q_1+1)/2}\K_{1}Z_{q_1-1,d}(1)\;.
\end{equation*}
\item \label{item:LD21}
If $\nu_3<\infty$ and $\gamma_j^{\nu_3}\ll n_j$ then as
$j,n_j\to\infty$,
\begin{equation*}
n_j^{1-2d} \gamma _{j}
^{-2(\delta(q_{1})+K)}\overline{\bS}_{n_j,j}
\overset{\tiny{(\mathcal{L})}}{\rightarrow}\frac{c_{q_1}^2}{(q_1-1)!}\left[f^*(0)\right]^{q_1}\;
\K_{q_1-1}\;
Z_{2,d}(1)\;.
\end{equation*}
\end{enumerate}
\end{theorem}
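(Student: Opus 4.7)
The plan is to extend the method used for Theorem~\ref{th:LD1} to the setting of a Hermite rank $q_0=1$ with a gap before the next non--zero coefficient. Using the chaos decomposition developed in Section~\ref{s:expansion} and the estimates of Sections~\ref{s:preliminary} and~\ref{sec:LTSigma}, the proof will amount to identifying the term which dominates in each of the regimes (a), (b), (c) and showing that all other contributions are negligible. Concretely, using the Hermite expansion of $G$ and the linearity of the wavelet filter, I write $\bW_{j,k}=\sum_{\ell\in\ellset}\frac{c_{q_\ell}}{q_\ell!}\bW_{j,k}^{(q_\ell)}$, where the component $\bW_{j,k}^{(q_\ell)}$ lies in the $q_\ell$-th Wiener chaos. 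Squaring and centering yields
\begin{equation*}
\bW_{j,k}^2-\mathbb{E}[\bW_{j,k}^2]=\sum_{(\ell_1,\ell_2)\in\ellset^2}\frac{c_{q_{\ell_1}}c_{q_{\ell_2}}}{q_{\ell_1}!\,q_{\ell_2}!}\Bigl(\bW_{j,k}^{(q_{\ell_1})}\bW_{j,k}^{(q_{\ell_2})}-\mathbb{E}\bigl[\bW_{j,k}^{(q_{\ell_1})}\bW_{j,k}^{(q_{\ell_2})}\bigr]\Bigr).
\end{equation*}
Under~(\ref{eq:hyp-hermite-rank1}), $q_0=1$ and $q_{\ell+1}-q_\ell\geq 2$, so the three candidate leading pairs are $(\ell_1,\ell_2)=(0,0)$, $(0,1)$ (together with its symmetric $(1,0)$) and $(1,1)$, producing respectively a Gaussian, a Hermite-$(q_1-1)$, and a Rosenblatt-type contribution.

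Second, I would treat each of these three terms separately. The $(0,0)$-term $[\bW_{j,k}^{(1)}]^2-\mathbb{E}[\cdot]$ lies in the second Wiener chaos; because $M\geq K+d$, the spectral density of this second-chaos sequence is integrable at the origin and a central limit theorem for averages of second-chaos stationary variables gives, after normalization by $n_j^{1/2}\gamma_j^{-(2d+2K)}$, the Gaussian limit $c_1^2\,\mathcal{N}(0,\Gamma)$, with $\Gamma$ identified through Parseval, giving exactly~(\ref{e:Gamma}). The $(0,1)$-term involves the product $\bW_{j,k}^{(1)}\bW_{j,k}^{(q_1)}$; by the product formula for multiple Wiener-It\^o integrals it decomposes into a chaos of order $q_1-1$ (one contraction) and one of order $q_1+1$ (no contraction). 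The chaos-$(q_1-1)$ part carries the long-memory behavior, and by a standard convergence result in a fixed chaos (with an explicit kernel that factorizes into a product involving $\widehat{h}_{\ell,\infty}$ and the spectral density of $H_{q_1-1}(X)$) it yields $Z_{q_1-1,d}(1)$ after the appropriate normalization. The $(1,1)$-term is of the same form as the one treated in~\cite{clausel-roueff-taqqu-tudor-2011b} for $G=H_{q_1}$ and yields the Rosenblatt limit $Z_{2,d}(1)$ under the normalization $n_j^{1-2d}\gamma_j^{-2(\delta(q_1)+K)}$.

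Third, the thresholds $\nu_1$ and $\nu_3$ arise by pairwise equating the orders of magnitude of the three candidate limits: $\nu_1$ is the crossing value of $\log n_j/\log\gamma_j$ at which the Gaussian $(0,0)$-term and the Hermite $(0,1)$-term have the same size, while $\nu_3$ (equal to $\infty$ when $q_1=3$) is the analogous crossing between the Hermite $(0,1)$-term and the Rosenblatt $(1,1)$-term. Under the assumption $q_1<q_1^*$, Lemma~\ref{lem:compexp} guarantees $\nu_1<\nu_3$, so these two thresholds cleanly split the parameter range into the three regimes (a), (b), (c) stated in the theorem, and in each regime exactly one of the three terms dominates.

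Finally, I would show that all remaining terms $(\ell_1,\ell_2)$ with $\max(\ell_1,\ell_2)\geq 2$ are negligible in each regime. This is the main technical obstacle since $\ellset$ may be infinite. The argument combines $L^2$ estimates of each summand, obtained by iterating contractions and the Wiener-It\^o isometry, with the decay assumption~(\ref{e:condcv}) on $c_q$ and the spectral bounds on $H_q(X_t)$ from~\cite{clausel-roueff-taqqu-tudor-2011a}; these should be a direct application of the technical lemmas of Sections~\ref{s:preliminary} and~\ref{sec:techlemma}. The condition $q_1<q_1^*$ is essential here: it places the powers of $n_j$ and $\gamma_j$ carried by the off-diagonal terms $(\ell_1,\ell_2)$ with $\ell_2\geq 2$ strictly below those of the three leading terms in all three regimes. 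Combining these negligibility estimates with the limits established for the three leading terms via Slutsky's lemma concludes the proof.
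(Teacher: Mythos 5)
Your proposal follows essentially the same route as the paper: decompose $\overline{\bS}_{n_j,j}$ into Wiener chaoses, identify the three leading contributions coming from the pairs $(1,1)$, $(1,q_1)$ and $(q_1,q_1)$ (the paper's $\mathbf{S}_{n_j,j}^{(1,1,0)}$, $\mathbf{\Sigma}_{n_j,j}^{(2)}$ and $\mathbf{\Sigma}_{n_j,j}^{(0)}+\mathbf{\Sigma}_{n_j,j}^{(1)}$, handled by Propositions~\ref{pro:Snj110},~\ref{pro:Sigma2} and~\ref{pro:Sigma0}), kill the remaining terms by $L^2$ bounds combined with~(\ref{e:condcv}), and conclude by pairwise rate comparison at the thresholds $\nu_1,\nu_2,\nu_3$. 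One small correction: the hypothesis $q_1<q_1^*$ is not what makes the higher-order remainder terms negligible (that holds in Theorem~\ref{th:LD3} as well); it enters only through Lemma~\ref{lem:compexp}, which orders the thresholds as $\nu_1<\nu_2<\nu_3$ so that the three regimes are consistent and each is realized.
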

\begin{remark}\label{rem:thLD2}
In case~\ref{item:LD21}, the limit is a deterministic vector times the
non-Gaussian Rosenblatt random variable $Z_{2,d}(1)$. In case~\ref{item:LD22},
the limit is a deterministic vector times a Hermite random variable of order
$q_1-1>3-1=2$, which can be represented by a multiple Wiener integral of order
3 or more (see Definition~\ref{d:HP}).

In the case where $n_j\sim C_0\gamma_j^{\nu_3}$ as $j\to\infty$
for some $C_0>0$, the scalogram is asymptotically a linear
combination of a Rosenblatt and a Hermite random variable. This is
because, up to an equality in distribution, it is the sum of two terms both converging in $L^2$ after
normalization (see Section~\ref{sec:LTSigma}). On the other hand
if $n_j\sim C_0\gamma_j^{\nu_1}$ as $j\to\infty$ for some $C_0>0$,
the situation is complicated. This is because the scalogram is the
sum of two terms of same order, one converging in $L^2$ to a
Hermite random variable, the other converging only in law to a
Gaussian random variable.
\end{remark}
\begin{proof} This theorem is proved in Section~\ref{s:proofmain23}.
\end{proof}
We now consider the case where $q_1$ is greater than the critical
exponent $q_1^*$.
\begin{theorem}\label{th:LD3}
  Suppose that Assumptions A hold with $M\geq K+d$.  Suppose moreover that the
  Hermite expansion of $G$ satisfies~(\ref{eq:hyp-hermite-rank1}) and assume
  that $q_1\geq q_{1}^*$, where $q_{1}^*$ is defined
  in~(\ref{e:q1}).

  Then two limits of the multidimensional scalogram $\overline{\bS}_{n,j}$
  in~(\ref{e:snjbold}), suitably normalized, are possible~:
\begin{enumerate}[label=(\alph*)]
\item \label{item:LD31}
If $n_j\ll\gamma_j^{\nu_2}$ then as $j,n_j\to\infty$,
\begin{equation*}
n_j^{1/2}\gamma_j^{-(2d+2K)} \overline{\bS}_{n_j,j}
\overset{\tiny{(\mathcal{L})}}{\rightarrow}\;c_1^2\mathcal{N}(0,\Gamma)\;,
\end{equation*}
where $\Gamma$ is as in Theorem~\ref{th:LD2}~(a).
\item  \label{item:LD32}
If $\gamma_j^{\nu_2}\ll n_j$ then as $j,n_j\to\infty$,
\begin{equation*}
 n_j^{1-2d} \gamma _{j} ^{-2(\delta(q_{1})+K)}  \overline{\bS}_{n,j}
\overset{\tiny{(\mathcal{L})}}{\rightarrow}\frac{c_{q_1}^2}{(q_1-1)!}\left[f^*(0)\right]^{q_1}
\K_{q_1-1}\;
Z_{2,d}(1)\;.
\end{equation*}
\end{enumerate}
\end{theorem}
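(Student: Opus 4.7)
The plan is to follow the chaos-decomposition strategy already used for Theorems~\ref{th:LD1} and~\ref{th:LD2}. First I would write
\[
\bW_{j,k}^2-\mathbb{E}[\bW_{j,k}^2]
=\sum_{\ell\in\ellset}\Bigl((\bW_{j,k}^{(q_\ell)})^2-\mathbb{E}[(\bW_{j,k}^{(q_\ell)})^2]\Bigr)
+2\sum_{\ell_1<\ell_2}\bW_{j,k}^{(q_{\ell_1})}\bW_{j,k}^{(q_{\ell_2})}\;,
\]
where $\bW_{j,k}^{(q)}$ denotes the projection of $\bW_{j,k}$ onto the $q$th Wiener chaos, and expand each product via the multiplication formula of Appendix~\ref{s:appendixA}, regrouping $\overline{\bS}_{n_j,j}$ as a sum of multiple Wiener--It\^o integrals sorted by chaos order. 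Under~(\ref{eq:hyp-hermite-rank1}), the three candidate leading contributions, already isolated in the proof of Theorem~\ref{th:LD2}, are the chaos-$2$ piece of $c_1^2(\bW_{j,k}^{(1)})^2$ (Gaussian fluctuations of typical size $n_j^{-1/2}\gamma_j^{2(d+K)}$), the chaos-$(q_1-1)$ piece of $2c_1c_{q_1}\bW_{j,k}^{(1)}\bW_{j,k}^{(q_1)}$ (Hermite-of-order-$(q_1-1)$ fluctuations), and the chaos-$2$ piece of $c_{q_1}^2(\bW_{j,k}^{(q_1)})^2$ (Rosenblatt fluctuations of typical size $n_j^{2d-1}\gamma_j^{2(\delta(q_1)+K)}$). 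From the spectral variance computations developed in Sections~\ref{s:preliminary} and~\ref{sec:LTSigma}, the pairwise thresholds between these three regimes are exactly the exponents $\nu_1,\nu_2,\nu_3$ introduced in~(\ref{e:defbeta}).

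The step that is specific to the regime $q_1\geq q_1^*$ is to invoke Lemma~\ref{lem:compexp}: one then has $\nu_3\leq\nu_2\leq\nu_1$, so the window $\gamma_j^{\nu_1}\ll n_j\ll\gamma_j^{\nu_3}$ that produced the Hermite-$(q_1-1)$ limit in Theorem~\ref{th:LD2} is empty. Consequently, for any admissible growth of $n_j$ versus $\gamma_j$, the Hermite-$(q_1-1)$ contribution is dominated in $L^2$ either by the Gaussian term (when $n_j\ll\gamma_j^{\nu_2}$) or by the Rosenblatt term (when $\gamma_j^{\nu_2}\ll n_j$), the cutoff $\nu_2$ being precisely the exponent at which the two surviving normalizations coincide.

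In case~\ref{item:LD31} I would then prove that the properly normalized chaos-$2$ sum built from $c_1^2(W_{\ell,j,k}^{(1)})^2$, $\ell=1,\dots,m$, converges to $c_1^2\mathcal{N}(0,\Gamma)$ by applying the fourth-moment theorem of Appendix~\ref{s:appendixA} to the symmetric kernels of the associated double Wiener integrals and identifying the limiting covariance as the $L^2$-norm of the $2\pi$-periodization of $|\lambda|^{-2(K+d)}\widehat{h}_{i,\infty}\overline{\widehat{h}}_{i',\infty}$, which produces the expression~(\ref{e:Gamma}). In case~\ref{item:LD32}, the dominant chaos-$2$ piece of $c_{q_1}^2(\bW_{j,k}^{(q_1)})^2$ is handled exactly as in case~\ref{it:thm1a} of Theorem~\ref{th:LD1} with $q_0$ replaced by $q_1$: I would pass to the limit in the symmetric kernel of the associated double Wiener integral and identify it with $c_{q_1}^2(q_1-1)!^{-1}[f^*(0)]^{q_1}\K_{q_1-1}Z_{2,d}(1)$.

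The main obstacle is not the identification of these two limits---which rests on machinery already developed for Theorems~\ref{th:LD1} and~\ref{th:LD2}---but the uniform $L^2$ control of all discarded chaos components, most notably the lower-order pieces of $(\bW_{j,k}^{(q_\ell)})^2$ for $\ell\geq 1$ and the cross products $\bW_{j,k}^{(q_{\ell_1})}\bW_{j,k}^{(q_{\ell_2})}$ with $\ell_1<\ell_2$ and $q_{\ell_2}-q_{\ell_1}\geq2$. They must be shown to remain negligible even at the critical scale $n_j\asymp\gamma_j^{\nu_2}$, which is where the decay assumption~(\ref{e:condcv}) on the Hermite coefficients and the technical spectral estimates of Section~\ref{sec:techlemma} enter crucially, as they allow a uniform summation over the possibly infinite index set $\ellset$.
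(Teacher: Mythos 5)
Your proposal follows essentially the same route as the paper: decompose $\overline{\bS}_{n_j,j}$ into chaos components via the product formula, isolate the three candidate leading terms ($S_{n_j,j}^{(1,1,0)}$, the Rosenblatt term from $\Sigma^{(0)}+\Sigma^{(1)}$ with $q_0^*=q_1$, and the Hermite-$(q_1-1)$ term from $\Sigma^{(2)}$), use Lemma~\ref{lem:compexp} to conclude that under $q_1\geq q_1^*$ the ordering $\nu_3\leq\nu_2\leq\nu_1$ empties the window where the Hermite term could dominate, and control the remaining infinitely many terms uniformly via~(\ref{e:condcv}). The only quibble is that the Gaussian limit of $S_{n_j,j}^{(1,1,0)}$ is not obtained from a fourth-moment theorem in Appendix~\ref{s:appendixA} (which contains only the integral representations and product formula) but is imported from Proposition~\ref{pro:Snj110}, i.e.\ from the earlier paper of the authors; this does not affect the validity of your argument.
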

\begin{remark}\label{rem:thLD3}
As in the case of Theorem~\ref{th:LD2}, the case where $n_j\sim
C_0\gamma_j^{\nu_2}$ as $j\to\infty$, seems quite complicated to
deal with.
\end{remark}
\begin{proof}
This theorem is proved in Section~\ref{s:proofmain23}.
\end{proof}
\noindent{\bf Example.} We now illustrate Theorem~\ref{th:LD2}
and~\ref{th:LD3}. Our setting is still that
of~\cite{moulines:roueff:taqqu:2007:jtsa} as above.

The memory parameter $d$ is assumed to belong to $(3/8,1/2)$.
Consider the case where $$G=H_1+H_{q_1}\;,$$ with $3<q_1<1/(1-2d)$. We
will prove in the sequel that the wavelet coefficients of $Y$ can
be expanded as
\[
W_{j,k}=W_{j,k}^{(1)}+W_{j,k}^{(q_1)}\;,
\]
where $W_{j,k}^{(1)}$ is Gaussian and $W_{j,k}^{(q_1)}$ belongs to
the chaos of order $q_1$. Then,
\[
W_{j,k}^2=[W_{j,k}^{(1)}]^2+[W_{j,k}^{(q_1)}]^2+2W_{j,k}^{(1)}W_{j,k}^{(q_1)}\;.
\]
The empirical mean of the terms $[W_{j,k}^{(1)}]^2$ behaves as in the
Gaussian case and is asymptotically Gaussian. The empirical mean of the
terms $[W_{j,k}^{(q_1)}]^2$ behaves as in the case $G=H_{q_1}$ with
$q_1\geq 2$ and is asymptotically Rosenblatt. Finally the
empirical mean of the terms $2W_{j,k}^{(1)}W_{j,k}^{(q_1)}$ belongs to
the chaos of order $q_1-1>2$. The asymptotic behavior of the
scalogram then depends on which of the three terms is leading.

To see what happens, let $N$ be as before the number of
observations and assume that $\gamma_j=2^j$. Let $n_j\sim N2^{-j}$
as $j\to\infty$ as in~(\ref{e:nj}). Distinguish two cases~:
$q_1<q_1^*$ and $q_1\geq q_1^*$ where $q_1^*$ is defined
in~(\ref{e:q1}).

If $q_1<q_1^*$, the three possibilities stated in
Theorem~\ref{th:LD2} can occur~:
\begin{enumerate}[label=$\bullet$]
\item if $2^{-j(\nu_1+1)}N\to 0$ as $N,j\to\infty$, then the term
corresponding to $[W_{j,k}^{(1)}]^2$ is leading and the scalogram
$S_{n_j,j}$ of the process $\{Y_t\}_{t\in\mathbb{Z}}$ is
asymptotically Gaussian (case~\ref{item:LD23}).
\item if $2^{-j(\nu_1+1)}N\to \infty$ and
$2^{-j(\nu_3+1)}N\to 0$ as $N,j\to\infty$, then the term
corresponding to $2W_{j,k}^{(1)}W_{j,k}^{(q_1)}$ is leading and
the scalogram $S_{n_j,j}$ of $\{Y_t\}$ belongs asymptotically to
the chaos of order $q_1-1>2$ (case~\ref{item:LD22}).
\item if $2^{-j(\nu_3+1)}N\to
\infty$ as $N,j\to\infty$, then the term corresponding to
$[W_{j,k}^{(q_1)}]^2$ with $q_1>3$ is leading and the scalogram
$S_{n_j,j}$ of $\{Y_t\}$ is asymptotically Rosenblatt (case~\ref{item:LD21}).
\end{enumerate}

If we now assume that $q_1\geq q_1^*$, we are in the setting of
Theorem~\ref{th:LD3} and the term corresponding to
$2W_{j,k}^{(1)}W_{j,k}^{(q_1)}$ is always negligible. Then only
two different situations can occur~:
\begin{enumerate}[label=$\bullet$]
\item if $2^{-j(\nu_2+1)}N\to 0$ as $N,j\to\infty$, then the term
corresponding to $[W_{j,k}^{(1)}]^2$ is leading and the scalogram
$S_{n_j,j}$ of $\{Y_t\}$ is asymptotically Gaussian (case~\ref{item:LD31}).
\item if $2^{-j(\nu_2+1)}N\to \infty$ as $N,j\to\infty$, then the term
  corresponding to $[W_{j,k}^{(q_1)}]^2$ is leading and the scalogram
  $S_{n_j,j}$ of $\{Y_t\}$ is asymptotically Rosenblatt (case~\ref{item:LD32}).
\end{enumerate}

\section{The basic decomposition}\label{s:expansion}
Our goal is to investigate the asymptotic behavior of $\overline{\bS}_{n_j,j}$
as defined in~(\ref{e:snjbold}) when $j\rightarrow +\infty$. As
in~\cite{clausel-roueff-taqqu-tudor-2011b}, our main tool will be the
Wiener-It\^o chaos expansion of $\overline{\bS}_{n_j,j}$ which involves
multiple stochastic integrals $\widehat{I}_q$, $q=1,2,\dots$.  These are
defined in Appendix~\ref{s:appendixA}. In this case, the situation is more
complex than in the case $G=H_{q_0}$ since as proved
in~\cite{clausel-roueff-taqqu-tudor-2011a}, the wavelet coefficients
$\bW_{j,k}$, defined in~(\ref{e:Wjkbold}), admit an expansion into Wiener chaos
as follows~:
\begin{equation}\label{e:Wjk}
\bW_{j,k}=
\sum_{q=1}^{\infty}\frac{c_q}{q!}\bW_{j,k}^{(q)}\;,
\end{equation}
where $\bW_{j,k}^{(q)}$ is a multiple integral of order $q$. Then, using the
same convention as in~(\ref{eq:conv-square-vector}), we have
\begin{equation}\label{e:sumOFprodwjk}
\bW_{j,k}^2=
\sum_{q=1}^{\infty}\left(\frac{c_q}{q!}\right)^2\;\left(\bW_{j,k}^{(q)}\right)^2
+2 \sum_{q'=2}^\infty\sum_{q=1}^{q'-1} \frac{c_q}{q!}
\frac{c_{q'}}{q'!}\bW_{j,k}^{(q)}\bW_{j,k}^{(q')}\;,
\end{equation}
where the convergence of the infinite sums hold in $L^1(\Omega)$ sense.

Each $\bW_{j,k}^{(q)}$ is a multiple integral of order $q$
of some multidimensional kernel $\mathbf{f}_{j,k} ^{(q)}$, that is
\begin{equation}\label{e:a2}
\bW_{j,k}^{(q)} =\widehat{I}_{q}
(\mathbf{f}_{j,k}^{(q)})\;.
\end{equation}
Now, using the product formula for multiple stochastic
integrals~(\ref{EqProdIntSto}), one gets, as shown in
Proposition~\ref{pro:decompSnjLD} that, for any
$(n,j)\in\mathbb{N}^2$,
\begin{align}
\nonumber \overline{\mathbf{S}}_{n,j}& =
\frac{1}{n}\sum_{k=0}^{n-1}\bW_{j,k}^2-\mathbb{E}[\bW_{j,0}^2]\\
\nonumber &=
\sum_{q=1}^{\infty}\left(\frac{c_q}{q!}\right)^2\;\;\sum_{p=0}^{q-1}p!
{{q}\choose{p}}^2\;\mathbf{S}_{n,j}^{(q,q,p)}\\
\label{e:decompSnjLD}&
\hspace{0.5cm}+2 \sum_{q'=2}^\infty\sum_{q=1}^{q'-1}\frac{c_q}{q!}
\frac{c_{q'}}{q'!} \sum_{p=0}^{q} \; p!\;
{{q}\choose{p}}{{q'}\choose{p}} \; \mathbf{S}_{n,j}^{(q,q',p)}\;,
\end{align}
where, for all $q,q'\geq 1$ and $0\leq p\leq \min(q,q')$, $\bS_{n,j}^{(q,q',p)}$ is of the form
\begin{equation}\label{e:Snjrrp1}
\bS_{n,j}^{(q,q',p)}=
\widehat{I}_{q+q'-2p}(\mathbf{g}_{n,j}^{(q,q',p)})\;.
\end{equation}
We call $q+q'-2p$ the {\it order} of the summand
$\mathbf{S}_{n,j}^{(q,q',p)}$. For any $n,j,q,q',p$, the function
$\mathbf{g}_{n,j}^{(q,q',p)}(\xi)$,
$\xi=(\xi_1,\dots,\xi_{q+q'-2p})\in\mathbb{R}^{q+q'-2p}$ is
defined for every $p,q,q'$ as
\begin{equation}\label{e:g-def}
\mathbf{g}_{n,j}^{(q,q',p)}(\xi)= \frac{1}{n}\sum_{k=0}^{n-1}
\left( \mathbf{f}_{j,k}^{(q)}\overline{\otimes}_{p}
\mathbf{f}_{j,k}^{(q')}\right)\;,
\end{equation}
where the operation $\overline{\otimes}_{p}$ is defined
in~(\ref{e:times-p}) for each entry. The expansion in Wiener chaos
of $\overline{\mathbf{S}}_{n,j}$ implies that
\begin{equation}\label{e:s3}
\overline{\mathbf{S}}_{n,j}=c_1^2
\mathbf{S}_{n,j}^{(1,1,0)}+\mathbf{\Sigma}_{n,j}^{(0)}+\mathbf{\Sigma}_{n,j}^{(1)}+
\mathbf{\Sigma}_{n,j}^{(2)}+\mathbf{\Sigma}_{n,j}^{(3)}\;,
\end{equation}
with
\begin{equation}\label{e:defSigma0}
\mathbf{\Sigma}_{n,j}^{(0)}=\sum_{\ell\in\mathcal{L},\,q_{\ell}\neq
1}\frac{c_{q_{\ell}}^2}{(q_{\ell}!)^2}\sum_{p=0}^{q_{\ell}-1} p!
{{q_{\ell}}\choose{p}}^2\,\mathbf{S}_{n,j}^{(q_{\ell},q_{\ell},p)},
\end{equation}
\begin{equation}\label{e:defSigma1}
\mathbf{\Sigma}_{n,j}^{(1)}=2\sum_{(\ell_1,\ell_2)\in
J}\frac{c_{q_{\ell_1}}}{q_{\ell_1}!}\frac{c_{q_{\ell_2}}}{q_{\ell_2}!}
\sum_{p=0}^{q_{\ell_1}}
p! {{q_{\ell_1}}\choose{p}} {{q_{\ell_2}}\choose{p}}
\,\mathbf{S}_{n,j}^{(q_{\ell_1},q_{\ell_2},p)},
\end{equation}
\begin{align}\nonumber
\mathbf{\Sigma}_{n,j}^{(2)}&
=2\sum_{\ell\in\ellset,\ell\geq m_0}\frac{c_1c_{q_{\ell}}}{q_{\ell}!}
\sum_{p=0}^{{1}}
p! {{{1}}\choose{p}} {{q_{\ell}}\choose{p}}
\,\mathbf{S}_{n,j}^{({1},q_{\ell},p)},
\\
\label{e:defSigma2}
&=2\sum_{\ell\in\ellset,\ell\geq
m_0}\left(\frac{c_{1}c_{q_{\ell}}}{q_{\ell}!}
\mathbf{S}_{n,j}^{(1,q_{\ell},0)}+\frac{ c_{1}c_{q_{\ell}}}{(q_{\ell}-1)!}
\mathbf{S}_{n,j}^{(1,q_{\ell},1)}\right)\;,
\end{align}
\begin{equation}\label{e:defSigma3}
\mathbf{\Sigma}_{n,j}^{(3)}=2\sum_{\ell\in
I}\frac{c_{q_{\ell}}}{q_{\ell}!}\frac{c_{q_{\ell}+1}}{(q_{\ell}+1)!}\sum_{p=0}^{q_{\ell}}
p!{{q_{\ell}}\choose{p}}
{{q_{\ell}+1}\choose{p}} \,
\mathbf{S}_{n,j}^{(q_{\ell},q_{\ell}+1,p)}\;.
\end{equation}

The sets $\ellset,\,I$ and $J$ are defined in~(\ref{eq:L}),~(\ref{eq:defJ})
and~(\ref{eq:defI}) respectively and the index $m_0$, defined in~(\ref{e:m0}),
is such that $q_{m_0}\geq3$.

Let us comment on the decomposition (\ref{e:s3}). The sum
$\mathbf{\Sigma}_{n,j}^{(0)}$ contains terms of the form
$\mathbf{S}_{n,j}^{(q,q,p)}$ that is multiple integrals of order
$2(q-p)$. Then this sum, after subtracting its expectation, has
only summands of order
$2,4,6,\dots$ in the Wiener chaos.

The sum $\mathbf{\Sigma}_{n,j}^{(1)}$ contains multiple integrals of orders
$q+q'-2p$ with $q\neq 1$, $q'\neq 1$, $p\leq q\wedge q'$ and $|q-q'|\geq
2$. That means that all the summands in $\mathbf{\Sigma}_{n,j}^{(1)}$ are of
order greater than or equal to $2$.

The sum $\mathbf{\Sigma}_{n,j}^{(2)}$ contains multiple integrals of orders
$q+q'-2p$ with $q=1 $, $q'\geq q_{m_0}\geq 3$ and $p=0$ or $1$. All the
summands in $\mathbf{\Sigma}_{n,j}^{(2)}$ are then of order greater than or
equal to $q_{m_0}-1\geq 2$.

The last sum $\mathbf{\Sigma}_{n,j}^{(3)} $ contains terms of the form
$\mathbf{S}_{n,j}^{(q,q+1,p)}$, that is multiple integrals of order
$q+(q+1)-2p=2q+1-2p$. When $p=q$, $q+1+q-2q=1$, thus one can have components in
the first Wiener chaos, that is Gaussian terms.

We will see that
$\mathbf{\Sigma}_{n,j}^{(0)}+\mathbf{\Sigma}_{n,j}^{(1)}$ will
converge to a non-Gaussian limit, more precisely to a random
variable in the second Wiener chaos. The sum
$\mathbf{\Sigma}_{n,j}^{(2)}$ will also converge to a non-Gaussian
limit, more precisely to a random variable in the Wiener chaos of
order $q_{m_0}-1$. Finally
$\mathbf{\Sigma}_{n,j}^{(3)}$ will tend to a Gaussian limit.

\begin{remark}
  It is the presence of
$\mathbf{\Sigma}_{n,j}^{(2)}$ which creates the possibility of
having as limit a multiple integral of order greater than $2$.
Thus, starting with a process
\[
G(X_t)=H_1(X_t)+H_{q_1}(X_t)\;,
\]
with $q_1\geq 4$, then $q_{m_0}=q_1$ and one may obtain as limit of the
scalogram a Hermite process of order $q_1-1\geq 3$.
\end{remark}

Let us formalize the above decomposition of $\mathbf{S}_{n,j}$ and
give a more explicit expression for the function
$\mathbf{g}_{n,j}^{(q,q',p)}$ in~(\ref{e:g-def}). The next
proposition is a generalization of Proposition~6.1
of~\cite{clausel-roueff-taqqu-tudor-2011b}.
\begin{proposition}\label{pro:decompSnjLD}
For all $j$, $\{\bW_{j,k}\}_{k\in\mathbb{Z}}$ is a weakly
stationary sequence. Moreover, for any $(n,j)\in\mathbb{N}^2$,
$\overline{\mathbf{S}}_{n,j}$ can be expressed as~(\ref{e:decompSnjLD})
where the infinite sums converge in the $L^1(\Omega)$ sense.
The function $\mathbf{g}_{n,j}^{(q,q',p)}(\xi)$,
  $\xi=(\xi_1,\dots,\xi_{q+q'-2p})\in\mathbb{R}^{q+q'-2p}$,
  in~(\ref{e:Snjrrp1}), equals
\begin{equation}\label{e:bg}
\begin{array}{lll}\mathbf{g}_{n,j}^{(q,q',p)}(\xi)&=& D_n(\gamma_j\{\xi_1+\dots+\xi_{q+q'-2p}\})\times \prod_{i=1}^{q+q'-2p}[\sqrt{f(\xi_i)}\1_{(-\pi,\pi)}(\xi_i)]\\
&&\times\;\mathbf{\widehat{\kappa}}_j^{(p)}(\xi_1+\dots+\xi_{q-p},\xi_{q-p+1}+\dots+\xi_{q+q'-2p})
\;.
\end{array}
\end{equation}
Here $f$ denotes the spectral density of the underlying Gaussian
process $X$ and
\begin{equation}\label{eq:dirichlet}
D_n(u)=\frac{1}{n}\sum_{k=0}^{n-1}\rme^{\rmi k u}=
\frac{1-\rme^{\rmi n u}}{n(1-\rme^{\rmi  u})}\;,
\end{equation}
denotes the normalized Dirichlet kernel. Finally, for
$\xi_1,\xi_2\in\mathbb{R}$, if $p\neq0$,
\begin{equation}\label{e:intrepKjp1}
{\widehat{\bkappa}}_{j}^{(p)}(\xi_1,\xi_2)=\int_{(-\pi,\pi)^{p}}\left(\prod_{i=1}^{p}f(\lambda_i)\right)\;
\mathbf{\widehat{h}}_j^{(K)}(\lambda_1+\dots+\lambda_p+\xi_1)\overline{\mathbf{\widehat{h}}_{j}^{(K)}
(\lambda_1+\dots+\lambda_p-\xi_2)}\;\rmd^p\lambda\;,
\end{equation}
and, if $p=0$,
\begin{equation}\label{e:intrepKjp1bis}
{\widehat{\bkappa}}_{j}^{(p)}(\xi_1,\xi_2)=
\mathbf{\widehat{h}}_j^{(K)}(\xi_1)\overline{\mathbf{\widehat{h}}_j^{(K)}(\xi_2)}\;.
\end{equation}
\end{proposition}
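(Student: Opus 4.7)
The plan is to extend the single-Hermite-rank computation from Proposition~6.1 of \cite{clausel-roueff-taqqu-tudor-2011b} so as to accommodate every nonzero term in the Hermite expansion of $G$ simultaneously. First, weak stationarity of $\{\bW_{j,k}\}_{k\in\mathbb{Z}}$ at fixed $j$ follows directly from the vanishing-moment assumption $M\ge K$: summation by parts rewrites $\sum_{t}\bh_j(\gamma_j k-t)Y_t$ as a finitely-supported filter applied to the stationary sequence $(\Delta^K Y)_t=G(X_t)$ and then sub-sampled on the lattice $\gamma_j\mathbb{Z}$, which is stationary in $k$.

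Next I would plug the Hermite expansion (\ref{e:chaos-exp}) into the definition (\ref{e:Wjkbold}) and use the spectral Wiener--It\^o representation of $H_q(X_t)$, with kernel $\rme^{\rmi t(\lambda_1+\cdots+\lambda_q)}\prod_{i}\sqrt{f(\lambda_i)}\1_{(-\pi,\pi)}(\lambda_i)$, to obtain (\ref{e:Wjk}) with
\[
\mathbf{f}_{j,k}^{(q)}(\lambda_1,\dots,\lambda_q) = \rme^{\rmi\gamma_j k(\lambda_1+\cdots+\lambda_q)}\,\mathbf{\widehat{h}}_j^{(K)}(\lambda_1+\cdots+\lambda_q)\prod_{i=1}^{q}\sqrt{f(\lambda_i)}\1_{(-\pi,\pi)}(\lambda_i),
\]
where $\mathbf{\widehat{h}}_j^{(K)}(\lambda) = \mathbf{\widehat{h}}_j(\lambda)(1-\rme^{-\rmi\lambda})^{-K}$ is square-integrable thanks to the $M\ge K$ vanishing moments. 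Squaring $\bW_{j,k}$ as in (\ref{e:sumOFprodwjk}) and applying the product formula (\ref{EqProdIntSto}) to each cross term $\widehat{I}_q(\mathbf{f}_{j,k}^{(q)})\widehat{I}_{q'}(\mathbf{f}_{j,k}^{(q')})$ produces a linear combination of $\widehat{I}_{q+q'-2p}(\mathbf{f}_{j,k}^{(q)}\overline{\otimes}_{p}\mathbf{f}_{j,k}^{(q')})$ with the advertised combinatorial weights; the $p=q=q'$ term is deterministic and equals $\mathbb{E}[W_{\ell,j,k}^2]$, so it disappears upon centering. Averaging over $k=0,\dots,n-1$ and using linearity of $\widehat{I}_{q+q'-2p}$ in its kernel then yields (\ref{e:decompSnjLD}) together with the definition (\ref{e:g-def}) of $\mathbf{g}_{n,j}^{(q,q',p)}$.

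The main combinatorial step is to make the $p$-fold contraction explicit. Integrating out the $p$ contracted frequencies against $f$ produces exactly the integral (\ref{e:intrepKjp1}) defining $\widehat{\bkappa}_j^{(p)}$; the two arguments of $\widehat{\bkappa}_j^{(p)}$ arise as the frequency-sums of the uncontracted variables coming from $\mathbf{f}_{j,k}^{(q)}$ and from $\mathbf{f}_{j,k}^{(q')}$ respectively, which is precisely how they are placed in (\ref{e:bg}); the two $\mathbf{\widehat{h}}_j^{(K)}$ factors (one conjugated because the second kernel is squared modulus) combine into the pair appearing in (\ref{e:intrepKjp1}). The remaining uncontracted variables contribute the spectral factor $\prod_{i}\sqrt{f(\xi_i)}\1_{(-\pi,\pi)}(\xi_i)$. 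Finally, the phases $\rme^{\rmi\gamma_j k(\xi_1+\cdots+\xi_{q+q'-2p})}$ from the two factors coalesce, and averaging over $k=0,\dots,n-1$ gives the normalized Dirichlet kernel $D_n$ of (\ref{eq:dirichlet}); the degenerate case $p=0$ reduces to (\ref{e:intrepKjp1bis}) by inspection.

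The $L^1(\Omega)$ convergence of the double series in (\ref{e:decompSnjLD}) follows by dominating each chaos component in $L^2$ via the Wiener isometry, combined with the decay assumption (\ref{e:condcv}) on the $c_q$ and with the square-summability (\ref{e:summability}). The principal obstacle is the contraction bookkeeping leading to $\widehat{\bkappa}_j^{(p)}$; the stationarity and the tail estimates for the series are routine once the kernel formula (\ref{e:bg}) is established.
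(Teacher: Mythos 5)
Your proposal follows essentially the same route as the paper's proof: expand $\bW_{j,k}$ in Wiener chaos with the spectral kernels $\mathbf{f}_{j,k}^{(q)}$, apply the product formula~(\ref{EqProdIntSto}) to each cross term, identify the $p$-fold contraction with $\widehat{\bkappa}_j^{(p)}$, obtain the Dirichlet kernel by averaging the phases over $k$, and observe that the zeroth-chaos ($p=q=q'$) terms reproduce $\mathbb{E}[\bW_{j,0}^2]$ and cancel upon centering. The argument is correct (the conjugation in~(\ref{e:intrepKjp1}) is more precisely traced to the sign flip in the contraction~(\ref{e:times-p}) together with the Hermitian symmetry~(\ref{e:antisym}) of the kernels, rather than to a "squared modulus," but this is a cosmetic point), so no further comparison is needed.
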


\noindent{\bf Notation.} To simplify the notation, for any integer
$p$ and $q_1,\dots,q_p\in\mathbb{Z}_+$ we shall denote by
$\Sigma_{q_1,\dots,q_p}$, the $\mathbb{C}^{q_1+\dots+q_p}\to\mathbb{C}^p$
function defined, for all
$y=(y_1,\dots,y_{q_1+\dots+q_p})\in\mathbb{C}^{q_1+\dots+q_p}$ by
\begin{equation}\label{e:DefPartialSums}
\Sigma_{q_1,\dots,q_p}(y)=\left(\sum_{i=1}^{q_1}
y_i,\sum_{i=q_1+1}^{q_1+q_2} y_i,\dots,
\sum_{i=q_1+\dots+q_{p-1}+1}^{q_1+\dots+q_p} y_i\right)\;.
  \end{equation}
Note that, for $p=1$, one simply has $\Sigma_q(y)=y_1+\dots+y_q$.

With this notation, (\ref{e:Snjrrp1}) and~(\ref{e:intrepKjp1})
become respectively
\begin{align}\label{e:Snjrrp}
&\mathbf{S}_{n,j}^{(q,q',p)}=\widehat{I}_{q+q'-2p}\left(
D_n\circ\Sigma_{q+q'-2p}(\gamma_j\times\cdot)\times
[\sqrt{f}\1_{(-\pi,\pi)}]^{\otimes(q+q'-2p)}
\times {\widehat{\bkappa}}_j^{(p)}\circ\Sigma_{q-p,q'-p}\right)\;,\\
\label{e:intrepKjp}
&{\widehat{\bkappa}}_{j}^{(p)}(\xi_1,\xi_2)=\int_{(-\pi,\pi)^{p}}
f^{\otimes p}(\lambda)\;
\mathbf{\widehat{h}}_j^{(K)}(\Sigma_p(\lambda)+\xi_1)\overline{\mathbf{\widehat{h}}_{j}^{(K)}(\Sigma_p(\lambda)-\xi_2)}\;\rmd^p\lambda\;,\mbox{
if }p\neq 0,
\end{align}
where $\circ$ denotes the composition of functions,
$\lambda=(\lambda_1,\cdots,\lambda_p)$ and $f^{\otimes
  p}(\lambda)=f(\lambda_1)\cdots f(\lambda_p)$ is written as a tensor
product.

\begin{proof}[Proof of Proposition~\ref{pro:decompSnjLD}]
  For sake of simplicity we can assume that $\bW$ is a vector of length $m=1$
  since the case $m\geq 2$ can be deduced by applying the case $m=1$ to each
  entries. We must give an expansion in Wiener chaos of the one dimensional
  scalogram $S_{n,j}-\mathbb{E}(S_{n,j})$ in our setting.
  Using~(\ref{e:defsnj}),~(\ref{e:sumOFprodwjk}),~(\ref{e:a2}) and the product
  formula~(\ref{EqProdIntSto}) of Proposition~\ref{pro:intproduct}, we have as
  in Proposition~6.1 of~\cite{clausel-roueff-taqqu-tudor-2011b}
\begin{equation}\label{e:a4}
S_{n,j}=\sum_{q,q'=1}^\infty \frac{c_q
c_{q'}}{q!q'!}\sum_{p=0}^{q\wedge q'}
p!{{q}\choose{p}}{{q'}\choose{p}}\widehat{I}_{q+q'-2p}\left(g_{n,j}^{(q,q',p)}\right)\;,
\end{equation}
where
\[
g_{n,j}^{(q,q',p)}=\frac{1}{n}\sum_{k=0}^{n-1}f_{j,k}^{(q)}\overline{\otimes}_p
f_{j,k}^{(q')}\;.
\]
By~(\ref{e:fjk_representation1}),
\begin{equation}\label{e:fjk_representation2}
f_{j,k}^{(q)}(\xi)= \exp\circ\Sigma_q(\rmi
k\gamma_j\xi)\left(\widehat{h}_{j}^{(K)}\circ\Sigma_q(\xi)\right)\left(f^{\otimes
q}(\xi)\right)^{1/2} \1_{(-\pi,\pi)}^{\otimes q}(\xi)
\;,\xi\in\mathbb{R}^q\;.
\end{equation}
If $q+q'-2p\neq 0$, let $\xi=(\xi_1,\cdots,\xi_{q+q'-2p})$. As in
\cite{clausel-roueff-taqqu-tudor-2011b} using~(\ref{e:times-p}), we get that
$g_{n,j}^{(q,q',p)}$ is a function with $q+q'-2p $ variables given by
\begin{equation*}
g_{n,j}^{(q,q',p)}(\xi) = \frac{1}{n}
\sum_{k=0}^{n-1}\exp\circ\Sigma_{q+q'-2p}(\rmi k\gamma_j\xi)
\times [\sqrt{f}\1_{(-\pi,\pi)}]^{\otimes q+q'-2p}(\xi) \times
\widehat{\kappa}_j^{(p)}\circ\Sigma_{q-p,q'-p}(\xi)\;.
\end{equation*}
The Dirichlet kernel $D_{n}$ appears when one computes the sum
$\frac{1}{n} \sum_{k=0}^{n-1}\exp\circ\Sigma_{q+q'-2p}(\rmi
k\gamma_j\xi)$. This implies the  formula (\ref{e:bg}).

In addition, the chaos of order zero appears in the expression~(\ref{e:a4}) of
$S_{n,j}$ in the terms with $p=q=q'$ since
$\widehat{I}_{q+q'-2p}=\widehat{I}_0$. In this case, a similar argument as in
\cite{clausel-roueff-taqqu-tudor-2011b} leads to
\[
\frac{1}{n}\sum_{q=1}^{\infty}\frac{c_q^2}{(q!)^2}\sum_{k=1}^{n}\mathbb{E}(|W_{j,k}^{(q)}|^2)
=\sum_{q=1}^{\infty}\frac{c_q^2}{(q!)^2}\times
\mathbb{E}(|W_{j,0}^{(q)}|^2)=\mathbb{E}(|W_{j,0}|^2) =
\mathbb{E}(S_{n,j})\;,
\]
by~(\ref{e:Wjk}) and~(\ref{e:sumOFprodwjk}). Therefore, in the univariate case $m=1$,
$\overline{\bS}_{n,j}=S_{n,j}-\mathbb{E}(S_{n,j})$ can be
expressed as stated in~(\ref{e:decompSnjLD}). The generalization
to the case $m\geq 2$ is straightforward.
\end{proof}

We prove in Section~\ref{sec:LTSigma} that
\begin{enumerate}[label=$\bullet$]
\item The leading term of
$\mathbf{\Sigma}_{n,j}^{(0)}+\mathbf{\Sigma}_{n,j}^{(1)}$ is
$\frac{c_{q_0^*}^2}{(q_0^*-1)!}\mathbf{S}_{n,j}^{(q_0^*,q_0^*,q_0^*-1)}$ (see
Propositions~\ref{pro:intSigma0} and \ref{pro:Sigma0}) where
\begin{equation}\label{e:q0star}
q_0^*=\left\{
\begin{array}{l}
q_1\mbox{ if }q_0=1,\\
q_0\mbox{ otherwise}.
\end{array}
\right.
\end{equation}
Note that $\mathbf{S}_{n,j}^{(q_0^*,q_0^*,q_0^*-1)}$ always is in the 2nd Wiener
chaos.
\item The leading term of $\mathbf{\Sigma}_{n,j}^{(2)}$ is $\frac{c_1
    c_{q_{m_0}}}{(q_{m_0}-1)!}\mathbf{S}_{n,j}^{(1,q_{m_0},1)}$ (see
  Propositions~\ref{pro:intSigma2} and \ref{pro:Sigma2}), which is in the
  $(q_{m_0}-1)$-th Wiener chaos.
\item The leading term of $\mathbf{\Sigma}_{n,j}^{(3)}$ is
$2\frac{c_{q_{\ell_0}} c_{q_{\ell_0}+1}}{q_{\ell_0}}\mathbf{S}_{n,j}^{(q_{\ell_0},q_{\ell_0}+1,q_{\ell_0})}$ (see
Propositions~\ref{pro:intSigma3} and \ref{pro:Sigma3}), which is Gaussian.
\end{enumerate}
Hence, for  the two classes of functions considered in
Sections~\ref{s:main1} and~\ref{s:main23}, we have to compare at
most four terms : $\mathbf{S}_{n,j}^{(1,1,0)}$,
$\mathbf{S}_{n,j}^{(q_0^*,q_0^*,q_0^*-1)}$,
$\mathbf{S}_{n,j}^{(q_{\ell_0},q_{\ell_0}+1,p)}$,
$\mathbf{S}_{n,j}^{(1,q_{m_0},1)}$ which are respectively
asymptotically Gaussian, Rosenblatt, Gaussian and in the chaos of
order $q_{m_0}-1$.

Our three theorems are based on the study of the asymptotic
behavior of each sum (see Section~\ref{sec:LTSigma} below). We
first establish some preliminary results.
\section{Preliminary results}\label{s:preliminary}
\subsection{$L^2$ bounds}
To identify the leading terms, using the same approach than
in~\cite{clausel-roueff-taqqu-tudor-2011b} we will give an upper bound for the
$L^2$ norm of the multidimensional terms $\mathbf{S}_{n,j}^{(q,q',p)}$,
$q,q',p$ defined in~(\ref{e:Snjrrp1}) and~(\ref{e:Snjrrp}). Here, the main
difficulty is that unlike the case where $G=H_{q_0}$, we have to deal with an
infinity of terms. We have also to obtain more precise bounds than
in~\cite{clausel-roueff-taqqu-tudor-2011b}. In the following, for any
random vector
$\bZ$, the $L^2(\Omega)$ norm of $\bZ$ is denoted by
\begin{equation}\label{e:L2norm}
\|\bZ\|_2=\left(\mathbb{E}\left[|\bZ|^2\right]\right)^{1/2}\;.
\end{equation}
(Recall that $|\bZ|$ denotes the Euclidean norm of $\bZ$.)  Our goal in this
section is to specify how $\|\mathbf{S}_{n,j}^{(q,q',p)}\|_2$ depends on $q,q'$
and $p$. The difficulty is that the sum $\overline{\mathbf{S}}_{n,j}$
contains long--memory and short--memory terms having then different
normalization factors. To recover all the cases, we shall use not only
$\delta_+(q)$ and $\delta(q)$ defined in~(\ref{e:ldparamq}) but also
\begin{equation}\label{e:dplus}
\delta_-(q)=\max(-\delta(q),0)\;,\quad q\geq 0\;,
\end{equation}
so that
$\delta=\delta_+-\delta_-$ and $\delta_+,\delta_-$ are nonnegative. In
particular, $\delta(0)=\delta_+(0)=1/2$ and $\delta_-(0)=0$.

As in~\cite{clausel-roueff-taqqu-tudor-2011b}, the expression~(\ref{e:Snjrrp})
of $\mathbf{S}_{n,j}^{(q,q',p)}$ involves the kernel
${\widehat{\bkappa}^{(p)}_j}$ defined in~(\ref{e:intrepKjp}) and we have to
distinguish the two cases $p\neq 0$ and $p=0$. The following notations will be
used in the sequel. For any $s\in\mathbb{Z}_+$ and $d\in (0,1/2)$, set
\begin{equation}\label{e:KC}
\Lambda_s(a)= \prod_{i=1}^{s}(a_i!)^{1-2d},\quad\forall
a=(a_{1},\cdots,a_{s})\in\mathbb{N}^s\;.
\end{equation}
For any $q,q',p\geq 0$, set
\begin{align}\label{e:alpha}
\alpha(q,q',p)&=
\begin{cases}
\min\left(1-\delta_+(q-p)-\delta_+(q'-p),1/2\right)
&\text{if }p\neq 0\;,\\
1/2&\text{if }p=0\;,
\end{cases}\\
\label{e:beta}
\beta(q,p)&=\max\left(\delta_+(p)+\delta_+(q-p)-1/2,0\right)\\
\label{e:betap}
\beta'(q,q',p)&=\max\left(2\delta_+(p)+\delta_+(q-p)+\delta_+(q'-p)-1,-1/2\right)\;.
\end{align}
Notice  that for any $q\geq 0$, $\beta(q,0)=\delta_+(q)$. Define
the function $\varepsilon$ on $\mathbb{Z}_+$ as
\begin{equation}
\label{e:varepsilontilde}
\varepsilon(p) =
\begin{cases}
0 &\text{if for any }s\in\{1,\cdots,p\},\,s(1-2d)\neq 1\;,\\
1&\text{if for some
}s\in\{1,\cdots,p\},\,s(1-2d)=1\;.
\end{cases}
\end{equation}
The index $K$ is defined in~(\ref{e:DefY}) and the index $M$ is defined
in~(\ref{e:majoHj}), and, as noted in Appendix~\ref{s:appendixB}, the filter
$\bh_j(t)$ has null moments of order $0,1,\dots,M-1$.
\begin{proposition}\label{pro:SD}
Under Assumptions~A, the following bounds hold:
\begin{enumerate}[label=(\roman*)]
\item\label{item:pro:SD1i}
There exists some $C>0$ such that, for all $n,\gamma_j\geq 2$ and $1\leq q\leq
q'$ and $1\leq p\leq \min(q,q'-1)$,
\begin{equation}\label{e:boundL2P51a}
\begin{array}{lll}
\|\mathbf{S}_{n,j}^{(q,q',p)}\|_{2}&\leq&
C^{\frac{q+q'}{2}}\Lambda_2(q-p,p)^{1/2}\Lambda_2(q'-p,p)^{1/2}\gamma_j^{2K}\\
&&\times [n^{-\alpha(q,q',p)}\gamma_j^{\beta'(q,q',p)}+n^{-1/2}\gamma_j^{\beta(q,p)+\beta(q',p)}]\\
&&\times\left(\log n\right)^{\varepsilon(q+q'-2p)}(\log\gamma_j)^{3\varepsilon(q')}\;.
\end{array}
\end{equation}
\item\label{item:pro:SD1ii}
If $M\geq K+\max(\delta_+(q),\delta_+(q'))$, there exists some $C>0$ such that
 $n,\gamma_j\geq 2$ and $1\leq q\leq q'$,
\begin{equation}\label{e:boundL2P51b}
\|\mathbf{S}_{n,j}^{(q,q',0)}\|_{2}\leq
C^{\frac{q+q'}{2}}\Lambda_1(q)^{1/2}\Lambda_1(q')^{1/2}
n^{-1/2}\gamma_j^{2K+\delta_+(q)+\delta_+(q')}
\left(\log\gamma_j\right)^{\varepsilon(q')}\;.
\end{equation}
\end{enumerate}
\end{proposition}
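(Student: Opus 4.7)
The plan is to reduce the estimate to an $L^2$ bound on the kernel, and then split the analysis according to whether $p=0$ or $p\ge1$. By the isometry of multiple Wiener--It\^o integrals (see Appendix~\ref{s:appendixA}), for each coordinate of $\mathbf{S}_{n,j}^{(q,q',p)}$,
\[
\|S_{n,j}^{(q,q',p)}\|_2^2 = (q+q'-2p)!\,\|g_{n,j}^{(q,q',p)}\|_{L^2(\mathbb{R}^{q+q'-2p})}^2\;,
\]
with $g_{n,j}^{(q,q',p)}$ given explicitly in~(\ref{e:bg}). The first step is to expand the $L^2$ norm of~(\ref{e:bg}) as an integral in $\xi\in(-\pi,\pi)^{q+q'-2p}$, absorb the spectral density via $f(\xi_i)\le C|1-\rme^{-\rmi\xi_i}|^{-2d}$, and treat separately the Dirichlet kernel, the filter kernel $\widehat{\bkappa}_j^{(p)}$, and the product of $f$-factors.

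For the case $p=0$ (part~\ref{item:pro:SD1ii}), the kernel~(\ref{e:intrepKjp1bis}) factorizes as $\widehat{\bh}_j^{(K)}(\xi_1)\overline{\widehat{\bh}_j^{(K)}(\xi_2)}$, so the squared $L^2$ norm of~(\ref{e:bg}) decouples along the two groups of variables up to the Dirichlet factor. The Dirichlet kernel contributes the factor $n^{-1}$ thanks to the standard estimate $\int|D_n(\gamma_j u)|^2\,du \le C n^{-1}\gamma_j^{-1}$ once one fixes the sum variable. The remaining integrals are products of $|\widehat{h}_{\ell,\infty}(\gamma_j\Sigma)|^2|\Sigma|^{-2K}\prod|\xi_i|^{-2d}$, which are exactly the integrals of Lemma~5.1 in~\cite{clausel-roueff-taqqu-tudor-2011a}; the assumption $M\ge K+\max(\delta_+(q),\delta_+(q'))$ guarantees convergence at infinity via~(\ref{EqMajoHinf}) and yields the scaling $\gamma_j^{2K+\delta_+(q)+\delta_+(q')}$. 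The combinatorial $\Lambda_1$ factors follow from the factorials arising when one bounds iterated one-dimensional integrals with Brascamp--Lieb / H\"older type arguments, as in~\cite{clausel-roueff-taqqu-tudor-2011b}.

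For the case $p\ge1$ (part~\ref{item:pro:SD1i}), I would produce the two terms in~(\ref{e:boundL2P51a}) by two complementary strategies. The \emph{first} strategy bounds $|D_n|^2$ by $|D_n|$, using $|D_n(u)|\le\min(1,(n|u|)^{-1})$ to extract $n^{-\alpha(q,q',p)}$ together with a polynomial factor in $\gamma_j$; the exponent $\beta'(q,q',p)$ in~(\ref{e:betap}) is what remains after integrating the singular factors $|\xi_i|^{-2d}$ and the contributions of $\widehat{\bkappa}_j^{(p)}$ near the origin (where~(\ref{e:intrepKjp}) behaves like $|\cdot|^{-2\delta_+(p)}$ times filter factors). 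The \emph{second} strategy uses the crude bound $|D_n|^2\le|D_n|$ and instead exploits Plancherel / integration by parts directly on each block of $q-p$ and $q'-p$ variables, giving $n^{-1/2}\gamma_j^{\beta(q,p)+\beta(q',p)}$. The $L^2$ norm is at most the sum of these two contributions, which is precisely the right-hand side of~(\ref{e:boundL2P51a}).

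The main obstacle is keeping track of the borderline cases $s(1-2d)=1$, which produce the logarithmic factors $(\log n)^{\varepsilon(q+q'-2p)}$ and $(\log\gamma_j)^{3\varepsilon(q')}$, and, more importantly, controlling the dependence on $q,q'$ through $\Lambda_2$. The latter must be tracked uniformly across all chaos orders, since the expansion~(\ref{e:decompSnjLD}) involves infinitely many such terms; here the decay assumption~(\ref{e:condcv}) on $c_q$ will later be paired with $\Lambda_2^{1/2}$ to guarantee convergence in $L^1(\Omega)$. The factorial exponents $(q-p)!^{1-2d}$ arise naturally because each $q-p$ fold singular integral of $\prod|\xi_i|^{-2d}$ against $|\Sigma|^{-2\delta_+(q-p)}$ can be estimated recursively, each recursive step producing a constant that grows like a factorial power of $1-2d$. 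The remaining work is bookkeeping of the exponents in $n$ and $\gamma_j$, while choosing in each case the optimal splitting of the Dirichlet factor.
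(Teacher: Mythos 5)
Your overall skeleton agrees with the paper's: reduce via the Wiener--It\^o isometry to an $L^2$ bound on the kernel~(\ref{e:bg}), bound $\widehat{\kappa}_j^{(p)}$ separately (the paper does this in Lemma~\ref{lem:suphjkp}), integrate out the internal variables through iterated singular integrals whose constants grow like $(r!)^{1-2d}$ (Lemma~\ref{lem:Ja}, which is indeed the recursive estimate you allude to, not a Brascamp--Lieb argument), and treat $p=0$ and $p\geq1$ separately with logarithmic corrections at the resonances $s(1-2d)=1$. Part~\ref{item:pro:SD1ii} of your plan is essentially the paper's.

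For part~\ref{item:pro:SD1i}, however, there is a genuine gap in how you produce the two-term bracket in~(\ref{e:boundL2P51a}). You propose two \emph{global} strategies for handling the Dirichlet factor and then claim the $L^2$ norm is at most their sum. But neither strategy yields a valid bound for the whole integral: after reducing to the two sum variables $u_1=\Sigma_{q-p}(\cdot)$ and $v_1=\Sigma_{q'-p}(\cdot)$ (a reduction your plan never makes explicit), the factor $(1+n|\{u_1+v_1\}|)^{-2}$ concentrates on the $\sim\gamma_j$ lines $u_1+v_1=2\pi s$, $|s|\leq\gamma_j$, and the integral behaves completely differently on the cell $s=0$ --- where the Dirichlet concentration interacts with the singularities of $|u_1|^{-2\delta_+(q-p)}|v_1|^{-2\delta_+(q'-p)}$ at the origin and produces $n^{-\alpha(q,q',p)}\gamma_j^{\beta'(q,q',p)}$ --- than on the cells $s\neq0$, whose aggregate contribution is $n^{-1/2}\gamma_j^{\beta(q,p)+\beta(q',p)}$ (up to the normalization already extracted). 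The paper's proof therefore \emph{partitions} the domain into $\Delta_j^{(s)}=\{|u_1+v_1-2\pi s|\leq\pi\}$ and writes the integral as $A+2B$ with $A$ the near-diagonal cell and $B$ the sum over $s\geq1$; the two terms of~(\ref{e:boundL2P51a}) are the bounds for $A$ and $B$ respectively. Your ``first strategy'' applied globally would miss the off-diagonal mass (whose $\gamma_j$-exponent is not controlled by $\beta'$), and your ``second strategy'' would not capture the near-diagonal singular interaction; summing two bounds that are not actually valid does not repair this. You need the domain decomposition, not two alternative estimates of the same integral. A minor additional point: the isometry gives $\|\widehat{I}_r(g)\|_2^2=r!\,\|\widetilde{g}\|_{L^2}^2\leq r!\,\|g\|_{L^2}^2$ only after symmetrization, so your displayed equality should be an inequality.
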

\begin{proof}
  Proposition~\ref{pro:SD} extends Proposition~7.1 in
  \cite{clausel-roueff-taqqu-tudor-2011b}.  Its proof follows the same lines,
  see Appendix~\ref{sec:postponed-proof} for details.
\end{proof}
The following result will be sufficient to find the leading term in
Section~\ref{sec:LTSigma}.
\begin{corollary}\label{cor:SD}
  Under Assumptions~A, if $M\geq K+\max(\delta_+(q),\delta_+(q'))$, then here exists some $C>0$ whose value depends only on
$d$ and $f^*$ such that for all $n,j\geq 2$, $1\leq q\leq q'$ and $0\leq p\leq \min(q,q'-1)$,
\begin{multline}\label{e:boundL2P51ab}
\|\mathbf{S}_{n,j}^{(q,q',p)}\|_{2}\leq
C^{\frac{q+q'}{2}}\Lambda_2(q-p,p)^{1/2}\Lambda_2(q'-p,p)^{1/2}
n^{-\alpha(q,q',p)}\left(\log n\right)^{\varepsilon(q+q'-2p)}\\
\times\gamma_j^{2K+\beta(q,p)+\beta(q',p)}
(\log\gamma_j)^{3\varepsilon(q')}\;.
\end{multline}
\end{corollary}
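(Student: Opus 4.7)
The plan is to derive the corollary directly from Proposition~\ref{pro:SD} by treating the cases $p=0$ and $p\geq 1$ separately and verifying a few routine inequalities between the exponents $\alpha,\beta,\beta'$. No new analytic argument is needed beyond what is already in Proposition~\ref{pro:SD}; the work is purely bookkeeping.

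First I would dispose of the case $p=0$. Proposition~\ref{pro:SD}~\ref{item:pro:SD1ii} applies since $M\geq K+\max(\delta_+(q),\delta_+(q'))$. Observe that $\Lambda_1(q)=\Lambda_2(q,0)$ (because $0!=1$), that the definitions of $\alpha$ and $\beta$ give $\alpha(q,q',0)=1/2$ and $\beta(q,0)+\beta(q',0)=\delta_+(q)+\delta_+(q')$, and that $(\log n)^{\varepsilon(q+q')}\geq 1$ while $(\log\gamma_j)^{\varepsilon(q')}\leq(\log\gamma_j)^{3\varepsilon(q')}$ since $\gamma_j\geq 2$. Hence the bound~(\ref{e:boundL2P51b}) is dominated by the right-hand side of~(\ref{e:boundL2P51ab}).

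Next I would treat $1\leq p\leq\min(q,q'-1)$ using Proposition~\ref{pro:SD}~\ref{item:pro:SD1i}. It suffices to show that each of the two summands appearing in~(\ref{e:boundL2P51a}) is dominated by $n^{-\alpha(q,q',p)}\gamma_j^{\beta(q,p)+\beta(q',p)}$. For the second summand $n^{-1/2}\gamma_j^{\beta(q,p)+\beta(q',p)}$, the inequality $\alpha(q,q',p)\leq 1/2$ built into definition~(\ref{e:alpha}) yields $n^{-1/2}\leq n^{-\alpha(q,q',p)}$ for $n\geq 1$. For the first summand $n^{-\alpha(q,q',p)}\gamma_j^{\beta'(q,q',p)}$, since $\gamma_j\geq 2$, it is enough to verify
\[
\beta'(q,q',p)\;\leq\;\beta(q,p)+\beta(q',p).
\]
From~(\ref{e:beta}) we have $\beta(q,p)\geq\delta_+(p)+\delta_+(q-p)-1/2$ and $\beta(q',p)\geq\delta_+(p)+\delta_+(q'-p)-1/2$, so summing gives $\beta(q,p)+\beta(q',p)\geq 2\delta_+(p)+\delta_+(q-p)+\delta_+(q'-p)-1$. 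Together with $\beta(q,p)+\beta(q',p)\geq 0\geq -1/2$, this implies
\[
\beta(q,p)+\beta(q',p)\;\geq\;\max\bigl(2\delta_+(p)+\delta_+(q-p)+\delta_+(q'-p)-1,\,-1/2\bigr)\;=\;\beta'(q,q',p),
\]
which is the required inequality.

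Combining the two estimates, the sum inside the bracket of~(\ref{e:boundL2P51a}) is at most $2\,n^{-\alpha(q,q',p)}\gamma_j^{\beta(q,p)+\beta(q',p)}$, and the factor $2$ is absorbed in $C^{(q+q')/2}$ by enlarging $C$. The remaining factors $\Lambda_2(q-p,p)^{1/2}\Lambda_2(q'-p,p)^{1/2}$, $(\log n)^{\varepsilon(q+q'-2p)}$ and $(\log\gamma_j)^{3\varepsilon(q')}$ already match those claimed in~(\ref{e:boundL2P51ab}). Since the corollary is really a compilation of Proposition~\ref{pro:SD}, there is no genuine obstacle; the only point requiring attention is the elementary bound $\beta'(q,q',p)\leq\beta(q,p)+\beta(q',p)$ above, together with the observation that $\alpha(q,q',p)\leq 1/2$ in all cases.
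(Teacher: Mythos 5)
Your proof is correct and follows essentially the same route as the paper's: split into $p=0$ and $p\geq1$, reduce the bracketed sum in~(\ref{e:boundL2P51a}) via the two elementary inequalities $\alpha(q,q',p)\leq 1/2$ and $\beta'(q,q',p)\leq\beta(q,p)+\beta(q',p)$, and match $\Lambda_1(q)=\Lambda_2(q,0)$, $\alpha(q,q',0)=1/2$, $\beta(q,0)=\delta_+(q)$ in the $p=0$ case. The only cosmetic caveat is that $(\log n)^{\varepsilon(q+q')}\geq 1$ can fail at $n=2$ for the natural logarithm, but this (like the factor $2$ from the two summands) is absorbed by enlarging $C$, exactly as you note.
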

\begin{proof}
  We observe that, for all $1\leq p\leq q\leq q'$,
  $\beta'(q,q',p)\leq\beta(q,p)+\beta(q',p)$ and $\alpha(q,q',p)\leq1/2$. Hence
  the term between brackets in the right-hand side of~(\ref{e:boundL2P51a}) is
  bounded by
  $n^{-\alpha(q,q',p)}\times\gamma_j^{2K+\beta(q,p)+\beta(q',p)}$. This
  gives~(\ref{e:boundL2P51ab}) in the case $p\geq1$.  The case $p=0$ is
  obtained by using~(\ref{e:boundL2P51b}) and computing
  $\Lambda_2(q,0)=\Lambda_1(q)$, $\alpha(q,q',0)=1/2$, $\beta(q,0)=\delta_+(q)$
  and $\beta(q',0)=\delta_+(q')$.
\end{proof}
\subsection{Asymptotic behavior of the leading terms}

We now investigate the exact asymptotic behavior of the terms that will turn
out to be leading in the sum~(\ref{e:decompSnjLD}).

Let us first suppose that the bounds in Proposition~\ref{pro:SD} are sharp
enough to determine which terms are leading. Since $\gamma_j\to\infty$ and
$n=n_j\to\infty$, those for which the bounds have the largest exponents
$\beta(q,p)$ and $\beta(q',p)$ and the lowest exponent $\alpha(q,q',p)$ are
more likely to dominate, in particular, if $\delta_+(p)$, $\delta_+(q-p)$,
$\delta_+(q'-p),\delta_+(p)+\delta_+(q-p)-1/2$,
$\delta_+(p)+\delta_+(q'-p)-1/2$ and $1/2-(1-\delta_+(q-p)-\delta_+(q'-p))$ are
all positive. Using~(\ref{e:dqq>0}), if $p>0$, this happens for
$0<p,q-p,q'-p,q,q',q-p+q'-p<1/(1-2d)$, that is (taking $q\leq q'$ without loss
of generality),
\begin{equation}\label{e:idLT}
0<p\leq q\leq q'<1/(1-2d)\quad\text{and}\quad 0<q+q'-2p<1/(1-2d)\;.
\end{equation}
In particular, for such a triplet $(p,q,q')$, we have
$\varepsilon(q')=\varepsilon(q+q'-2p)=0$ so that bounds
in~(\ref{e:boundL2P51a}) and~(\ref{e:boundL2P51b}) involving logarithms will
not appear in these terms.  We shall check
afterwards (in Section~\ref{sec:LTSigma}) that indeed, in all the cases
we consider, either such a term is leading in the sum~(\ref{e:decompSnjLD}), or
the leading term is  $\mathbf{S}_{n_j,j}^{(1,1,0)}$ ($q=q'=1$ and $p=0$). The bounds established in
Proposition~\ref{pro:SD} will be sharp enough for this goal.

This is why, in the following, we shall only determine the asymptotic
behaviors of $\mathbf{S}_{n_j,j}^{(1,1,0)}$ and of
$\mathbf{S}_{n_j,j}^{(q,q',p)}$ under Condition~(\ref{e:idLT}),
when $j,n_j\to\infty$.

\begin{proposition}\label{pro:Snj110}
Suppose that Assumptions A hold with $M\geq K+\delta(1)=K+d$ and that $\gamma_j$
is even for all $j$. Let
$(n_j)$ be any diverging sequence of integers. Then as
$j\to\infty$,
\begin{equation}\label{e:normgauss}
n_j^{1/2}\gamma_j^{-2(d+K)}\mathbf{S}_{n_j,j}^{(1,1,0)}\overset{\tiny{(\mathcal{L})}}{\rightarrow}\mathcal{N}(0,\Gamma)\;,
\end{equation}
where $\Gamma$ is defined by~(\ref{e:Gamma}).
\end{proposition}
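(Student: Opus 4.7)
The plan is to recognise that $\mathbf{S}_{n_j,j}^{(1,1,0)}$ is nothing but the centered wavelet scalogram in the \emph{Gaussian} case $G=H_1$: by (\ref{e:Snjrrp}) with $(q,q',p)=(1,1,0)$, its $\ell$-th coordinate equals
\[
\frac{1}{n_j}\sum_{k=0}^{n_j-1}\bigl[(W_{\ell,j,k}^{(1)})^2-\mathbb{E}((W_{\ell,j,k}^{(1)})^2)\bigr],
\]
i.e. the centered empirical variance of the Gaussian wavelet coefficient $W_{\ell,j,k}^{(1)}$. The statement~(\ref{e:normgauss}) therefore coincides with the multivariate scalogram CLT for Gaussian long-memory inputs already obtained in~\cite{roueff-taqqu-2009}. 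The only thing that remains is to verify that the limiting covariance is indeed (\ref{e:Gamma}) and, should a self-contained proof be preferred, to apply the fourth-moment theorem to the second-chaos integral $\widehat{I}_2(\mathbf{g}_{n_j,j}^{(1,1,0)})$.

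For the covariance, I would combine the isometry for second-chaos integrals with the explicit kernel from~(\ref{e:Snjrrp}) and~(\ref{e:intrepKjp1bis}). The $(i,i')$-entry of $n_j\gamma_j^{-4(d+K)}\,\mathbb{E}[\mathbf{S}_{n_j,j}^{(1,1,0)}(\mathbf{S}_{n_j,j}^{(1,1,0)})^T]$ then becomes a double integral on $(-\pi,\pi)^2$ whose integrand is $|D_{n_j}(\gamma_j(\xi_1+\xi_2))|^2$ times $f(\xi_1)f(\xi_2)$ times the two Fourier factors $\mathbf{\widehat{h}}_{i,j}^{(K)}(\xi_1)\overline{\mathbf{\widehat{h}}_{i,j}^{(K)}(-\xi_2)}$ and $\mathbf{\widehat{h}}_{i',j}^{(K)}(\xi_1)\overline{\mathbf{\widehat{h}}_{i',j}^{(K)}(-\xi_2)}$. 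Since $n_j^{-1}|D_{n_j}|^2$ is the Fej\'er kernel, it concentrates on the grid $\xi_1+\xi_2\in(2\pi/\gamma_j)\mathbb{Z}$ (which is aligned with the fundamental domain $(-\pi,\pi]$ thanks to the evenness of $\gamma_j$); substituting $f(\lambda)\sim f^*(0)|\lambda|^{-2d}$ near~$0$, using the convergence of $\mathbf{\widehat{h}}_j^{(K)}(\gamma_j^{-1}\cdot)$ to $\widehat{\bh}_\infty$ described in Appendix~\ref{s:appendixB}, and rescaling by $\gamma_j$ produces, after summation over the aliased peaks, exactly the periodised kernel in~(\ref{e:Gamma}).

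For the CLT, since the rescaled random variable lies in the second Wiener chaos, the Peccati--Tudor multivariate fourth-moment criterion (recalled in Appendix~\ref{s:appendixA}) reduces Gaussianity of the limit to the convergence of the covariance matrix above together with the vanishing of the $L^2(\mathbb{R}^2)$-norm of the rescaled first-order contractions $\mathbf{g}_{n_j,j}^{(1,1,0)}\,\overline{\otimes}_1\,\mathbf{g}_{n_j,j}^{(1,1,0)}$. These contractions can be written as four-fold spectral integrals with two Dirichlet factors; one shows that they are $o(1)$ by the same Fej\'er-concentration argument as in the covariance step, the key point being that the two Dirichlet spike grids arising from the contracted kernels are offset by the contraction variable and therefore cannot simultaneously reinforce.

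The main obstacle is the spectral-domain analysis in the covariance step: carefully summing the Dirichlet replicas to extract the periodised kernel in~(\ref{e:Gamma}). The condition $M\geq K+d$ and the vanishing-moment property of $\widehat{\bh}_j$ are exactly what is needed to ensure that the series $\sum_{p\in\mathbb{Z}}|\lambda+2p\pi|^{-2(K+d)}[\widehat{h}_{i,\infty}\overline{\widehat{h}_{i',\infty}}](\lambda+2p\pi)$ is convergent and integrable on $(-\pi,\pi)$. Both the covariance asymptotics and the contraction bound follow the template developed in~\cite{moulines:roueff:taqqu:2007:jtsa,roueff-taqqu-2009} for the univariate linear case, and the generalisation to the multivariate setting is routine via the Cram\'er--Wold device.
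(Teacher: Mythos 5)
Your proposal is correct and takes essentially the same approach as the paper: the paper's entire proof is the observation that $\mathbf{S}_{n_j,j}^{(1,1,0)}$ is the centered scalogram of the purely Gaussian case $G=H_1$, disposed of by a direct citation of Theorem~3.1, case~(a), of \cite{clausel-roueff-taqqu-tudor-2011b} (rather than \cite{roueff-taqqu-2009}, though the two are closely related). Your additional self-contained sketch --- isometry plus Fej\'er-kernel concentration for the covariance, and the vanishing of the first-order contractions for the fourth-moment criterion --- is consistent with how the cited result is itself established, so nothing further is needed.
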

\begin{proof}
  This is a direct application of Theorem~3.1 case (a)
  in~\cite{clausel-roueff-taqqu-tudor-2011b}.
\end{proof}
We now consider the case where Condition~(\ref{e:idLT}) is satisfied.
\begin{proposition}\label{pro:lrd}
Let $q,q'$ and $p$ be non-negative integers such that~(\ref{e:idLT}) holds.
Assume that Assumptions A hold with $M\geq K$ and let $(n_j)$ be any diverging sequence of integers.
Then, as $j\to\infty$,
\begin{equation}\label{e:norma}
(n_j\gamma_j)^{1-\delta(q-p)-\delta(q'-p)}\gamma_j^{-2(K+\delta(p))}
\mathbf{S}_{n_j,j}^{(q,q',p)}
\overset{\tiny{(\mathcal{L})}}{\rightarrow} [f^*(0)]^{(q+q')/2}\;
\K_p \; \; Z_{q+q'-2p,d}(1)\;,
\end{equation}
where $Z_{q+q'-2p,d}$ is the Hermite process defined in~(\ref{e:harmros}) and
$\K_p$ is defined in~(\ref{e:defKp}).
\end{proposition}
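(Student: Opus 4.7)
My plan is to identify $\mathbf{S}_{n_j,j}^{(q,q',p)}$ with the multiple Wiener--It\^o integral given by~(\ref{e:Snjrrp}), to rescale via the self-similarity of the Gaussian white noise, and then to upgrade a pointwise limit of the rescaled kernel into an $L^2(\mathbb{R}^k)$ limit. Convergence in law then follows from the isometry of multiple integrals. Set $k=q+q'-2p$. Using the fact that for $c>0$ the map $h\mapsto h(c\,\cdot)$ rescales the $L^2$-norm by $c^{-k/2}$, we have
\begin{equation*}
  \mathbf{S}_{n_j,j}^{(q,q',p)}\stackrel{(d)}{=}(n_j\gamma_j)^{-k/2}\,\widehat{I}_k\bigl(\tilde{\mathbf g}_{n_j,j}\bigr),\qquad \tilde{\mathbf g}_{n_j,j}(u)=\mathbf{g}_{n_j,j}\!\left(u/(n_j\gamma_j)\right).
\end{equation*}
A direct computation shows that the normalization $(n_j\gamma_j)^{1-\delta(q-p)-\delta(q'-p)}\gamma_j^{-2(K+\delta(p))}$ appearing in~(\ref{e:norma}) is exactly $(n_j\gamma_j)^{-k/2}$ times $(n_j\gamma_j)^{-kd}\gamma_j^{-2(K+\delta(p))}$, since $1-\delta(q-p)-\delta(q'-p)=k(1/2-d)$.

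The task is therefore to show that $(n_j\gamma_j)^{-kd}\gamma_j^{-2(K+\delta(p))}\tilde{\mathbf g}_{n_j,j}$ converges in $L^2(\mathbb{R}^k)$ to the (vector-valued) Hermite kernel
\begin{equation*}
  \mathbf h_\infty(u)= [f^*(0)]^{(q+q')/2}\,\K_p\cdot\frac{\rme^{\rmi\Sigma_k u}-1}{\rmi\Sigma_k u}\prod_{i=1}^k|u_i|^{-d},
\end{equation*}
since $\widehat{I}_k(\mathbf h_\infty)=[f^*(0)]^{(q+q')/2}\,\K_p\,Z_{k,d}(1)$ by Definition~\ref{d:HP}. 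The pointwise limit rests on three elementary facts: (a) $D_{n_j}(\Sigma_k u/n_j)\to(\rme^{\rmi\Sigma_k u}-1)/(\rmi\Sigma_k u)$ as $n_j\to\infty$; (b) for each fixed $u_i\ne 0$, the form~(\ref{e:sdf}) and the continuity of $f^*$ at $0$ give $\sqrt{f(u_i/(n_j\gamma_j))}\sim\sqrt{f^*(0)}\,(n_j\gamma_j)^d|u_i|^{-d}$, producing the factor $[f^*(0)]^{k/2}(n_j\gamma_j)^{kd}\prod|u_i|^{-d}$; (c) the arguments of $\widehat{\bkappa}_j^{(p)}$ in~(\ref{e:bg}) are of order $1/n_j\to 0$, so combining a change of variables $\mu=\gamma_j\lambda$ in~(\ref{e:intrepKjp}) with the behavior of $f^*$ near $0$ and the asymptotic identity for $\widehat{\bh}_j^{(K)}$ from Appendix~\ref{s:appendixB} yields $\gamma_j^{-2(K+\delta(p))}\widehat{\bkappa}_j^{(p)}(O(1/n_j))\to[f^*(0)]^p\K_p$.

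The main technical hurdle is to promote this pointwise convergence to $L^2$ convergence via dominated convergence. I would construct a domination of the form
\begin{equation*}
  \bigl|(n_j\gamma_j)^{-kd}\gamma_j^{-2(K+\delta(p))}\tilde{\mathbf g}_{n_j,j}(u)\bigr|\le C\,\Phi(u),
\end{equation*}
by using the standard estimates $|D_n(v)|\le\min(1,c/(n|v|))$ (carefully handled across periods $2\pi$), the bound~(\ref{EqMajoHinf}) on $\widehat{\bh}_\infty$, and the semiparametric form of $f$. The condition~(\ref{e:idLT}) is precisely what ensures that the resulting envelope $\Phi$ belongs to $L^2(\mathbb{R}^k)$: the exponents of $|u_i|$ all lie in $(0,1/2)$ and the constraint $k<1/(1-2d)$ prevents divergence on the diagonal $\Sigma_k u=0$, exactly as in Lemma~5.1 of~\cite{clausel-roueff-taqqu-tudor-2011a} and as already used in the proof of Proposition~7.2 of~\cite{clausel-roueff-taqqu-tudor-2011b}. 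Once the $L^2$ convergence is in hand, the isometry $\mathbb{E}|\widehat{I}_k(h)|^2=k!\|\tilde h\|_2^2$ gives $L^2(\Omega)$ convergence, hence convergence in law, component by component in the multivariate wavelet index $\ell$.
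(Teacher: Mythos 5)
Your proposal follows essentially the same route as the paper's proof, which rescales $\mathbf{S}_{n,j}^{(q,q',p)}$ via the distributional self-similarity $\widehat{I}_r(g)\overset{d}{=}(n\gamma_j)^{-r/2}\widehat{I}_r(g(\cdot/(n\gamma_j)))$ and then establishes $L^2$ convergence of the rescaled kernel to the Hermite kernel, deferring the domination argument to Proposition~8.1 of~\cite{clausel-roueff-taqqu-tudor-2011b} with $r=q+q'-2p$ in place of $r=2$; your identification of the three pointwise limits and of the role of~(\ref{e:idLT}) for the $L^2$ envelope is exactly right. (Only a bookkeeping slip: since $1-\delta(q-p)-\delta(q'-p)=k(1/2-d)$, the stated normalization equals $(n_j\gamma_j)^{+k/2}$ times $(n_j\gamma_j)^{-kd}\gamma_j^{-2(K+\delta(p))}$, which combined with the $(n_j\gamma_j)^{-k/2}$ from the rescaling gives precisely the kernel normalization you then correctly work with.)
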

\begin{proof}
  The proof follows the same line as the proof of Proposition~8.1
  in~\cite{clausel-roueff-taqqu-tudor-2011b}. Therefore we only explain how to
  adapt this proof to our setting. Set $r=q+q'-2p$.
 Using~(\ref{e:Snjrrp}) and that, for all
  $g\in L^2(\mathbb{R}^r)$,
  $$
\widehat{I}_r(g)\overset{d}{=} (n\gamma_j)^{-r/2}
\widehat{I}_r(g(\cdot/(n\gamma_j)))\;,
$$
we have
\begin{equation}
  \label{eq:Ito-int-rep}
\mathbf{S}_{n,j}^{(q,q',p)}\overset{d}{=}
(n\gamma_j)^{-r/2}
\widehat{I}_{r}\left(
D_n\circ\Sigma_{q+q'-2p}(\cdot/n)\times
[\1_{(-\gamma_j\pi,\gamma_j\pi)}]^{\otimes r}(\cdot/n)
\times\bof_j\right)\;,
\end{equation}
where, for all $\xi\in\mathbb{R}^r$,
$$
\bof_j(n\gamma_j\xi)=
 \sqrt{f}^{\otimes r}(\xi) \times
 {\widehat{\bkappa}}_j^{(p)}\circ\Sigma_{q-p,q'-p}(\xi)\;.
$$
The rest of the proof consists in proving the $L^2$ convergence of the Itô
integral in~(\ref{eq:Ito-int-rep}), adequately normalized. This is done in the
proof of Proposition~8.1 in~\cite{clausel-roueff-taqqu-tudor-2011b} with
$q-p=q'-p=1$ (hence $r=2$).  The same proof applies in our setting but results
in a multiple integral of order $r$ with $r\geq2$. In particular, if $r>2$ the
asymptotic limit is not Rosenblatt but an $r$-order Hermite process.
\end{proof}
\section{Leading terms}\label{sec:LTSigma}
Recall the decomposition~(\ref{e:s3}) of $\overline{\bS}_{n,j}$ using sums
$\mathbf{\Sigma}_{n,j}^{(0)}$, $\mathbf{\Sigma}_{n,j}^{(1)}$,
$\mathbf{\Sigma}_{n,j}^{(2)}$ and $\mathbf{\Sigma}_{n,j}^{(3)}$. The aim of
this section is to identify the leading terms of the three following sums~:
$\mathbf{\Sigma}_{n_j,j}^{(0)}+\mathbf{\Sigma}_{n_j,j}^{(1)}$,
$\mathbf{\Sigma}_{n_j,j}^{(2)}$, $\mathbf{\Sigma}_{n_j,j}^{(3)}$ under the
conditions specified in Sections~\ref{s:main1} and~\ref{s:main23}.
\subsection{Leading term of $\mathbf{\Sigma}_{n_j,j}^{(0)}+\mathbf{\Sigma}_{n_j,j}^{(1)}$}\label{sec:LTSigma0}
Recall that the two sums
$\mathbf{\Sigma}_{n_j,j}^{(0)},\mathbf{\Sigma}_{n_j,j}^{(1)}$ are defined in
equations~(\ref{e:defSigma0}), (\ref{e:defSigma1}) and that $q_0^*$, defined
in~(\ref{e:q0star}), equals $q_1$ if $q_0=1$ and equals $q_0$
otherwise. Therefore $q_0\geq2$. We shall prove that, if $q_0^*<1/(1-2d)$, the
main term in $\mathbf{\Sigma}_{n_j,j}^{(0)}+\mathbf{\Sigma}_{n_j,j}^{(1)}$ is
$\frac{c_{q_0^*}^2}{(q_0^*-1)!}\mathbf{S}_{n,j}^{(q_0^*,q_0^*,q_0^*-1)}$,
and has rate $n_j^{-(1-2d)}\gamma_j^{2(\delta(q_0^*)+K)}$.  The following
proposition is used to show that the remainder terms are negligible.

\begin{proposition}\label{pro:intSigma0}
Suppose that Assumptions A hold with $M\geq K+\delta(q_0^*)$ and that
\[
q_0^*<1/(1-2d)\;,
\]
where $q_0^*$ is defined in~(\ref{e:q0star}). Let $(n_j)$ be a diverging
sequence.  Then, when $j\to\infty$,
\begin{gather}\label{e:supSigma0a}
n_j^{1-2d}\gamma_j^{-2(\delta(q_0^*)+K)}
\left(\sum_{p=0}^{q_0^*-2}\frac{c_{q_0^*}^2}{(q_0^*!)^2}
p!{{q_{0}^*}\choose{p}}^2 \,
\|\mathbf{S}_{n_j,j}^{(q_{0}^*,q_{0}^*,p)}\|_2\right)
\rightarrow 0\;,\\
\label{e:supSigma0b}
n_j^{1-2d}\gamma_j^{-2(\delta(q_0^*)+K)}
\left(\sum_{\ell\in\ellset,q_\ell>q_0^*}\frac{c_{q_\ell}^2}{(q_\ell
!)^2}\sum_{p=0}^{q_\ell-1} p!{{q_{\ell}}\choose{p}}^2\,
\|\mathbf{S}_{n_j,j}^{(q_{\ell},q_{\ell},p)}\|_2\right)\rightarrow
0\;,\\
\label{e:supSigma0c}
n_j^{1-2d}\gamma_j^{-2(\delta(q_0^*)+K)}
\left(\sum_{(\ell_1,\ell_2)\in
J}\frac{c_{q_{\ell_1}}}{q_{\ell_1}!}\frac{c_{q_{\ell_2}}}{q_{\ell_2}!}\sum_{p=0}^{q_{\ell_1}}
p!{{q_{\ell_1}}\choose{p}}{{q_{\ell_2}}\choose{p}}\,
\|\mathbf{S}_{n_j,j}^{(q_{\ell_1},q_{\ell_2},p)}\|_2\right)\rightarrow
0\;.
\end{gather}
\end{proposition}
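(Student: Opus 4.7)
The plan is to apply Corollary~\ref{cor:SD} to every summand $\|\mathbf{S}_{n_j,j}^{(q,q',p)}\|_2$ appearing in~(\ref{e:supSigma0a})--(\ref{e:supSigma0c}), yielding explicit upper bounds in terms of the exponents $\alpha(q,q',p)$ and $\beta(q,p)+\beta(q',p)$. Since the leading term at $(q,q',p) = (q_0^*, q_0^*, q_0^*-1)$ has rate precisely $n^{-(1-2d)}\gamma_j^{2(\delta(q_0^*)+K)}$, the task reduces to showing that, after multiplying by the inverse normalization $n_j^{1-2d}\gamma_j^{-2(\delta(q_0^*)+K)}$, each remaining summand tends to $0$ and that the infinite series are absolutely convergent uniformly in $j$. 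The arithmetic backbone is the identity
\[
\delta(p)+\delta(q-p)-\tfrac{1}{2} = \delta(q),
\]
which, under $q_0^*<1/(1-2d)$, gives $\beta(q_0^*, p)=\delta(q_0^*)$ throughout $p\in\{0,\ldots,q_0^*-1\}$.

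For~(\ref{e:supSigma0a}) (a finite sum), the above identity shows that the $\gamma_j$-exponent in the bound saturates at the target for every admissible $p$, so the decay must come from the $\alpha$-exponent. Strict monotonicity of $\delta$ (guaranteed by $d<1/2$), combined with $q_0^*-p\geq 2$ when $p\leq q_0^*-2$, implies $\alpha(q_0^*,q_0^*,p)>1-2d$, and hence $n_j^{1-2d-\alpha(q_0^*,q_0^*,p)}(\log n_j)^{\varepsilon(\cdot)}\to 0$. The inequality $d>1/4$ needed to conclude $\alpha(q_0^*,q_0^*,q_0^*-1)=1-2d$ (rather than $1/2$) at the leading index is automatic from $q_0^*\geq 2$ together with $q_0^*<1/(1-2d)$.

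For the two infinite sums~(\ref{e:supSigma0b}) and~(\ref{e:supSigma0c}), the $\gamma_j$-factor decays because $\beta(q,p)\leq\delta_+(q)$ and $\delta_+$ is non-increasing, with $\delta_+(q)<\delta(q_0^*)$ whenever $q>q_0^*$ (this already absorbs the short-range regime where $\delta_+=0$). Absolute summability in $\ell$ is then handled via the Hermite coefficient bound~(\ref{e:condcv}): combining $c_q^2 = O((q!)^{2d}e^{-2\lambda q})$ with the combinatorial factor $p!\binom{q}{p}^2/(q!)^2$, with $\Lambda_2(q-p,p)$, and with the $C^q$ from Corollary~\ref{cor:SD}, one obtains a summand bounded by $((q-p)!)^{-1}(p!)^{-2d}e^{-(2\lambda - O(1))q}$ (up to harmless polynomial factors, after using $\binom{q}{p}^{2d}\leq 2^{2dq}$), which is summable for $\lambda$ chosen sufficiently large. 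The same scheme applies to~(\ref{e:supSigma0c}) after using~(\ref{e:condcv}) on the product $c_{q_{\ell_1}}c_{q_{\ell_2}}$ and summing over the two-parameter index set $J$.

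The main obstacle will be the careful bookkeeping of the $\alpha,\beta$ exponents across the threshold $q=1/(1-2d)$ that separates the long-range from the short-range regime, especially in~(\ref{e:supSigma0c}) where the pair $(q_{\ell_1},q_{\ell_2})$ may straddle this threshold and force one to split the analysis into several sub-cases depending on whether each of $p$, $q-p$, $q'-p$ lies in the long- or short-range regime. The logarithmic factors from $\varepsilon(\cdot)$ only come into play at the discrete resonances $s(1-2d)=1$ and are absorbed by any polynomial slack available in the exponents.
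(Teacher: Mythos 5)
Your proposal is correct and follows essentially the same route as the paper: apply Corollary~\ref{cor:SD} termwise, control the exponents via $\beta(q,p)\leq\delta_+(q)$ (with saturation $\beta(q_0^*,p)=\delta(q_0^*)$ forcing the decay in~(\ref{e:supSigma0a}) to come from $\alpha(q_0^*,q_0^*,p)>1-2d$, and strict decay in $\gamma_j$ for~(\ref{e:supSigma0b})--(\ref{e:supSigma0c}) since $\delta_+(q)<\delta(q_0^*)$ for $q>q_0^*$), and obtain absolute summability of the infinite sums from~(\ref{e:condcv}) exactly as in Lemma~\ref{lem:c_q-cond}. The case split you anticipate (notably $p=q_\ell-1$ versus $p<q_\ell-1$, and $p=q_{\ell_1}$ with $q_{\ell_2}=q_{\ell_1}+2$ versus the rest, where the $n$-exponent is only non-positive and the $\gamma_j$ factor must carry the convergence) is precisely the bookkeeping the paper carries out.
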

\begin{proof}
We first note that, since $q_0^*\geq2$ by definition and $q_0^*<1/(1-2d)$ by
assumption, we have $1/4<d<1/2$.

As in the proofs of Proposition~\ref{pro:decompSnjLD}, we prove the result in the case
where $m=1$ without loss of generality. We thus use non bold faced symbols.

We first prove~(\ref{e:supSigma0a}). Since there is a finite number of terms in
the sum appearing on the left-hand side of~(\ref{e:supSigma0a}), it is
sufficient to show that each term converges to 0. Let
$p\in\{0,\dots,q^*_0-2\}$. We apply Corollary~\ref{cor:SD} with
$q=q'=q_0^*$. Since $q_0^*<1/(1-2d)$ and thus $\varepsilon(q_0^*)=0$,
Inequality~(\ref{e:boundL2P51ab}) reads
\begin{equation*}
\gamma_j^{-2(\delta(q_0^*)+K)}
\|S_{n,j}^{(q_0^*,q_0^*,p)}\|_2\leq
C^{q_0^*}\Lambda_2(q_0^*-p,p)
n^{-\alpha(q_0^*,q_0^*,p)}\left(\log n\right)^{\varepsilon(2(q_0^*-p))}
\gamma_j^{2(\beta(q_0^*,p)-\delta(q_0^*))}\;.
\end{equation*}
By~(\ref{e:expnaqq}) and~(\ref{e:expj}) in Lemma~\ref{lem:exp}, we have
$\alpha(q_0^*,q_0^*,p)\geq\min(2(1-2d),1/2)$ and
$\beta(q_0^*,p)\leq\delta_+(q_0^*)=\delta(q_0^*)$. Hence,
$$
\gamma_j^{-2(\delta(q_0^*)+K)}\|S_{n,j}^{(q_0^*,q_0^*,p)}\|_2
\leq C^{q_0^*}\Lambda_2(q_0^*-p,p)
n^{-\min(2(1-2d),1/2)}\left(\log n\right)^{\varepsilon(2(q_0^*-p))}\;.
$$
Since $d\in (1/4,1/2)$, we have
$n^{-\min(2(1-2d),1/2)}(\log n)^{\varepsilon(2(q_0^*-p))}=o(n^{2d-1})$.
Thus Inequality~(\ref{e:supSigma0a}) holds.

We now prove~(\ref{e:supSigma0b}). We apply~(\ref{e:boundL2P51ab}), in the cases
$q=q'=q_\ell$, $0\leq p\leq q_\ell-2$ and $q=q'=q_\ell$, $p=q_{\ell}-1$,
successively.  Then for some $C>0$, for any $\ell\in\ellset$ such that
$q_{\ell}>q_0^*$ and for any $0\leq p\leq q_\ell-2$ one has,
\[
\gamma_j^{-2(\delta(q_0^*)+K)}\|S_{n,j}^{(q_\ell,q_\ell,p)}\|_2\leq
C^{q_\ell}\Lambda_2(q_\ell-p,p)
n^{-\alpha(q_\ell,q_\ell,p)}\left(\log n\right)^{\varepsilon(2(q_{\ell}-p))}
\gamma_j^{2(\beta(q_\ell,p)-\delta(q_0^*))}(\log\gamma_j)^3\;.
\]
On the other hand, since $d>1/4$, $\varepsilon(2)=0$. Thus
in the case where $p=q_\ell-1$, the exponent of $\log n$ vanishes.
Moreover, in this case, by~(\ref{e:expnaqqbis}) in Lemma~\ref{lem:exp},
$\alpha(q_\ell,q_\ell,p)=\min(1-2d,1/2)=1-2d$. In the alternative case
$p<q_\ell-1$, we use~(\ref{e:expnaqq}) in Lemma~\ref{lem:exp}, which gives
$$
n^{-\alpha(q_\ell,q_\ell,p)}\left(\log n\right)^{\varepsilon(2(q_{\ell}-p))}
\leq n^{-\min(2(1-2d),1/2)}\left(\log n\right)^{\varepsilon(2(q_{\ell}-p))}
\leq n^{2d-1}\;,
$$
for $n$ large enough, since $2(1-2d)>1-2d$ and $1/2>1-2d$.
Hence in all the cases, the terms in $n$ can be bounded by $C'\,n^{2d-1}$.
As for the terms in $\gamma_j$, we use that, by ~(\ref{e:expj}) in
Lemma~\ref{lem:exp}, $\beta(q_\ell,p)\leq
\delta_+(q_\ell)\leq\delta_+(q_0^*+1)$, since $q_\ell\geq q^*_0+1$ and
$\delta_+$ is non-increasing. Hence we get that
$$
\gamma_j^{-2(\delta(q_0^*)+K)}\|S_{n,j}^{(q_\ell,q_\ell,p)}\|_2\leq
C^{q_\ell}\Lambda_2(q_\ell-p,p)n^{2d-1}\;
\gamma_j^{2(\delta_+(q_0^*+1)-\delta(q_0^*))}(\log\gamma_j)^3\;,
$$
where $C>0$ may have changed from the previous line. Hence, summing over
$\ell\in\ellset$ such that $q_\ell>q_0^*$ and all $p\in\{0,\dots,q_\ell-1\}$,
we get that
\begin{multline}\label{eq:cqlambda-Lambda2}
n^{1-2d}\gamma_j^{-2(\delta(q_0^*)+K)}
\left(\sum_{q_\ell>q_0^*}\sum_{p=0}^{q_\ell-1}\frac{c_{q_\ell}^2}{(q_\ell
!)^2} p!{{q_{\ell}}\choose{p}}^2 \,
\|S_{n,j}^{(q_{\ell},q_{\ell},p)}\|_2\right)\\
\leq
\gamma_j^{2(\delta_+(q_{0}^*+1)-\delta(q_0^*))}(\log\gamma_j)^3
\sum_{\ell=1}^{+\infty}\frac{c_{q_\ell}^2}{(q_\ell !)^2}\,C^{q_\ell}
\sum_{p=0}^{q_\ell-1} p!\,
{{q_{\ell}}\choose{p}}^2 \Lambda_2(q_\ell-p,p)\;.
\end{multline}
Observe now that
\[
\sum_{\ell=1}^{+\infty}\frac{c_{q_\ell}^2}{(q_\ell !)^2}\,C^{q_\ell}\sum_{p=0}^{q_\ell-1} p!\,
{{q_{\ell}}\choose{p}}^2 \Lambda_2(q_\ell-p,p)\leq\sum_{q,q'\geq 1}\sum_{p=0}^{q\wedge q'}\frac{|c_q|}{q!}
\frac{|c_{q'}|}{q'!} \; p!\;
{{q}\choose{p}}{{q'}\choose{p}}C^{\frac{q+q'}{2}}\Lambda_2(q-p,p)^{1/2}\Lambda_2(q'-p,p)^{1/2}
\]
Since Condition~(\ref{e:condcv}) holds, Lemma~\ref{lem:c_q-cond} implies that~:
\begin{equation}\label{e:condconv0b}
\sum_{\ell=1}^{+\infty}\frac{c_{q_\ell}^2}{(q_\ell !)^2}\,C^{q_\ell}\sum_{p=0}^{q_\ell-1} p!\,
{{q_{\ell}}\choose{p}}^2 \Lambda_2(q_\ell-p,p)<\infty\;.
\end{equation}
Finally we observe that, since
$\delta(q_{0}^*)>\delta(q_0^*+1)$ and $\delta(q_{0}^*)>0$, we have
\begin{equation}\label{e:conv0b}
\gamma_j^{2(\delta_+(q_{0}^*+1)-\delta(q_0^*))}(\log\gamma_j)^3\to0\;,
\end{equation}
as $\gamma_j\to\infty$. Hence~(\ref{e:condconv0b}) and~(\ref{e:conv0b}) imply that Inequality~(\ref{e:supSigma0b}) holds.

We finally prove that~(\ref{e:supSigma0c})
holds. Inequality~(\ref{e:boundL2P51ab}) for $(\ell_1,\ell_2)\in J$ with
$q=q_{\ell_1}, q'=q_{\ell_2}$ and $p\leq q_{\ell_1}$ implies that
\begin{multline}\label{eq:supsigma0c-inter1}
\gamma_j^{-2(\delta(q_0^*)+K)}\|S_{n,j}^{(q_{\ell_1},q_{\ell_2},p )}\|_2
\leq
C^{\frac{q_{\ell_1}+q_{\ell_2}}{2}}\Lambda_2(q_{\ell_1}-p,p)^{1/2}\Lambda_2(q_{\ell_2}-p,p)^{1/2}\\
\times n^{-\alpha(q_{\ell_1},q_{\ell_2},p)}\left(\log n\right)^{\varepsilon(q_{\ell_1}+q_{\ell_2}-2p)}
\gamma_j^{\beta(q_{\ell_1},p)+\beta(q_{\ell_2},p)-2\delta(q_0^*)}
(\log \gamma_j)^3\;.
\end{multline}
We first bound the terms that depend on $n$. First suppose that $p=q_{\ell_1}$
and $q_{\ell_2}=q_{\ell_1}+2$. In this case, the exponent of $\log n$ vanishes,
since $\varepsilon(2)=0$ for $d>1/4$, and by~(\ref{e:expna}) in
Lemma~\ref{lem:exp}, the exponent of $n$
$\alpha(q_{\ell_1},q_{\ell_2},p)\geq1-2d$. Hence, in this case, the terms in
$n$ are bounded by $n^{2d-1}$. Otherwise, if $p<q_{\ell_1}$ or
$q_{\ell_2}>q_{\ell_1}+2$, we observe that for $(\ell_1,\ell_2)\in J$, we have
$p\leq q_{\ell_2}-3$ and hence, by definition of $\alpha$ in~(\ref{e:alpha})
and since $\delta_+$ is non-increasing,
\[
\alpha(q_{\ell_1},q_{\ell_2},p)\geq 1/2-\delta^+(3)=
\min(1/2,1/2-(3d-1))>1-2d \;,
\]
since $1/4<d<1/2$. Whatever the exponent of $\log n$, we again obtain
that  the terms in $n$ are bounded by $n^{2d-1}$, up to a multiplicative
constant:
\begin{equation}\label{e:boundn}
\sup_{(\ell_1,\ell_2)\in J,0\leq p\leq q_{\ell_1}}
n^{-\alpha(q_{\ell_1},q_{\ell_2},p)}\left(\log n\right)^{\varepsilon(q_{\ell_1}+q_{\ell_2}-2p)}=O\left(n^{2d-1}\right)\;.
\end{equation}
We now bound the terms that depend on $\gamma_j$
in~(\ref{eq:supsigma0c-inter1}).
By~(\ref{e:expj}) in Lemma~\ref{lem:exp}, we have $\beta(q,p)\leq \delta_+(q)$
for $0\leq p\leq q$. Thus
$\beta(q_{\ell_1},p)+\beta(q_{\ell_2},p)-\delta(q_0^*)\leq
\delta_+(q_{\ell_1})+\delta_+(q_{\ell_2})-2\delta(q_0^*)$.
Since $\delta$ is non--increasing,  $q_{\ell_1}\geq q_0^*$ and $q_{\ell_2}\geq
q_{\ell_1}+2$ we deduce that $\delta_+(q_{\ell_1})\leq
\delta_+(q_0^*)=\delta(q_0^*)$ and $\delta_+(q_{\ell_2})\leq
\delta_+(q_0^*+2)<\delta(q_0^*)$. Hence the exponent of $\gamma_j$ is bounded
by a negative constant and
\begin{equation}\label{e:boundgammaj}
\sup_{(\ell_1,\ell_2)\in J,0\leq p\leq q_{\ell_1}}
\gamma_j^{\beta(q_{\ell_1},p)+\beta(q_{\ell_2},p)-2\delta(q_0^*)}(\log
\gamma_j)^3\to0 \quad\text{as $j\to\infty$}\;.
\end{equation}
In view of~(\ref{eq:supsigma0c-inter1}),~(\ref{e:boundn})
and~(\ref{e:boundgammaj}), the proof of~(\ref{e:supSigma0c}) follows from the
bound
\begin{equation}\label{e:supsigma0c-inter2}
\sum_{(\ell_1,\ell_2)\in
J}\frac{|c_{q_{\ell_1}}|}{q_{\ell_1}!}\frac{|c_{q_{\ell_2}}|}{q_{\ell_2}!}C^{\frac{q_{\ell_1}+q_{\ell_2}}{2}}
\sum_{p=0}^{q_{\ell_1}}p!\,(2\pi)^p\,
\prod_{i=1}^2{{q_{\ell_i}}\choose{p}}
[\Lambda_2(q_{\ell_i}-p,p)]^{1/2} < \infty \;,
\end{equation}
which follows from the inequality
\begin{eqnarray*}
&&\sum_{(\ell_1,\ell_2)\in
J}\frac{|c_{q_{\ell_1}}|}{q_{\ell_1}!}\frac{|c_{q_{\ell_2}}|}{q_{\ell_2}!}C^{\frac{q_{\ell_1}+q_{\ell_2}}{2}}
\sum_{p=0}^{q_{\ell_1}}p!\,(2\pi)^p\,
\prod_{i=1}^2{{q_{\ell_i}}\choose{p}}
[\Lambda_2(q_{\ell_i}-p,p)]^{1/2}\\
&\leq&\sum_{q,q'\geq 1}\sum_{p=0}^{q\wedge q'}\frac{|c_q|}{q!}
\frac{|c_{q'}|}{q'!} \; p!\;
{{q}\choose{p}}{{q'}\choose{p}}(2\pi C)^{\frac{q+q'}{2}}\Lambda_2(q-p,p)^{1/2}\Lambda_2(q'-p,p)^{1/2}
\end{eqnarray*}
and from Lemma~\ref{lem:c_q-cond} with Condition~(\ref{e:condcv}). This concludes the proof.
\end{proof}
We now focus on the leading term of the sum
$\mathbf{\Sigma}_{n_j,j}^{(0)}+\mathbf{\Sigma}_{n_j,j}^{(1)}$.
\begin{proposition}\label{pro:Sigma0}
  Under the same assumptions as Proposition~\ref{pro:intSigma0}, we have, as
  $j\to\infty$,
\begin{equation}\label{e:PrepSigmanj01result}
n_j^{1-2d}\gamma_j^{-2(\delta(q_0^*)+K)}\left(\mathbf{\Sigma}_{n_j,j}^{(0)}+\mathbf{\Sigma}_{n_j,j}^{(1)}\right)
\overset{\tiny{(\mathcal{L})}}{\longrightarrow}\left[\frac{c_{q_0^*}^2}{(q_0^*-1)!}f^*(0)^{q_0^*}\K_{q_0^*-1}\right]
Z_{2,d}(1)\;.
\end{equation}\end{proposition}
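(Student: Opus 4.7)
The strategy is to isolate the single component
$T_j := \frac{c_{q_0^*}^2}{(q_0^*-1)!}\,\mathbf{S}_{n_j,j}^{(q_0^*,q_0^*,q_0^*-1)}$
inside the decomposition~(\ref{e:defSigma0})--(\ref{e:defSigma1}), show that everything else, once rescaled by $n_j^{1-2d}\gamma_j^{-2(\delta(q_0^*)+K)}$, tends to $0$ in $L^1$, and identify the limit of the rescaled $T_j$ via Proposition~\ref{pro:lrd}. A Slutsky-type argument then gives~(\ref{e:PrepSigmanj01result}).

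First I would split $\mathbf{\Sigma}_{n_j,j}^{(0)}+\mathbf{\Sigma}_{n_j,j}^{(1)}$ as $T_j + R_j^{(0)} + R_j^{(0,*)} + R_j^{(1)}$, where $T_j$ is the term of~(\ref{e:defSigma0}) corresponding to $\ell$ with $q_\ell = q_0^*$ and $p = q_0^*-1$, $R_j^{(0)}$ collects the remaining terms of~(\ref{e:defSigma0}) with $q_\ell = q_0^*$ (so $p\leq q_0^*-2$), $R_j^{(0,*)}$ collects the terms of~(\ref{e:defSigma0}) with $q_\ell > q_0^*$, and $R_j^{(1)}=\mathbf{\Sigma}_{n_j,j}^{(1)}$. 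The three estimates~(\ref{e:supSigma0a})--(\ref{e:supSigma0c}) of Proposition~\ref{pro:intSigma0}, combined with the triangle inequality on the $L^2$ norm, yield directly that $n_j^{1-2d}\gamma_j^{-2(\delta(q_0^*)+K)}(R_j^{(0)}+R_j^{(0,*)}+R_j^{(1)})\to0$ in $L^2$, hence in probability.

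Next I would treat the leading term by checking that the triplet $(q,q',p)=(q_0^*,q_0^*,q_0^*-1)$ satisfies the condition~(\ref{e:idLT}): $p=q_0^*-1\geq1$ since $q_0^*\geq2$; $q-p=q'-p=1<1/(1-2d)$; $q=q'=q_0^*<1/(1-2d)$ by assumption; and $q+q'-2p=2<1/(1-2d)$ since $q_0^*\geq2<1/(1-2d)$ forces $d>1/4$. Proposition~\ref{pro:lrd} then gives
\begin{equation*}
(n_j\gamma_j)^{1-2d}\gamma_j^{-2(K+\delta(q_0^*-1))}\mathbf{S}_{n_j,j}^{(q_0^*,q_0^*,q_0^*-1)}
\overset{(\mathcal{L})}{\longrightarrow}[f^*(0)]^{q_0^*}\K_{q_0^*-1}Z_{2,d}(1)\;.
\end{equation*}
The identity $\delta(q_0^*)-\delta(q_0^*-1)=d-1/2$ shows that the normalizing factor $(n_j\gamma_j)^{1-2d}\gamma_j^{-2(K+\delta(q_0^*-1))}$ equals $n_j^{1-2d}\gamma_j^{-2(K+\delta(q_0^*))}$, so the scaling matches the one in~(\ref{e:PrepSigmanj01result}). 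Multiplying by the scalar $c_{q_0^*}^2/(q_0^*-1)!$ delivers the claimed limit for $n_j^{1-2d}\gamma_j^{-2(\delta(q_0^*)+K)}T_j$, and Slutsky's theorem then concludes.

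The only non-routine point in this plan is verifying that the scaling exponents line up exactly—both the $\gamma_j$ exponent (which requires the computation of $\delta(q_0^*)-\delta(q_0^*-1)$) and the fact that Proposition~\ref{pro:intSigma0} does dominate \emph{every} off-leading contribution uniformly in $\ell$, including the infinite tail over $\ellset$. The latter is actually the hardest work, but it has already been done inside Proposition~\ref{pro:intSigma0} through the summability estimate~(\ref{e:condconv0b}) stemming from Assumption~(\ref{e:condcv}); here we only need to invoke it.
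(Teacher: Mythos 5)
Your proposal is correct and follows essentially the same route as the paper's proof: isolate the term $(q,q',p)=(q_0^*,q_0^*,q_0^*-1)$, kill the remainders via the three bounds of Proposition~\ref{pro:intSigma0}, apply Proposition~\ref{pro:lrd} to the leading term after checking~(\ref{e:idLT}), and match the exponents through $\delta(q_0^*)-\delta(q_0^*-1)=d-1/2$ (equivalently, the paper's identity $2\delta(1)+2\delta(q_0^*-1)-1=2\delta(q_0^*)$). The exponent bookkeeping and the verification of~(\ref{e:idLT}) are both accurate, so no gap remains.
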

\begin{proof}
We apply Proposition~\ref{pro:lrd} with $q=q'=q_0^*$ and
$p=q_{0}^*-1$. Since
\[
2\delta(1)+2\delta(q_0^*-1)-1=2\delta(q_0^*)\;,
\]
we get that
\begin{equation}\label{e:PrepSigmanj01}
n_j^{1-2d}\gamma_j^{-2(\delta(q_0^*)+K)}
\frac{c_{q_0^*}^2\,(2\pi)^{q^*_0-1}}{(q_0^*-1)!}\mathbf{S}_{n,j}^{(q_0^*,q_0^*,q_0^*-1)}
\overset{\tiny{(\mathcal{L})}}{\rightarrow}
\left[\frac{c_{q_0^*}^2}{(q_0^*-1)!}f^*(0)^{q_0^*}
\K_{q_0^*-1}\right]Z_{2,d}(1)\;.
\end{equation}
The left-hand side in~(\ref{e:PrepSigmanj01}) corresponds to the term
$q_{\ell}=q^*_0$ and $p=q^*_0-1$ of $\mathbf{\Sigma}_{n_j,j}^{(0)}$
in~(\ref{e:defSigma0}). The terms of  $\mathbf{\Sigma}_{n_j,j}^{(0)}$ with
$q_{\ell}=q^*_0$ and $p<q^*_0-1$ are gathered in the left-hand side
of~(\ref{e:supSigma0a}). The terms  of  $\mathbf{\Sigma}_{n_j,j}^{(0)}$ with
$q_{\ell}>q^*_0$ are gathered in the left-hand side
of~(\ref{e:supSigma0b}). Finally the left-hand side of~(\ref{e:supSigma0c})
corresponds to $\mathbf{\Sigma}_{n_j,j}^{(1)}$ in~(\ref{e:defSigma0}).
Hence, by Proposition~\ref{pro:intSigma0}, all these terms are negligible
and~(\ref{e:PrepSigmanj01result}) holds.
\end{proof}
\subsection{Leading term of $\mathbf{\Sigma}_{n_j,j}^{(2)}$}\label{sec:LTSigma2}
In this section, we investigate the asymptotical behavior of the sum
$\mathbf{\Sigma}_{n_j,j}^{(2)}$ defined in~(\ref{e:defSigma2}). We shall prove
that, if $q_{m_0}<1/(1-2d)$, the leading term of this sum is $ \frac{c_1
  c_{q_{m_0}}}{(q_{m_0}-1)!}\mathbf{S}_{n,j}^{(1,q_{m_0},1)}$ and has rate
$n_j^{-(1-2\delta(q_{m_0}-1))/2}\gamma_j^{\delta(q_{m_0})+d+2K}$. To this end
we first show that the remainder terms are negligible.
\begin{proposition}\label{pro:intSigma2}
Assume that Assumptions A hold with $M\geq K+d$ and that
\[
q_{m_0}<1/(1-2d)\;,
\]
where $q_{m_0}$ is defined by~(\ref{e:m0}).

Let $(n_j)$ be a diverging sequence. Then, as
$j\to\infty$,
\begin{equation}\label{e:supSigma2a}
n_j^{1/2-\delta(q_{m_0}-1)}\gamma_j^{-\delta(q_{m_0})-d-2K}
\left(\sum_{\ell\geq m_0}\frac{c_1 c_{q_{\ell}}}{q_{\ell}!}\|\mathbf{S}_{n_j,j}^{(1,q_{\ell},0)}\|_2\right)\rightarrow
0\;,
\end{equation}
\begin{equation}\label{e:supSigma2b}
n_j^{1/2-\delta(q_{m_0}-1)}\gamma_j^{-\delta(q_{m_0})-d-2K}
\left(\sum_{\ell>m_0}
\frac{c_1c_{q_{\ell}}}{(q_{\ell}-1)!}
\|\mathbf{S}_{n_j,j}^{(1,q_{\ell},1)}\|_2\right)\rightarrow
0\;.
\end{equation}
\end{proposition}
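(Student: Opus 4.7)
The plan is to follow exactly the strategy of Proposition~\ref{pro:intSigma0}: bound each summand using Proposition~\ref{pro:SD}, multiply by the normalization factor in front of the sum, verify that the resulting factor tends to zero while growing at most polynomially in $q_\ell$, and use the Hermite-coefficient decay~(\ref{e:condcv}) together with Lemma~\ref{lem:c_q-cond} to control the infinite series over $\ell$.

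For~(\ref{e:supSigma2a}), I would apply Proposition~\ref{pro:SD}\ref{item:pro:SD1ii} with $(q,q',p)=(1,q_\ell,0)$. Since $\delta_+(1)=d$, this gives
$$\|\mathbf{S}_{n_j,j}^{(1,q_\ell,0)}\|_2\leq C^{(1+q_\ell)/2}\Lambda_1(q_\ell)^{1/2}\,n_j^{-1/2}\gamma_j^{2K+d+\delta_+(q_\ell)}(\log\gamma_j)^{\varepsilon(q_\ell)}\;.$$
Multiplied by $n_j^{1/2-\delta(q_{m_0}-1)}\gamma_j^{-\delta(q_{m_0})-d-2K}$, this simplifies to $n_j^{-\delta(q_{m_0}-1)}\gamma_j^{\delta_+(q_\ell)-\delta(q_{m_0})}(\log\gamma_j)^{\varepsilon(q_\ell)}$. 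The assumption $q_{m_0}<1/(1-2d)$ with $q_{m_0}\geq3$ yields $\delta(q_{m_0}-1)>0$, so $n_j^{-\delta(q_{m_0}-1)}\to 0$. For $\ell>m_0$, since $q_\ell\geq q_{m_0}+1$, the monotonicity of $\delta_+$ gives $\delta_+(q_\ell)\leq\delta_+(q_{m_0}+1)<\delta(q_{m_0})$, so the $\gamma_j$ exponent is strictly negative and absorbs the log factor. For the single remaining term $\ell=m_0$, the $\gamma_j$ exponent is zero, and the factor $n_j^{-\delta(q_{m_0}-1)}$ alone ensures convergence (modulo an eventual $\log\gamma_j$ that must be handled separately in the critical case $\varepsilon(q_{m_0})=1$).

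For~(\ref{e:supSigma2b}), I would apply Proposition~\ref{pro:SD}\ref{item:pro:SD1i} with $(q,q',p)=(1,q_\ell,1)$. A direct computation from~(\ref{e:alpha})--(\ref{e:beta}) gives $\alpha(1,q_\ell,1)=1/2-\delta_+(q_\ell-1)$, $\beta(1,1)=d$ and $\beta(q_\ell,1)=\delta_+(q_\ell)$. Applying the normalization then produces a factor proportional to $n_j^{\delta_+(q_\ell-1)-\delta(q_{m_0}-1)}\gamma_j^{\delta_+(q_\ell)-\delta(q_{m_0})}$ (up to log terms). Since $\ell>m_0$ means $q_\ell-1\geq q_{m_0}$ and $q_\ell>q_{m_0}$, the monotonicity of $\delta_+$ together with $\delta(q_{m_0}-1)>\delta(q_{m_0})$ makes both exponents strictly negative, absorbing the logs.

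Finally, to interchange the bounds with the infinite summation over $\ell$, I would mimic the end of the proof of Proposition~\ref{pro:intSigma0}: bound $|c_1 c_{q_\ell}|/q_\ell!$ and the combinatorial factors, incorporate $\Lambda_1(q_\ell)^{1/2}=(q_\ell!)^{(1-2d)/2}$ or its two-variable analogue, and dominate the resulting series by one of the form treated by Lemma~\ref{lem:c_q-cond} under~(\ref{e:condcv}). The main obstacle is the borderline case $\ell=m_0$ in~(\ref{e:supSigma2a}), where the $\gamma_j$ exponent in the upper bound vanishes: the argument then collapses to showing that $n_j^{-\delta(q_{m_0}-1)}(\log\gamma_j)^{\varepsilon(q_{m_0})}\to 0$, which is automatic when $\varepsilon(q_{m_0})=0$ and otherwise requires invoking the sharper bound in Proposition~\ref{pro:SD}\ref{item:pro:SD1ii} (which avoids the $(\log n)$ and higher-power $(\log\gamma_j)$ factors of the corollary) together with $n_j\to\infty$.
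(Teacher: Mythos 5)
Your proposal is correct and follows essentially the same route as the paper: apply the $L^2$ bounds of Proposition~\ref{pro:SD}/Corollary~\ref{cor:SD} with $(q,q',p)=(1,q_\ell,0)$ and $(1,q_\ell,1)$, check that after normalization the exponents of $n_j$ and $\gamma_j$ are (uniformly) negative, and sum over $\ell$ using the decay~(\ref{e:condcv}). The only loose end --- your ``critical case'' $\varepsilon(q_{m_0})=1$ at $\ell=m_0$ --- is vacuous, since the hypothesis $q_{m_0}<1/(1-2d)$ forces $\varepsilon(q_{m_0})=0$, which is exactly how the paper disposes of it; note that your proposed fallback via Proposition~\ref{pro:SD}\ref{item:pro:SD1ii} would not help in that hypothetical case, as that bound still carries a $(\log\gamma_j)^{\varepsilon(q')}$ factor.
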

\begin{proof}
  Observe that $\delta_+(1)=d$. We apply~(\ref{e:boundL2P51ab}) in
  Corollary~\ref{pro:SD} with $q=1$ and $q'=q_{\ell}$. Thus there exists some
  $C>0$ such that for any $\ell\geq m_0$
  \begin{equation}
    \label{eq:p0m01somme}
    \gamma_j^{-\delta(q_{m_0})-d-2K}
\|S_{n,j}^{(1,q_\ell,0)}\|_2
\leq C^{\frac{q_{\ell}+1}{2}}(q_{\ell}!)^{1/2-d}\,
n^{-1/2}\gamma_j^{\delta_+(q_{\ell})-\delta(q_{m_0})}
(\log\gamma_j)^{3\varepsilon(q_{\ell})}\;.
  \end{equation}
Since by assumption $q_{m_0}<1/(1-2d)$, we have $\varepsilon(q_{m_0})=0$ and
$\delta_+(q_{m_0})=\delta(q_{m_0})$.
Thus, if $\ell=m_0$, the terms involving $\gamma_j$ vanish in the right-hand side
of~(\ref{eq:p0m01somme}). If $\ell>m_0$, we have
$\delta_+(q_{\ell})<\delta(q_{m_0})$ and these terms are $o(1)$ as
$j\to\infty$. Hence, for $j$ large enough, and for any $\ell\geq m_0$,
\[
n_j^{1/2-\delta(q_{m_0}-1)}\gamma_j^{-\delta(q_{m_0})-d-2K}
\|S_{n_j,j}^{(1,q_\ell,0)}\|_2
\leq C^{\frac{q_{\ell}+1}{2}}\,(q_{\ell}!)^{1/2-d}\,
n_j^{-\delta(q_{m_0}-1)}\;.
\]
Using that
$\delta(q_{m_0}-1)>\delta(q_{m_0})>0$, and that, by Condition~(\ref{e:condcv}),
\[
\sum_{\ell=m_0}^{+\infty}C^{\frac{q_{\ell}+1}{2}}
\,|c_{q_{\ell}}|\,(q_{\ell}!)^{-1/2-d}<+\infty\;,
\]
we obtain the limit~(\ref{e:supSigma2a}).

We now show that~(\ref{e:supSigma2b}) holds. Applying~(\ref{e:boundL2P51ab})
with $q=1$, $q'=q_{\ell}$ and $p=1$, we
get that there exists some $C>0$ such that for any $\ell>m_0$,
\begin{equation}
  \label{eq:p1somme2}
\|\mathbf{S}_{n,j}^{(1,q_{\ell},1)}\|_2\leq C^{\frac{q_{\ell}+1}{2}}
\{(q_{\ell}-1)!\}^{1/2-d}
\,n^{-\alpha(1,q_{\ell},1)}(\log n)^{\varepsilon(q_{\ell}-1)}\gamma_j^{\beta(1,1)+\beta(q_{\ell},1)+2K}(\log\gamma_j)^{3}\;.
\end{equation}
The definition of $\alpha$ and $\beta$ by Equations~(\ref{e:alpha}) and~(\ref{e:beta}), implies that
\[
\alpha(1,q_{\ell},1)=1/2-\delta_+(q_{\ell}-1),\,\beta(1,1)=d,\,\beta(q_{\ell},1)=\max(d+\delta_+(q_{\ell}-1)-1/2,0)\;.
\]
Since $\ell>m_0$, one has $\delta_+(q_{\ell}-1)\leq
\delta_+(q_{m_0+1}-1)$. Thus
$$
n_j^{-\alpha(1,q_{\ell},1)}(\log n_j)^{\varepsilon(q_{\ell}-1)}
\leq n_j^{1/2-\delta_+(q_{m_0+1}-1)}\log n_j
=o\left(n_j^{1/2-\delta(q_{m_0}-1)}\right)\;.
$$
Observe now that for $\ell>m_0$, we have $q_{\ell}-1\geq q_{m_0}$ and thus
$$
\gamma_j^{\beta(1,1)+\beta(q_{\ell},1)+2K}(\log\gamma_j)^{3}
\leq \gamma_j^{d+2K+\max(d+\delta_+(q_{m_0})-1/2,0)}(\log\gamma_j)^{3}
=o\left(\gamma_j^{d+2K+\delta(q_{m_0})}\right) \;.
$$
Now, using the  last  two displayed equations,~(\ref{eq:p1somme2}) and
Condition~(\ref{e:condcv}), we obtain the limit~(\ref{e:supSigma2b}), which
concludes the proof.
\end{proof}
We now deduce the asymptotic behavior of
$\mathbf{\Sigma}_{n_j,j}^{(2)}$.
\begin{proposition}\label{pro:Sigma2}
  Under the same assumptions as Proposition~\ref{pro:intSigma2}, we have as
  $j\to\infty$
\begin{equation}\label{e:Sigma2}
n_j^{(1-2\delta(q_{m_0}-1))/2}\gamma_j^{-(\delta(q_{m_0})+d+2K)}\mathbf{\Sigma}_{n_j,j}^{(2)}
\overset{(\mathcal{L})}{\rightarrow}\frac{2 c_1 c_{q_{m_0}}}{(q_{m_0}-1)!}[f^*(0)]^{(q_{m_0}+1)/2}\K_1
Z_{q_{m_0}-1,d}(1)\;,
\end{equation}
where $\K_1$ is defined in~(\ref{e:defKp}) and
$Z_{q-1,d}$ is the Hermite process defined in~(\ref{e:harmros}).
\end{proposition}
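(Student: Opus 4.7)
The plan is to isolate a single dominant summand in the decomposition~(\ref{e:defSigma2}) and apply Proposition~\ref{pro:lrd} to it, relying on Proposition~\ref{pro:intSigma2} to show that all other summands are negligible under the chosen normalization. Specifically, I write
\[
\mathbf{\Sigma}_{n_j,j}^{(2)} = \frac{2 c_1 c_{q_{m_0}}}{(q_{m_0}-1)!}\,\mathbf{S}_{n_j,j}^{(1,q_{m_0},1)} + \mathbf{R}_{n_j,j}\;,
\]
where $\mathbf{R}_{n_j,j}$ collects the term $\mathbf{S}_{n_j,j}^{(1,q_{m_0},0)}$, all terms $\mathbf{S}_{n_j,j}^{(1,q_\ell,0)}$ for $\ell\geq m_0$, and all terms $\mathbf{S}_{n_j,j}^{(1,q_\ell,1)}$ for $\ell>m_0$. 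By Proposition~\ref{pro:intSigma2}, the sums~(\ref{e:supSigma2a}) and~(\ref{e:supSigma2b}) tend to zero after multiplication by $n_j^{(1-2\delta(q_{m_0}-1))/2}\gamma_j^{-(\delta(q_{m_0})+d+2K)}$, so $\mathbf{R}_{n_j,j}$ vanishes in $L^2$ (and hence in law) under this normalization.

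Next I would apply Proposition~\ref{pro:lrd} to $\mathbf{S}_{n_j,j}^{(1,q_{m_0},1)}$ with the triplet $(q,q',p)=(1,q_{m_0},1)$. The hypothesis~(\ref{e:idLT}) is verified as follows: $p=q=1$ and $q'=q_{m_0}<1/(1-2d)$ by assumption, while $q+q'-2p=q_{m_0}-1$ satisfies $0<q_{m_0}-1<q_{m_0}<1/(1-2d)$ since $q_{m_0}\geq3$ by definition~(\ref{e:m0}). Proposition~\ref{pro:lrd} then yields
\[
(n_j\gamma_j)^{1-\delta(0)-\delta(q_{m_0}-1)}\gamma_j^{-2(K+\delta(1))}\,\mathbf{S}_{n_j,j}^{(1,q_{m_0},1)}
\overset{(\mathcal{L})}{\longrightarrow}
[f^*(0)]^{(q_{m_0}+1)/2}\,\K_1\,Z_{q_{m_0}-1,d}(1)\;,
\]
using $\delta(0)=1/2$, $\delta(1)=d$, $(q+q')/2=(q_{m_0}+1)/2$, and $q+q'-2p=q_{m_0}-1$.

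The remaining step is a bookkeeping check that this normalization coincides with the one in~(\ref{e:Sigma2}). The exponent of $n_j$ is $1-\delta(0)-\delta(q_{m_0}-1)=1/2-\delta(q_{m_0}-1)=(1-2\delta(q_{m_0}-1))/2$, which matches. The exponent of $\gamma_j$ is $1/2-\delta(q_{m_0}-1)-2K-2d$; using the telescoping identity $\delta(q_{m_0})-\delta(q_{m_0}-1)=d-1/2$, this equals $-\delta(q_{m_0})-d-2K$, again as required. Multiplying by the scalar factor $2c_1c_{q_{m_0}}/(q_{m_0}-1)!$ and combining with the negligibility of $\mathbf{R}_{n_j,j}$ via Slutsky yields~(\ref{e:Sigma2}).

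There is no real obstacle here: the computation is essentially mechanical once Propositions~\ref{pro:intSigma2} and~\ref{pro:lrd} are in hand. The one place requiring a little care is the exponent-matching step, where the identity $\delta(q)-\delta(q-1)=d-1/2$ has to be invoked to rewrite $1/2-\delta(q_{m_0}-1)-2d$ as $-\delta(q_{m_0})-d$; and one must also remember that $\delta(0)=1/2$ (rather than extending $\delta_+$, since $q-p=0$ here), which is what accounts for the asymmetric form of the normalization compared with Proposition~\ref{pro:Sigma0}.
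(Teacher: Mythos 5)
Your proof is correct and follows essentially the same route as the paper: isolate the summand $\mathbf{S}_{n_j,j}^{(1,q_{m_0},1)}$, dispose of the remaining terms via Proposition~\ref{pro:intSigma2}, apply Proposition~\ref{pro:lrd} with $(q,q',p)=(1,q_{m_0},1)$, and match the exponents using $\delta(0)=1/2$ and $\delta(q_{m_0})-\delta(q_{m_0}-1)=d-1/2$. The exponent bookkeeping and the verification of Condition~(\ref{e:idLT}) are both accurate.
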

\begin{proof}
We apply Proposition~\ref{pro:lrd} with $q=1$, $q'=q_{m_0}$ and $p=1$. For
these values, since $q_{m_0}<1/(1-2d)$, Condition~(\ref{e:idLT}) is satisfied.
The exponents of $n$ and $\gamma_j$ in the left-hand side of~(\ref{e:norma})
respectively read
$$
1-\delta(q-p)-\delta(q'-p)=1-\delta(0)-\delta(q_{m_0}-1)=1/2-\delta(q_{m_0}-1)
$$
and
$$
1-\delta(q-p)-\delta(q'-p)-2K-2\delta(p)=-\delta(q_{m_0})-d-2K\;.
$$
Hence we get that
\begin{equation}\label{e:PrepSigma2}
n_j^{(1-2\delta(q_{m_0}-1))/2}\gamma_j^{-(\delta(q_{m_0})+d+2K)}\mathbf{S}_{n_j,j}^{(1,q_{m_0},1)}
\overset{\tiny{(\mathcal{L})}}{\rightarrow}
[f^*(0)]^{(q_{m_0}+1)/2}\K_1 Z_{q_{m_0}-1,d}(1)\;.
\end{equation}
Finally we observe that this term corresponds to the second term of the summand
in~(\ref{e:defSigma2}) with index $\ell=q_{m_0}$, up to the multiplicative
constant $4\pi c_1 c_{q_{m_0}}/(q_{m_0}-1)!$. All the other terms are
negligible by Proposition~\ref{pro:intSigma2}. Thus the limit~(\ref{e:Sigma2})
holds.
\end{proof}
\subsection{Leading term of $\mathbf{\Sigma}_{n_j,j}^{(3)}$}\label{sec:LTSigma3}
In this section we investigate the asymptotic behavior of
$\mathbf{\Sigma}_{n,j}^{(3)}$ defined in~(\ref{e:defSigma3}). We first bound
the sum over indices $\ell=\ell_0$ and $p\neq q_{\ell_0}$ and the one over
indices $\ell>\ell_0$ and $p\in\{0,\dots,q_{\ell}\}$. The two sums will turn
out to be negligible.
\begin{proposition}\label{pro:intSigma3}
Assume that Assumptions A hold with $M\geq K+ \delta(q_{\ell_0})$ and
\begin{equation}
  \label{eq:condSogma3}
q_{\ell_0}+1<1/(1-2d)\;.
\end{equation}
Let $(n_j)$ be a diverging sequence. Then, as
$j\to\infty$,
\begin{equation}\label{e:supSigma3a}
n_j^{\frac{1-2d}{2}}\gamma_j^{-(\delta(q_{\ell_0})+\delta(q_{\ell_0}+1)+2K)}
\left(\sum_{p=0}^{q_{\ell_0}-1}\frac{c_{q_{\ell_0}}}{q_{\ell_0}!}\frac{c_{q_{\ell_0}+1}}{(q_{\ell_0}+1)!}
 p!{{q_{\ell_0}}\choose{p}}{{q_{\ell_0}+1}\choose{p}}\|\mathbf{S}_{n_j,j}^{(q_{\ell_0},q_{\ell_0}+1,p)}\|_2\right)\rightarrow
0\;,
\end{equation}
\begin{equation}\label{e:supSigma3b}
n_j^{\frac{1-2d}{2}}\gamma_j^{-(\delta(q_{\ell_0})+\delta(q_{\ell_0}+1)+2K)}
\left(\sum_{\ell\in
I\setminus\{\ell_0\}}\sum_{p=0}^{q_\ell}\frac{c_{q_\ell}}{q_{\ell}!}\frac{c_{q_\ell+1}}{(q_{\ell}+1)!}
 p!{{q_{\ell}}\choose{p}}{{q_{\ell}+1}\choose{p}}\|\mathbf{S}_{n_j,j}^{(q_{\ell},q_{\ell}+1,p)}\|_2\right)\rightarrow
0\;.
\end{equation}
\end{proposition}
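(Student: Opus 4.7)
The plan parallels the proofs of Propositions~\ref{pro:intSigma0} and~\ref{pro:intSigma2}: I would apply Corollary~\ref{cor:SD} to bound $\|\mathbf{S}_{n_j,j}^{(q_\ell,q_\ell+1,p)}\|_2$ and verify that after multiplication by the normalization $n_j^{(1-2d)/2}\gamma_j^{-(\delta(q_{\ell_0})+\delta(q_{\ell_0}+1)+2K)}$ every contribution vanishes as $j\to\infty$. Condition~(\ref{eq:condSogma3}) forces $d>1/4$ (so $\min(1-2d,1/2)=1-2d$), makes both $\delta(q_{\ell_0})$ and $\delta(q_{\ell_0}+1)$ strictly positive, and gives $\varepsilon(q_{\ell_0}+1)=0$, which removes the $(\log\gamma_j)^3$ factor for $\ell=\ell_0$.

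For~(\ref{e:supSigma3a}), the sum is finite, with $q=q_{\ell_0}$, $q'=q_{\ell_0}+1$ and $p\in\{0,\dots,q_{\ell_0}-1\}$. Since $q-p\geq 1$ and $q'-p\geq 2$, both $\delta_+(q-p)$ and $\delta_+(q'-p)$ are bounded by $d$, hence $\alpha(q,q',p)\geq\min(1-2d,1/2)=1-2d>(1-2d)/2$. Thus $n_j^{(1-2d)/2-\alpha}(\log n_j)^{\varepsilon(2q_{\ell_0}+1-2p)}$ is a strict negative power of $n_j$ times a logarithm, hence tends to zero. Using $\beta(q,p)\leq\delta_+(q)=\delta(q)$ from Lemma~\ref{lem:exp}, the normalized $\gamma_j$-exponent $\beta(q_{\ell_0},p)+\beta(q_{\ell_0}+1,p)-\delta(q_{\ell_0})-\delta(q_{\ell_0}+1)$ is non-positive, so the $\gamma_j$-factor is $O(1)$; combining the two yields~(\ref{e:supSigma3a}).

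For~(\ref{e:supSigma3b}) the key observation is that $\ell_0+1\in\ellset$ with $q_{\ell_0+1}=q_{\ell_0}+1$, so any $\ell\in I$ with $\ell>\ell_0$ satisfies $q_\ell\geq q_{\ell_0}+1$ and $q_\ell+1\geq q_{\ell_0}+2$. By monotonicity of $\delta_+$,
\[
\beta(q_\ell,p)+\beta(q_\ell+1,p)-\delta(q_{\ell_0})-\delta(q_{\ell_0}+1)\leq \delta_+(q_{\ell_0}+2)-\delta(q_{\ell_0})<0,
\]
uniformly in $(\ell,p)$, giving strict polynomial decay in $\gamma_j$ which absorbs any $(\log\gamma_j)^3$ correction. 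The $n_j$-factor satisfies $\alpha(q_\ell,q_\ell+1,p)\geq(1-2d)/2$, with equality only when $p=q_\ell$; but in that case the order $q+q'-2p=1$ forces $\varepsilon(1)=0$, killing the log in $n_j$. Hence the $n_j$-factor is at most $O(\log n_j)$ and is absorbed by the $\gamma_j$-decay. Factoring out the vanishing $\gamma_j$-power, the remaining double sum over $(\ell,p)$ is dominated by the universal series
\[
\sum_{q,q'\geq 1}\sum_{p=0}^{q\wedge q'}\frac{|c_q|\,|c_{q'}|}{q!\,q'!}\,p!\binom{q}{p}\binom{q'}{p}C^{(q+q')/2}\Lambda_2(q-p,p)^{1/2}\Lambda_2(q'-p,p)^{1/2},
\]
which is finite by Condition~(\ref{e:condcv}) and Lemma~\ref{lem:c_q-cond}, yielding~(\ref{e:supSigma3b}).

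The main obstacle is the borderline case $p=q_\ell$ in~(\ref{e:supSigma3b}): there the $n_j$-factor provides no decay and the vanishing must be produced entirely by the $\gamma_j$-factor via the strict inequality $\delta_+(q_{\ell_0}+2)<\delta(q_{\ell_0})$, so the careful monotonicity bookkeeping for $\alpha$ and $\beta$ in every $(q_\ell,p)$ configuration is the heart of the argument.
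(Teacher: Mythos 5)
Your proposal follows essentially the same route as the paper's proof: Corollary~\ref{cor:SD} combined with the bounds on $\alpha$ and $\beta$ from Lemma~\ref{lem:exp}, the observation that $\varepsilon(1)=0$ disposes of the borderline case $p=q_\ell$, the gap $q_\ell\geq q_{\ell_0}+1$ for $\ell\in I\setminus\{\ell_0\}$ yields a uniformly negative exponent of $\gamma_j$, and Lemma~\ref{lem:c_q-cond} controls the sum of the combinatorial constants. One phrasing to tighten: a factor that were genuinely $O(\log n_j)$ could not be ``absorbed by the $\gamma_j$-decay,'' since $n_j$ and $\gamma_j$ are unrelated sequences; fortunately your own case analysis shows the normalized $n_j$-factor is in fact $O(1)$ (no logarithm when $p=q_\ell$, strict power decay in $n_j$ otherwise), which is what actually makes the conclusion valid.
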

\begin{proof}
  Observe that, since $q_{\ell_0}\geq 1$, the assumption
  $q_{\ell_0}+1<1/(1-2d)$ implies that $d\in (1/4,1/2)$.

  We first prove Inequality~(\ref{e:supSigma3a}). Since there is only a finite
  number of terms in the left hand side of Inequality~(\ref{e:supSigma3a}), we
  only have to prove that each term tends to $0$. We apply
  Corollary~\ref{cor:SD} with $q=q_{\ell_0}$, $q'=q_{\ell_0}+1$ and $p\leq
  q_{\ell_0}-1$.  For these values of $q,q'$ and $p$, under
  Condition~(\ref{eq:condSogma3}), we have $\varepsilon(q')=0$, and
  by~(\ref{e:expnaqqpun}) and~(\ref{e:expj}), we have
$\alpha(q,q',p)\geq \min ({3}(1/2-d),1/2)$, $\beta(q,p)\leq
\delta_+(q_{\ell_0})=\delta(q_{\ell_0})$ and $\beta(q',p)\leq
\delta_+(q_{\ell_0}+1)=\delta(q_{\ell_0}+1)$.
Thus Equation~(\ref{e:boundL2P51ab}) yields
\[
n_j^{(1-2d)/2}\gamma_j^{-(\delta(q_{\ell_0})+\delta(q_{\ell_0}+1)+2K)}
\|S_{n_j,j}^{(q_{\ell_0},q_{\ell_0}+1,p)}\|_2=
O\left(n_j^{-\min(1-2d,d)}\log(n_j)\right)\;.
\]
Since $d\in (1/4,1/2)$, we obtain~(\ref{e:supSigma3a}).

We now prove~(\ref{e:supSigma3b}).  We apply Corollary~\ref{cor:SD} with
$q=q_{\ell}$, $q'=q_{\ell}+1$ and $p\leq q_{\ell}$ for some $\ell\in
I\setminus\{\ell_0\}$.  In this case Inequality~(\ref{e:boundL2P51ab}) reads
\begin{multline}\label{e:INEQboundSigma3b}
\gamma_j^{-(\delta(q_{\ell_0})+\delta(q_{\ell_0}+1)+2K)}
\|S_{n,j}^{(q_\ell,q_{\ell}+1,p)}\|_2\\
\leq C^{q_\ell
+\frac{1}{2}}\Lambda_2(q_{\ell}-p,p)^{\frac{1}{2}}\Lambda_2(q_{\ell}+1-p,p)^{\frac{1}{2}}
n^{-\alpha(q_\ell,q_{\ell}+1,p)}\log(n)^{\varepsilon(2q_{\ell}+1-2p)}\\
\times\gamma_j^{(\beta(q_\ell,p)-\delta(q_{\ell_0}))+(\beta(q_{\ell}+1,p)-\delta(q_{\ell_0}+1))}(\log\gamma_j)^3\;.
\end{multline}
We observe that for $n$ large enough,
\begin{equation}\label{e:boundnSigma3}
n^{-\alpha(q_\ell,q_{\ell}+1,p)}\log(n)^{\varepsilon(2q_{\ell}+1-2p)}\leq n^{-(1-2d)/2}\;.
\end{equation}
Indeed, on the one hand, if $p=q_{\ell}$, then
$\varepsilon(2q_{\ell}+1-2p)=\varepsilon(q_{\ell}+q_{\ell}+1-2q_{\ell})=\varepsilon(1)=0$
and $\alpha(q_{\ell},q_{\ell}+1,q_{\ell})\geq (1-2d)/2$~(\ref{e:expnb}). On the
other hand, if $p<q_\ell$, since $d>1/4$,~(\ref{e:expna}) implies that
$\alpha(q_{\ell},q_{\ell}+1,p)\geq 1-2d$.

In addition, by~(\ref{e:expj}) one has for any $p\leq q_{\ell}$,
$\beta(q_{\ell},p)\leq \delta_+(q_{\ell})$. Thus, for any
$\ell>\ell_0$ and any $p\leq q_{\ell}$,
\begin{multline}\label{e:boundgammajSigma3}
\gamma_j^{(\beta(q_\ell,p)-\delta(q_{\ell_0}))+(\beta(q_{\ell}+1,p)-\delta(q_{\ell_0}+1))}(\log\gamma_j)^3\leq
\gamma_j^{(\delta_+(q_{\ell})-\delta(q_{\ell_0}))+(\delta_+(q_{\ell}+1)-\delta(q_{\ell_0}+1))}(\log\gamma_j)^3\\
\leq \gamma_j^{(\delta_+(q_{\ell_0+1})-\delta(q_{\ell_0}))+(\delta_+(q_{\ell_0+1}+1)-\delta(q_{\ell_0}+1))}(\log\gamma_j)^3 =o(1)\;.
\end{multline}
As in the proof of Proposition~\ref{pro:intSigma0}, applying
Lemma~\ref{lem:c_q-cond} with Condition~(\ref{e:condcv}), we have
$$
\sum_{\ell\geq0}
\frac{C^{q_{\ell}+{1}/{2}}|c_{q_\ell}||c_{q_\ell+1}|}{q_{\ell}!(q_{\ell}+1)!}\sum_{p=0}^{q_\ell}
p!{{q_{\ell}}\choose{p}}{{q_{\ell}+1}\choose{p}}\Lambda_2(q_{\ell}-p,p)^{\frac{1}{2}}\Lambda_2(q_{\ell}+1-p,p)^{\frac{1}{2}}
< \infty \;.
$$
Applying this,~(\ref{e:INEQboundSigma3b}), ~(\ref{e:boundnSigma3})
and~(\ref{e:boundgammajSigma3}), we obtain~(\ref{e:supSigma3b}).
\end{proof}
The following result can now be established.
\begin{proposition}\label{pro:Sigma3}
Under the same assumptions as Proposition~\ref{pro:intSigma3}, we have as $j\to\infty$
\[
n_j^{(1-2d)/2}\gamma_j^{-(\delta(q_{\ell_0}+1)+\delta(q_{\ell_0})+2K)}
\mathbf{\Sigma}_{n_j,j}^{(3)}
\overset{(\mathcal{L})}{\rightarrow}2\frac{c_{q_{\ell_0}}c_{q_{\ell_0}+1}}{q_{\ell_0}!}[f^*(0)^{q_{\ell_0}+1/2}\K_{q_{\ell_0}}]Z_{1,d}(1)\;.
\]
\end{proposition}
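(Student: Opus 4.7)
The plan is to identify the unique non-negligible summand of $\mathbf{\Sigma}_{n_j,j}^{(3)}$ and apply Proposition~\ref{pro:lrd} to it, using Proposition~\ref{pro:intSigma3} to dispose of all remaining terms. In the definition~(\ref{e:defSigma3}) of $\mathbf{\Sigma}_{n_j,j}^{(3)}$, Proposition~\ref{pro:intSigma3} already bounds all contributions with $\ell \in I \setminus \{\ell_0\}$ (any $p$) and those with $\ell = \ell_0$ and $p \leq q_{\ell_0}-1$, under the current normalization. Hence the only candidate leading term is the one with $\ell = \ell_0$ and $p = q_{\ell_0}$, namely $\mathbf{S}_{n_j,j}^{(q_{\ell_0}, q_{\ell_0}+1, q_{\ell_0})}$, which is a multiple Wiener--It\^o integral of order $q_{\ell_0} + (q_{\ell_0}+1) - 2 q_{\ell_0} = 1$, i.e.\ Gaussian.

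To compute its limit, I apply Proposition~\ref{pro:lrd} with $q = q_{\ell_0}$, $q' = q_{\ell_0}+1$, $p = q_{\ell_0}$. First I check Condition~(\ref{e:idLT}): one has $0 < p = q_{\ell_0} \leq q' = q_{\ell_0}+1 < 1/(1-2d)$ by assumption~(\ref{eq:condSogma3}), and $q + q' - 2p = 1 \in (0, 1/(1-2d))$ trivially. Thus the hypotheses are satisfied and Proposition~\ref{pro:lrd} gives, with $r = 1$,
\begin{equation*}
(n_j \gamma_j)^{1 - \delta(0) - \delta(1)} \gamma_j^{-2(K + \delta(q_{\ell_0}))} \mathbf{S}_{n_j,j}^{(q_{\ell_0}, q_{\ell_0}+1, q_{\ell_0})} \overset{(\mathcal{L})}{\rightarrow} [f^*(0)]^{q_{\ell_0} + 1/2}\, \K_{q_{\ell_0}}\, Z_{1,d}(1).
\end{equation*}
Using $\delta(0) = 1/2$, $\delta(1) = d$, and the identity $\delta(q_{\ell_0}+1) = \delta(q_{\ell_0}) - (1-2d)/2$, the prefactor rewrites as $n_j^{(1-2d)/2} \gamma_j^{-(\delta(q_{\ell_0}) + \delta(q_{\ell_0}+1) + 2K)}$, matching the normalization of the statement.

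It then remains to compute the combinatorial coefficient carried by the $p = q_{\ell_0}$, $\ell = \ell_0$ term in~(\ref{e:defSigma3}). This coefficient equals
\begin{equation*}
2 \cdot \frac{c_{q_{\ell_0}}}{q_{\ell_0}!} \cdot \frac{c_{q_{\ell_0}+1}}{(q_{\ell_0}+1)!} \cdot q_{\ell_0}! \binom{q_{\ell_0}}{q_{\ell_0}} \binom{q_{\ell_0}+1}{q_{\ell_0}} = 2 \frac{c_{q_{\ell_0}} c_{q_{\ell_0}+1}}{q_{\ell_0}!},
\end{equation*}
which is exactly the constant appearing in the claimed limit. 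Combining the limit above with the $L^2$ bounds~(\ref{e:supSigma3a}) and~(\ref{e:supSigma3b}) of Proposition~\ref{pro:intSigma3} (which imply convergence to $0$ in probability of all other summands after normalization) via Slutsky's theorem yields the conclusion.

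The only subtlety, and the step most worth double-checking, is the bookkeeping of normalization exponents: one must verify that the exponent of $\gamma_j$ arising naturally from Proposition~\ref{pro:lrd}, namely $-(2K + 2\delta(q_{\ell_0})) + (1-2d)/2$ after pulling the $(n_j \gamma_j)^{(1-2d)/2}$ factor apart, coincides with $-(\delta(q_{\ell_0}) + \delta(q_{\ell_0}+1) + 2K)$. This is immediate from the recursion $\delta(q+1) = \delta(q) + d - 1/2$, but it is the algebraic identity on which the whole statement hinges. The rest of the argument is a direct transcription of the scheme already carried out for Propositions~\ref{pro:Sigma0} and~\ref{pro:Sigma2}.
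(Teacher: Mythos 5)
Your proposal is correct and follows essentially the same route as the paper: apply Proposition~\ref{pro:lrd} with $q=q_{\ell_0}$, $q'=q_{\ell_0}+1$, $p=q_{\ell_0}$ after checking Condition~(\ref{e:idLT}), rewrite the normalization via $\delta(q+1)=\delta(q)+d-1/2$, and dispose of all other summands of $\mathbf{\Sigma}_{n_j,j}^{(3)}$ by Proposition~\ref{pro:intSigma3}. Your explicit verification of the combinatorial constant $2c_{q_{\ell_0}}c_{q_{\ell_0}+1}/q_{\ell_0}!$ is a detail the paper leaves implicit, but the argument is the same.
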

\begin{proof}
We apply Proposition~\ref{pro:lrd} with $q=q'-1=q_{\ell_0}$ and
$p=q_{\ell_0}$. Indeed we have, under Condition~(\ref{eq:condSogma3}),
$0<q=q_{\ell_0}<q'=q_{\ell_0}+1<1/(1-2d)$ and
$q+q'-2p=q_{\ell_0}+q_{\ell_0}+1-2q_{\ell_0}=1<1/(1-2d)$. Thus
Condition~(\ref{e:idLT}) holds. We obtain that, as $j\to\infty$,
\begin{equation}\label{e:PrepTn2}
n_j^{(1-2d)/2}\gamma_j^{-(\delta(q_{\ell_0})+\delta(q_{\ell_0}+1)+2K)}
S_{n_j,j}^{(q_{\ell_0},q_{\ell_0}+1,q_{\ell_0})}
\overset{(\mathcal{L})}{\rightarrow}[f^*(0)^{q_{\ell_0}+1/2}\K_{q_{\ell_0}}]Z_{1,d}(1)\;.
\end{equation}
Using this limit, Proposition~\ref{pro:intSigma3} and the definition of
$\mathbf{\Sigma}_{n_j,j}^{(3)}$ in~(\ref{e:defSigma3}), we
conclude the proof.
\end{proof}
\section{Proofs of Theorems~\ref{th:LD1},~\ref{th:LD2} and
\ref{th:LD3}}\label{s:proofsmain}
\subsection{Proof of Theorem~\ref{th:LD1}}\label{s:proofmain1}
In the setting of Theorem~\ref{th:LD1}, one has $q_0\geq2$ and thus $c_1=0$ and
$q_0^*=q_0\geq 2$. Thus $\mathbf{\Sigma}_{n_j,j}^{(2)}$ and
$\mathbf{S}_{n_j,j}^{(1,1,0)}$, vanish in~(\ref{e:s3}) and the asymptotic
behavior of $\overline{S}_{n_j,j}$ results from
$\mathbf{\Sigma}_{n_j,j}^{(0)}+\mathbf{\Sigma}_{n_j,j}^{(1)}$ and
$\mathbf{\Sigma}_{n_j,j}^{(3)}$ given in Proposition~\ref{pro:Sigma0}
and~\ref{pro:Sigma3}, respectively. These propositions apply because we
assume~(\ref{eq:qell0plus1LRD}) and $M\geq K+\delta(q_0)$ in
Theorem~\ref{th:LD1}. Now the ratio of the convergence rates appearing in these
propositions reads
$$
n_j^{1/2-d}\gamma_{j}^{-(\delta(q_{\ell_{0}})+\delta(q_{\ell_{0}}+1)+K)}
=\left(n_j\gamma_j^{-\nu}\right)^{1/2-d}\;.
$$
Hence Case~\ref{it:thm1a} of Theorem~\ref{th:LD1} corresponds to
$$
\mathbf{\Sigma}_{n_j,j}^{(3)}=o_P\left(\mathbf{\Sigma}_{n_j,j}^{(0)}+\mathbf{\Sigma}_{n_j,j}^{(1)}\right)
$$
and Case~\ref{it:thm1b} to
$$
\mathbf{\Sigma}_{n_j,j}^{(0)}+\mathbf{\Sigma}_{n_j,j}^{(1)}
=o_P\left(\mathbf{\Sigma}_{n_j,j}^{(3)}\right)\;.
$$
The proof of Theorem~\ref{th:LD1} follows.\hfill$\Box$
\subsection{Proof of Theorems~\ref{th:LD2}
  and~\ref{th:LD3}}\label{s:proofmain23}


Here Condition~(\ref{eq:hyp-hermite-rank1}) holds, so that $q_0=1$,
$q_1<1/(1-2d)$ and $\ell_0=\infty$ (or equivalently $I$ is an empty set). In
particular $\mathbf{\Sigma}_{n_j,j}^{(3)}$ vanishes in~(\ref{e:s3}) and the
asymptotic behavior of $\overline{S}_{n_j,j}$ is obtained from those of
$\mathbf{S}_{n_j,j}^{(1,1,0)}$,
$\mathbf{\Sigma}_{n_j,j}^{(0)}+\mathbf{\Sigma}_{n_j,j}^{(1)}$ and
$\mathbf{\Sigma}_{n_j,j}^{(2)}$.  Since moreover $M>K+d$,
Proposition~\ref{pro:Snj110} applies.  Using the definition of $q_0^*$
in~(\ref{e:q0star}) we have $q_0^*=q_1$, and since $M>K+d\geq K+\delta(q_0^*)$
Propositions~\ref{pro:Sigma0} also applies.  Finally, observing that here $m_0$
defined in~(\ref{e:m0}) equals 1 and that $M>K+d$, Proposition~\ref{pro:Sigma2}
applies. Thus, using~(\ref{e:s3}), it only remains to compare the convergence
rates in these propositions.

\begin{figure}[h]
  \centering \setlength{\unitlength}{1.5mm}
\begin{picture}(100,50)(-40,-5)
\put(-32,0){\vector(1,0){80}}
\put(-30,-2){\vector(0,1){26}}
\put(35,-4){$\gamma_j$}
\put(-35,25){$n_j$}
\put(-7,30){$\gamma_j^{\nu_3}$}
\put(-7.5,22){$R$}
\put(-5,22){$H$}
\put(-30,25){\arc{50}{0.0}{1.57}}
\put(6,30){$\gamma_j^{\nu_2}$}
\put(-30,40){\arc{80}{0.35}{1.57}}
\put(3.5,22){$R$}
\put(6.5,22){$G$}
\put(22,30){$\gamma_j^{\nu_1}$}
\put(-30,65){\arc{130}{0.65}{1.57}}
\put(16,22){$H$}
\put(20,22){$G$}
\end{picture}
\caption{Pairwise comparisons of the rates of convergence of
$\mathbf{S}_{n_j,j}^{(1,1,0)}$ ($G$),
$\mathbf{\Sigma}_{n_j,j}^{(0)}+\mathbf{\Sigma}_{n_j,j}^{(1)}$ ($R$) and
$\mathbf{\Sigma}_{n_j,j}^{(2)}$ ($H$) in the plane $\gamma_j$ versus $n_j$.}
\label{fig1}
\end{figure}
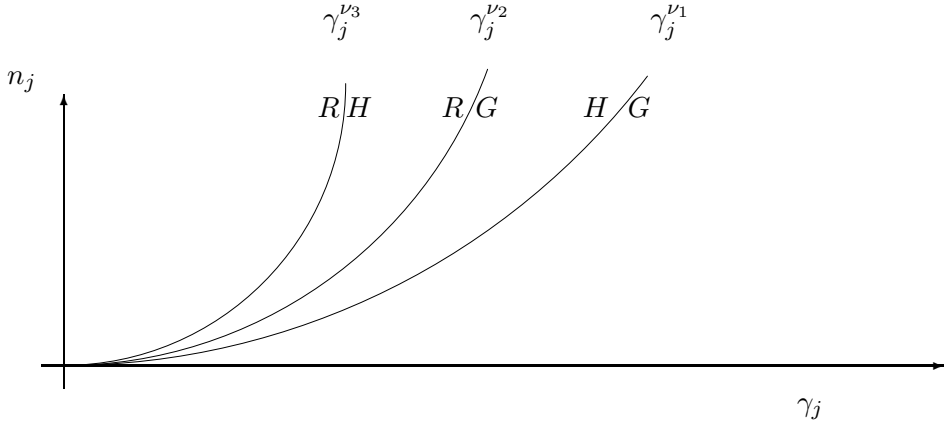

We first prove Theorem~\ref{th:LD2}. Recall that, by Lemma~\ref{lem:compexp},
since $q_1< q_1^*$, one has
\[
\nu_1 < \nu_2< \nu_3\;,
\]
where these three indices are defined in~(\ref{e:defbeta}).  In
Figure~\ref{fig1}, we provide pairwise comparisons of the rates of convergence
of $\mathbf{S}_{n_j,j}^{(1,1,0)}$,
$\mathbf{\Sigma}_{n_j,j}^{(0)}+\mathbf{\Sigma}_{n_j,j}^{(1)}$ and
$\mathbf{\Sigma}_{n_j,j}^{(2)}$. We obtain domains separated by the three
curves $n_j=\gamma_j^{\nu_1}$, $n_j=\gamma_j^{\nu_2}$ and
$n_j=\gamma_j^{\nu_3}$. Each curve is concerned with a pair of two terms among
the three and separates the plane $(\gamma_j,n_j)$ in two domains, where one of
the two terms dominates the other. We indicated the dominating term by $G$ for
the asymptotically Gaussian term $S_{n_j,j}^{(1,1,0)}$, $R$ for the
asymptotically Rosenblatt term
$\mathbf{\Sigma}_{n_j,j}^{(0)}+\mathbf{\Sigma}_{n_j,j}^{(1)}$ and $H$ for the
term $\mathbf{\Sigma}_{n_j,j}^{(3)}$ belonging asymptotically to a chaos of
order greater than $2$.

We begin with the case $q_1=3$. In this case, one has $\nu_3=\infty$. Further Propositions~\ref{pro:Sigma0} and \ref{pro:Sigma2} imply that $\Sigma_{n_j,j}^{(0)}+\Sigma_{n_j,j}^{(1)}=o_P(\Sigma_{n_j,j}^{(2)})$. One has then to compare the rates of convergence of $\mathbf{S}_{n_j,j}^{(1,1,0)}$ and $\Sigma_{n_j,j}^{(2)}$. Using the diagram, we then deduce that
\begin{enumerate}[label=$\bullet$]
\item  if $n_j\ll\gamma_j^{\nu_1}$, $G$ dominates $H$ and then we obtain Case~\ref{item:LD23} of
  Theorem~\ref{th:LD2} for $q_1=3$.
\item if $\gamma_j^{\nu_1}\ll n_j$, $H$ dominates $G$ and then we obtain Case~\ref{item:LD22} of
  Theorem~\ref{th:LD2} for $q_1=3$.
\end{enumerate}

If $q_1>3$, one has $\nu_3<\infty$ and the term $\mathbf{\Sigma}_{n_j,j}^{(0)}+\mathbf{\Sigma}_{n_j,j}^{(1)}$ is no more always negligible with respect to $\Sigma_{n_j,j}^{(2)}$. We then get three domains where one term dominates over the
other two:
\begin{enumerate}[label=$\bullet$]
\item $n_j\ll \gamma_j^{\nu_1}$: $G$ dominates $H$ and $R$, that is,
the two terms $\mathbf{\Sigma}_{n_j,j}^{(0)}+\mathbf{\Sigma}_{n_j,j}^{(1)}$ and
  $\mathbf{\Sigma}_{n_j,j}^{(2)}$ are both negligible with respect to
  $S_{n_j,j}^{(1,1,0)}$. By
  Proposition~\ref{pro:Snj110}, we obtain Case~\ref{item:LD23} of
  Theorem~\ref{th:LD2}.
\item $\gamma_j^{\nu_1}\ll n_j\ll \gamma_j^{\nu_3}$: since the domain lies both
  on the right-hand side of the curve $n_j=\gamma_j^{\nu_3}$ and on the
  left-hand side of the curve $n_j=\gamma_j^{\nu_1}$, $H$ dominates $R$ and $H$
  dominates $G$, hence $R$ dominates $R$ and $G$. That is, the two terms
  $S_{n_j,j}^{(1,1,0)}$ and
  $\mathbf{\Sigma}_{n_j,j}^{(0)}+\mathbf{\Sigma}_{n_j,j}^{(1)}$ are both
  negligible with respect to $\mathbf{\Sigma}_{n_j,j}^{(2)}$. By
  Proposition~\ref{pro:Sigma2}, we obtain Case~\ref{item:LD22} of
  Theorem~\ref{th:LD2}.
  \item $\gamma_j^{\nu_3}\ll n_j$: since the domain lies both on the left-hand
  side of the curve $n_j=\gamma_j^{\nu_3}$ and on the left-hand side of the
  curve $n_j=\gamma_j^{\nu_2}$, $R$ dominates $H$ and $R$ dominates $G$, hence
  $R$ dominates $H$ and $G$. That is, the two terms $S_{n_j,j}^{(1,1,0)}$ and
  $\mathbf{\Sigma}_{n_j,j}^{(2)}$ are both negligible with respect to
  $\mathbf{\Sigma}_{n_j,j}^{(0)}+\mathbf{\Sigma}_{n_j,j}^{(1)}$. By
  Proposition~\ref{pro:Sigma0}, we obtain Case~\ref{item:LD21} of
  Theorem~\ref{th:LD2}.
\end{enumerate}
This completes the proof of Theorem~\ref{th:LD2}.

The proof of Theorem~\ref{th:LD3} is similar except that the
assumption $q_1\geq q_1^*$ implies that
\[
\nu_3\leq \nu_2\leq \nu_1\;.
\]
The domains of convergence are now obtained from Figure~\ref{fig2}. \hfill$\Box$
\begin{figure}[h]
  \centering \setlength{\unitlength}{1.5mm}
\begin{picture}(100,50)(-40,-5)
\put(-32,0){\vector(1,0){80}}
\put(-30,-2){\vector(0,1){26}}
\put(35,-4){$\gamma_j$}
\put(-35,25){$n_j$}
\put(-7,30){$\gamma_j^{\nu_1}$}
\put(-7.7,22){$H$}
\put(-5,22){$G$}
\put(-30,25){\arc{50}{0.0}{1.57}}
\put(6,30){$\gamma_j^{\nu_2}$}
\put(-30,40){\arc{80}{0.35}{1.57}}
\put(3.5,22){$R$}
\put(6.5,22){$G$}
\put(22,30){$\gamma_j^{\nu_3}$}
\put(-30,65){\arc{130}{0.65}{1.57}}
\put(16,22){$R$}
\put(20,22){$H$}
\end{picture}
\caption{Domains of convergence for Theorem~\ref{th:LD3}}
\label{fig2}
\end{figure}
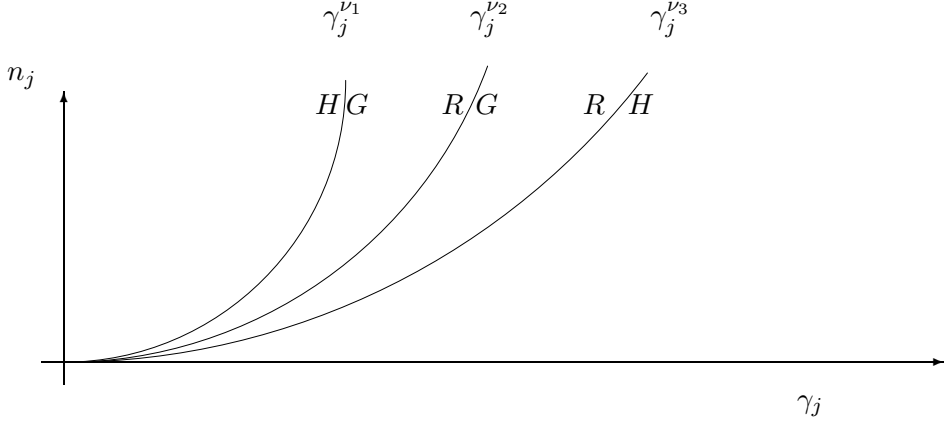
\section{Technical lemmas}\label{sec:techlemma}
The following lemma is used in the proof of Proposition~\ref{pro:SD} and in
that of Lemma~\ref{lem:suphjkp}.
\begin{lemma}\label{lem:Ja}
 Define, for all $a>0$ and $\beta_1\in(0,1)$,
\begin{equation}\label{e:J1a}
J_{1,a}(s_1;\beta_1)=|s_1|^{-\beta_1},\quad s_1\in\mathbb{R}\;,
\end{equation}
and, for any integer $p\geq2$ and
$\beta=(\beta_1,\cdots,\beta_p)\in
  (0,1)^p$,
\begin{equation}\label{e:Jma}
J_{p,a}(s_1;\beta)=\int_{s_2=-(p-1)a}^{(p-1)a}\dots\int_{s_p=-a}^{a}
\prod_{i=2}^p|s_{i-1}-s_i|^{-\beta_{i-1}}\; |s_p|^{-\beta_p}
\;\rmd s_p\dots \rmd s_2,\quad s_1\in\mathbb{R}\;.
\end{equation}
Then
\begin{enumerate}[label=(\roman*)]
\item if $\beta_1+\dots+\beta_p> p-1$, we have
$$
C_p(\beta):= \sup_{a>0}\;\sup_{|s_1|\leq p a}\left(
|s_1|^{-(p-1-(\beta_1+\dots+\beta_p))} J_{p,a}(s_1;\beta) \right)
< \infty \;,
$$
\item if $\beta_1+\dots+\beta_p=p-1$, we have
$$
C_p(\beta):= \sup_{a>0}\;\sup_{|s_1|\leq p a}\left(
\frac{1}{1+\log(pa/|s_1|)} J_{p,a}(s_1;\beta) \right)< \infty
\;,
$$
\item if there exists $q\in\{2,\dots,p-1\}$ such that
  $\beta_q+\dots+\beta_p=p-q$, we have
$$
C_p(\beta):= \sup_{a>0}\;\sup_{|s_1|\leq p a} \left(
  \frac1{1+\log(pa/|s_1|)}a^{-(q-1-(\beta_{1}+\dots+\beta_{q-1}))}J_{p,a}(s_1;\beta)  \right)< \infty \;,
$$
\item if  $\beta_1+\dots+\beta_p<p-1$ and for all
$q\in\{1,\dots,p-1\}$, we
  have  $\beta_q+\dots+\beta_p\neq p-q$, we have
$$
C_p(\beta):= \sup_{a>0}\;\sup_{|s_1|\leq p a}
\left(a^{-(p-1-(\beta_1+\dots+\beta_p))}J_{p,a}(s_1;\beta)
\right)< \infty \;.
$$
\end{enumerate}

Moreover, in the case where all the components of $\beta$ are
equal to $b\in (0,1)$, there exists a constant $c>0$ depending
only on $b$ such that
\begin{equation}
  \label{eq:uniform-CJbound}
\sup_{p\geq1}c^{-p}(p!)^{b-1}\;C_p(b1_p) <\infty \;,
\end{equation}
where $1_p$ denotes the $p$--dimensional vector with all entries
equal to $1$.
\end{lemma}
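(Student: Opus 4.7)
The key observation is that $J_{p,a}$ enjoys exact scaling in $a$: substituting $s_i = a u_i$ gives
\[
J_{p,a}(s_1;\beta) = a^{p-1-(\beta_1+\cdots+\beta_p)}\; J_{p,1}(s_1/a;\beta),
\]
and $|s_1| \leq pa$ iff $|s_1/a| \leq p$. Hence it suffices to prove each of (i)--(iv) with $a=1$ (which is exactly what the stated exponents of $a$ are designed to absorb). I would then proceed by induction on $p$, the base case $p=1$ giving $C_1(\beta_1) = 1$ trivially.

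For the inductive step, I would integrate out the innermost variable $s_p$ first, using the classical one-variable estimate: for $\alpha,\beta \in (0,1)$ and $|x| \leq R$,
\[
\int_{-R}^R |x-t|^{-\alpha}|t|^{-\beta}\,dt \;\leq\; C_{\alpha,\beta}\;\phi_{\alpha+\beta}(x,R),
\]
where $\phi_\gamma(x,R) = |x|^{1-\gamma}$ if $\gamma>1$, $\phi_\gamma(x,R) = 1+\log(R/|x|)$ if $\gamma=1$, and $\phi_\gamma(x,R) = R^{1-\gamma}$ if $\gamma<1$; the case $|x|>R$ follows from $|x-t|\geq |x|/2$ and is strictly better. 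Substituting this bound into $J_{p,1}(s_1;\beta)$ reduces the problem to one of the form $J_{p-1,1}(s_1;\beta')$ with a modified $(p-1)$-vector $\beta'$: when $\beta_{p-1}+\beta_p>1$, set $\beta'_{p-1}=\beta_{p-1}+\beta_p-1$; when the sum equals $1$, a logarithmic factor is produced; when the sum is $<1$, the two rightmost coordinates disappear and an $R^{1-\beta_{p-1}-\beta_p}$ factor comes out. A careful case analysis then shows that the partition (i)--(iv) of the hypothesis is preserved by this recursion: case~(i) (resp.~(ii)) is stable under reduction, the logarithmic condition of case~(iii) first appears at the unique innermost index $q$ where $\beta_q+\cdots+\beta_p$ equals $p-q$, and case~(iv) is produced whenever the sum crosses $p-1$ strictly without meeting any threshold.

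For the uniform-in-$p$ bound \eqref{eq:uniform-CJbound} in the equal-exponent setting $\beta = b \mathbf{1}_p$, I would track the Beta-function constants explicitly through the iteration. At the $k$-th innermost integration over an interval of length $2k$, the worst situation gives a multiplicative factor bounded by $C \, k^{1-b} \, B(1-b, 1-(k-1)b)$ whenever the relevant partial sum is $<1$, or a factor controlled by the corresponding Gamma-function identity when it is $\geq 1$. Telescoping $B(1-\alpha,1-\beta) = \Gamma(1-\alpha)\Gamma(1-\beta)/\Gamma(2-\alpha-\beta)$ collapses the product of these Beta factors to a single ratio of Gammas of size at most $c^p$, while the interval-length factors combine as $\prod_{k=1}^p k^{1-b} = (p!)^{1-b}$; Stirling's formula then yields the stated growth $C_p(b\mathbf{1}_p) \leq c^p (p!)^{1-b}$.

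The main obstacle is case~(iii): the logarithmic factor $1+\log(pa/|s_1|)$ must be threaded all the way from the deep index $q$ where the threshold $\beta_q+\cdots+\beta_p=p-q$ is first met, out to the outermost variable $s_1$. This forces us to single out that specific $q$ before performing the reduction, split the induction into the two blocks of indices $\{q,\ldots,p\}$ and $\{1,\ldots,q-1\}$, and verify that the residual integration in the outer block produces the factor $a^{-(q-1-(\beta_1+\cdots+\beta_{q-1}))}$ exactly as stated, without generating further logarithmic terms (which is ensured by the hypothesis that no other partial sum $\beta_{q'}+\cdots+\beta_p$ hits the critical value $p-q'$).
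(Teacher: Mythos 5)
Your plan is essentially correct but takes a genuinely different route from the paper. The paper's own proof is very short: it records the \emph{outermost-first} recursion~(\ref{eq:inductionJ}), $J_{p,a}(s_1;\beta)=\int_{-(p-1)a}^{(p-1)a}|s_2-s_1|^{-\beta_1}J_{p-1,a}(s_2;\beta_2,\dots,\beta_p)\,\rmd s_2$ --- for which the nested integration ranges match \emph{exactly} and the tail-sum conditions $\beta_q+\dots+\beta_p=p-q$ defining cases (i)--(iv) pass verbatim to the reduced vector --- defers the resulting case analysis to Lemma~9.3 of the companion paper, and obtains the uniform bound~(\ref{eq:uniform-CJbound}) in two lines: for $p\geq p_0(b)$ one has $(p-1)b<p-2$, so case~(iv) gives $J_{p-1,a}(s_2;b\,1_{p-1})\leq C_{p-1}\,a^{p-2-(p-1)b}$ uniformly in $s_2$, and the single outer integration contributes $c(b)((2p-1)a)^{1-b}$, whence $C_p\leq c(b)\,p^{1-b}\,C_{p-1}$ and $(p!)^{1-b}$ by telescoping. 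Your scaling reduction to $a=1$ is correct and is a genuine simplification the paper does not exploit. Your \emph{innermost-first} reduction buys you the explicit one-variable convolution estimate, but costs two things you should not gloss over: first, after integrating out $s_p$ the remaining range of $s_{p-1}$ is $[-2a,2a]$, not $[-a,a]$, so the result is not literally of the form $J_{p-1,1}(\,\cdot\,;\beta')$; you must enlarge the domains (legitimate, since the integrand is nonnegative) to dominate by $J_{p-1,2a}$ and check $|s_1|\leq pa\leq 2(p-1)a$. Second, in the sub-critical step $\beta_{p-1}+\beta_p<1$ the singular weight at the innermost variable disappears entirely, so the reduced object is no longer of $J$-type and the whole tail collapses to powers of $a$; this is consistent with case~(iv) (note $\beta_{p-1}+\beta_p<1$ forces every tail sum strictly below its threshold), but it has to be argued separately rather than fed back into the recursion as stated. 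Finally, your Beta-function telescoping for~(\ref{eq:uniform-CJbound}) overcomplicates matters: for fixed $b$ only boundedly many supercritical merges occur before the tail collapses, so the per-step constants are bounded by $c(b)$ and the factor $(p!)^{1-b}$ comes entirely from the interval lengths $\prod_k k^{1-b}$, exactly as in the paper's one-step argument.
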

\begin{remark}
  As in~\cite{clausel-roueff-taqqu-tudor-2011b}, all the cases can be compactly
  written as
$$
C_p(\beta)=\sup_{a>0} \sup_{|s_1|\leq pa
}\left(\frac{a^{-(p-1-(\beta_1+\cdots+\beta_p))_+}
|s_1|^{(p-1-(\beta_1+\cdots+\beta_p))_-}}
{(1+\log(pa/|s_1|))^{\varepsilon}}J_{p,a}(s_1;\beta)\right)
$$
where $\varepsilon=1$ if there exists $q\in\{1,\cdots,p\}$ such
that $\beta_q+\cdots+\beta_p=p-q$ and $\varepsilon=0$ otherwise,
and $x_+=\max(x,0)$, $x_-=\max(-x,0)$.
Now, observing that
\[
(p-1-2 p d)_+=(p(1-2d)-1)_+=(-2\delta(p))_+=2\delta_-(p)\;,
\]
and, similarly, $(p-1-2 p d)_-=2\delta_+(p)$,
Inequality~(\ref{eq:uniform-CJbound}) with $b=2d\in(0,1)$ implies
there exists a constant $c>0$
depending only on $d$ such that for any $a>0$, $|s_1|\leq p a$
\begin{equation}\label{e:Jmaunified}
J_{p,a}(s_1;2d 1_p)\leq c^{p}(p!)^{1-2d}
a^{2\delta_-(p)}|s_1|^{-2\delta_+(p)}
(1+\log(pa/|s_1|))^{\varepsilon(p)}\;,
\end{equation}
where $\varepsilon(p)$ is here defined by~(\ref{e:varepsilontilde}), which
corresponds to the $\varepsilon$ above in the case $\beta_1=\dots=\beta_p=2d$.
\end{remark}
\begin{proof}
Observe first that for all $p\geq1$,
  \begin{equation}
    \label{eq:inductionJ}
J_{p,a}(s_1;\beta)=\int_{s_2=-(p-1)a}^{(p-1)a}|s_2-s_1|^{-\beta_1}\;
J_{p-1,a}(s_2;\beta')\;\rmd s_2\;,
  \end{equation}
  where $\beta'=(\beta_2,\dots,\beta_{p})$. The finiteness of the bounds
  $C_p(\beta)$ for any integer $p$ and any $\beta\in (0,1)^p$ is then proved by
  induction on $p$ in the different cases in Lemma~9.3
  of~\cite{clausel-roueff-taqqu-tudor-2011b}.

Finally we show the uniform bound~(\ref{eq:uniform-CJbound}), that
is, that $C_p(b,\dots,b)=O(c_1^p(p!)^{1-b})$ as $p\to\infty$ for
any fixed $b\in(0,1)$.  We provide a proof only in the case where
$1/(1-b)$ is not an integer (to avoid cases (ii) and (iii)). The
proof is similar in the other case.  Hence we use the induction
step described in Case 1 above. Observe that there exists some
integer $p_0$ depending only on $b$, such that for any $p\geq p_0$
we have $(p-1)b<p-2$, which corresponds above to
$\beta_2+\cdots+\beta_p<p-2$ (case (iv)). Hence using the
induction assumption~(\ref{eq:inductionJ}), the finiteness of
$C_p$ in case (iv) and the fact that $|s_1|\leq p a$, we get that
there exists some positive constant $c$ depending only on $b$ such
that,
\begin{eqnarray*}
J_{p,a}(s_1;b,\cdots,b)&\leq& C_{p-1}(b,\cdots,b)\;a^{p-2-(p-1)b}
\left(\int_{-(p-1)a}^{(p-1)a}|s_2-s_1|^{-b}\rmd s_2\right)\\
&\leq& C_{p-1}(b,\cdots,b)\;a^{p-2-(p-1)b}\times \left(c(b)
((2p-1)a)^{1-b}\right)\\
&=& \left(c(b)p^{1-b} \; C_{p-1}(b,\dots,b)\right)\;a^{p-1-pb}\;.
\end{eqnarray*}
This yields that for any $p\geq p_0(b)$, $C_{p}(b,\dots,b)\leq
c(b)p^{1-b} \; C_{p-1}(b,\dots,b)$. Since this holds for any
$p\geq p_0(b)$, the bound~(\ref{eq:uniform-CJbound}) follows by
induction.
\end{proof}

The following lemma provides bounds of $\alpha$ and $\beta$
defined in~(\ref{e:alpha}) and~(\ref{e:beta}). It is used in the
proofs of Propositions~\ref{pro:intSigma0},~\ref{pro:intSigma2}
and ~\ref{pro:intSigma3}.
\begin{lemma}\label{lem:exp}
One has
\begin{enumerate}
\item Assume that $d>1/4$. Then for any $(q,q')\in\mathbb{N}^2$
\begin{equation}\label{e:expna}
\inf_{0\leq p\leq \min(q\vee q'-2,q\wedge
q')}\left(\alpha(q,q',p)\right)\geq 1-2d\;,
\end{equation}
In any case,
\begin{equation}\label{e:expnb}
\alpha(q,q',\min(q\vee q'-1,q\wedge q'))\geq 1/2-d\;.
\end{equation}
\item For any $q\in\mathbb{N}$
\begin{equation}\label{e:expnaqq}
\inf_{0\leq p\leq q-2}\left(\alpha(q,q,p)\right)\geq
\min(2(1-2d),1/2)\;.
\end{equation}
Further,
\begin{equation}\label{e:expnaqqbis}
\alpha(q,q,q-1) = \min(1-2d,1/2)\;.
\end{equation}
\item For any $q\in\mathbb{N}$
\begin{equation}\label{e:expnaqqpun}
\inf_{0\leq p\leq q-1}\left(\alpha(q+1,q,p)\right)\geq
\min(3/2(1-2d),1/2)\;.
\end{equation}
\item For any $q\in\mathbb{N}$
\begin{equation}\label{e:expj}
\sup_{0\leq p\leq q}\left(\beta(q,p)\right)\leq \delta_+(q)\;.
\end{equation}
\end{enumerate}
\end{lemma}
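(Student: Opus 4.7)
The plan is to prove all four items by elementary case analysis, leveraging three basic facts about $\delta(q)=qd-(q-1)/2$ that follow directly from~(\ref{e:ldparamq}): (i) $\delta(q+1)-\delta(q)=d-1/2<0$, so $\delta$ and $\delta_+$ are strictly decreasing on $\mathbb{N}$; (ii) the sum rule $\delta(p)+\delta(q-p)=\delta(q)+1/2$ for $0\leq p\leq q$; and (iii) the explicit values $\delta(0)=1/2$, $\delta(1)=d$, $\delta(2)=2d-1/2$, which combined with (i) give $\delta_+(q)\leq d$ for $q\geq 1$.

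For the $\alpha$-bounds~(\ref{e:expna})--(\ref{e:expnaqqpun}), the case $p=0$ is trivial since $\alpha=1/2$ and each claimed lower bound is at most $1/2$. For $p\geq 1$, I substitute $\alpha(q,q',p)=\min(1-\delta_+(q-p)-\delta_+(q'-p),1/2)$ and use monotonicity of $\delta_+$ together with (iii) to evaluate the worst case. For~(\ref{e:expna}), the constraint $p\leq \min(q\vee q'-2,q\wedge q')$ forces $q\wedge q'-p\geq 0$ and $q\vee q'-p\geq 2$, so $\delta_+(q\wedge q'-p)\leq 1/2$ and $\delta_+(q\vee q'-p)\leq(2d-1/2)_+$, giving $\alpha\geq\min(1-2d,1/2)=1-2d$ under the assumption $d>1/4$. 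The remaining bounds~(\ref{e:expnb}), (\ref{e:expnaqq}), (\ref{e:expnaqqbis}), (\ref{e:expnaqqpun}) follow the same template, substituting $\delta_+(0)=1/2$, $\delta_+(1)=d$, or $\delta_+(2)=(2d-1/2)_+$ according to which of $q-p$, $q'-p$ is constrained to be at most $0$, $1$, or $2$; the equality in~(\ref{e:expnaqqbis}) is automatic since the extreme $p=q-1$ forces $q-p=q'-p=1$ exactly.

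For~(\ref{e:expj}), since $\delta_+(q)\geq 0$, it suffices to show $\delta_+(p)+\delta_+(q-p)-1/2\leq \delta_+(q)$ for $0\leq p\leq q$. I split on the signs of $\delta(p)$ and $\delta(q-p)$. If both are non-negative, (ii) gives $\delta(p)+\delta(q-p)-1/2=\delta(q)$; taking positive parts preserves the equality when $\delta(q)\geq 0$, and otherwise the left side is $\leq 0$. If $\delta(p)<0$, then monotonicity (i) applied to $p\leq q$ forces $\delta(q)<0$, so $\delta_+(p)=\delta_+(q)=0$, and the inequality reduces to $\delta_+(q-p)\leq 1/2$: this holds with equality at $q=p$ via $\delta_+(0)=1/2$, and via $\delta_+(q-p)\leq d<1/2$ otherwise. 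The case $\delta(q-p)<0$ is symmetric. The supremum is attained at $p=0$, where $\beta(q,0)=\delta_+(q)$ directly.

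There is no genuine analytic obstacle here: the proof is purely bookkeeping. The only point requiring care is that the hypothesis $d>1/4$ in item~(1) is used precisely to ensure $1-2d<1/2$, so that the announced minimum equals $1-2d$ and not $1/2$, and that $\delta_+(2)$ collapses to $2d-1/2$ rather than $0$ in the key substitution.
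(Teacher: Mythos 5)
Your proof is correct and follows essentially the same route as the paper's: monotonicity of $\delta_+$ together with the explicit values $\delta_+(0)=1/2$, $\delta_+(1)=d$, $\delta_+(2)=(2d-1/2)_+$ for the $\alpha$-bounds, and the identity $\delta(p)+\delta(q-p)-1/2=\delta(q)$ for the $\beta$-bound. The only (harmless) imprecision is calling $\delta_+$ strictly decreasing --- it is merely non-increasing, since it vanishes for $q\geq 1/(1-2d)$ --- but nothing in your argument relies on strictness.
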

\begin{proof}
\begin{enumerate}
\item Let us fix $(q,q')\in\mathbb{N}^2$ and assume that $q'\leq
q$. Since the map
\[
m\mapsto \delta_+(m)=\max(dm-(m-1)/2,0)\;,
\]
is non--increasing
with range in $[0,1/2]$, one has for $0\leq p\leq \min(q-2,q')$
\[
\alpha(q,q',p)=\min(1-\delta_+(q-p)-\delta_+(q'-p),1/2)\geq
\min(1-\delta_+(2)-1/2,1/2)\;.
\]
If $d>1/4$, $\delta_+(2)= 2d-1/2$ and thus
\[
\alpha(q,q',p)\geq \min(1-2d,1/2)=1-2d\;,
\]
which proves~(\ref{e:expna}).
Finally, if $p= q-1$ and $p\leq q'$,
\begin{align*}
  \alpha(q,q',p)=\min(1-\delta_+(q-p)-\delta_+(q'-p),1/2) &\geq
\min(1-\delta_+(1)-1/2,1/2)\\
&= \min(1/2-d,1/2)=1/2-d\;,
\end{align*}
which proves~(\ref{e:expnb}).
\item Let us fix $q\in\mathbb{N}$, then for any $p\leq q-2$,
\[
\alpha(q,q,p)=\min(1-\delta_+(q-p)-\delta_+(q-p),1/2)\geq
\min(1-2\delta_+(2),1/2)\;.
\]
If $d\leq 1/4$, $\delta_+(2)=0$ and we get $\alpha(q,q,p)\geq
1/2\geq \min(2(1-2d),1/2)$. If $d>1/4$,
$2\delta_+(2)=2\delta(2)=4d-1$ and
\[
\alpha(q,q,p)\geq \min(1-(4d-1),1/2)=\min(2(1-2d),1/2)\;,
\]
which gives~(\ref{e:expnaqq}). To prove~(\ref{e:expnaqqbis}), we observe that if $p=q-1$,
\[
\alpha(q,q,p)=\min(1-2\delta_+(1),1/2)= \min(1-2d,1/2)\;.
\]
\item Let us fix $q\in\mathbb{N}$, then for any $p\leq q-1$,
\begin{align*}
\alpha(q+1,q,p)=\min(1-\delta_+(q+1-p)-\delta_+(q-p),1/2)&\geq
\min(1-\delta_+(2)-\delta_+(1),1/2)\\
&=\min(1-d-\delta_+(2),1/2)\;.
\end{align*}
If $d\leq 1/4$, $\delta_+(2)=0$ and
$\alpha(q+1,q,p)\geq\min(1-d,1/2)= 1/2$. If $d>1/4$,
$\delta_+(2)=2d-1/2$ and~(\ref{e:expnaqqpun}) follows from
\[
\alpha(q+1,q,p)\geq \min(1-d-(2d-1/2),1/2)=\min(3(1-2d)/2,1/2)\;.
\]
\item If $\beta(q,p)=0$, then $\beta(q,p)\leq \delta_+(q)$. Now
consider the case where
\[
\beta(q,p)=\max(\delta_+(p)+\delta_+(q-p)-1/2,0)>0\;,
\]
that is, $\delta_+(p)+\delta_+(q-p)-1/2>0$. In this case, $\delta_+(p)$ and
$\delta_+(q-p)$ are both positive (since
$0\leq\delta_+(\cdot)<1/2$) and they respectively equal $\delta(p)$
and $\delta(q-p)$. Then we obtain
\[
\max(\delta_+(p)+\delta_+(q-p)-1/2,0)=\delta(p)+\delta(q-p)-1/2=
\delta(q)\;,
\]
which again implies~(\ref{e:expj}).
\end{enumerate}
\end{proof}
The following result provides a bound of $\widehat{\kappa}_{j}^{(p)}$ defined
in~(\ref{e:intrepKjp1}), in the case where $p>0$. It is a refinement of
Lemma~10.1 of \cite{clausel-roueff-taqqu-tudor-2011b}. It is used in the proof
of Proposition~\ref{pro:lrd}.

\begin{lemma}\label{lem:suphjkp}
  Suppose that Assumptions A hold and let $p$ be a positive integer.  Then
  there exists some $C>0$ neither depending on $p$ nor $j$ such that for any
  $(\xi_1,\xi_2)\in \mathbb{R}^2$,
\begin{enumerate}[label=(\roman*)]
\item\label{item:lem:suphjkp1} if for any $s\in\{1,\cdots,p\}$, $s(1-2d)\neq 1$ then,
\begin{equation}\label{e:majohjkp}
|\widehat{\kappa}_{j}^{(p)}(\xi_1,\xi_2)| \leq C^p (p!)^{1-2d}\;
\frac{\gamma_j^{2(\delta_+(p)+K)}}{(1+\gamma_j|\{\xi_1\}|)^{\delta_+(p)}
(1+\gamma_j|\{\xi_2\}|)^{\delta_+(p)}}\;.
\end{equation}
\item\label{item:lem:suphjkp2}  if there exists $s\in\{1,\cdots,p\}$ such that
$s(1-2d)=1$, then,
\begin{equation}\label{e:majohjkp2}
|\widehat{\kappa}_{j}^{(p)}(\xi_1,\xi_2)| \leq C^p
(p!)^{1-2d}\;\gamma_j^{2K}\log(\gamma_j)\;.
\end{equation}
\end{enumerate}
\end{lemma}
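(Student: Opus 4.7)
The plan is to exploit the spectral density bound $|f(\lambda)|\leq C|\lambda|^{-2d}$ on $(-\pi,\pi)$, perform a cascade change of variables to rewrite the $\lambda$-integral as $J_{p,\pi}$ from Lemma~\ref{lem:Ja}, then apply the pointwise bound~(\ref{e:Jmaunified}) and integrate against the decay of $\widehat{h}_j^{(K)}$. Concretely, starting from~(\ref{e:intrepKjp1}), the spectral bound gives
\[
|\widehat{\kappa}_j^{(p)}(\xi_1,\xi_2)|\leq C^p\int_{(-\pi,\pi)^p}\prod_{i=1}^p|\lambda_i|^{-2d}\;|\widehat{h}_j^{(K)}(\Sigma_p(\lambda)+\xi_1)|\,|\widehat{h}_j^{(K)}(\Sigma_p(\lambda)-\xi_2)|\,d^p\lambda.
\]
I would then set $s_1=\Sigma_p(\lambda)$ and $s_i=s_{i-1}-\lambda_i$ for $i=2,\dots,p$, so that $\lambda_1=s_p$ and $\lambda_i=s_{i-1}-s_i$ for $i\geq 2$. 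Under this change the constraint $\lambda_i\in(-\pi,\pi)$ is accommodated by integrating $s_1$ over $|s_1|\leq p\pi$ and enlarging the ranges of $s_2,\dots,s_p$ as in~(\ref{e:Jma}), so the inner $(p-1)$-fold integral is exactly $J_{p,\pi}(s_1;2d\cdot 1_p)$.

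Next I would invoke the uniform bound~(\ref{e:Jmaunified}) from Lemma~\ref{lem:Ja}, which yields
\[
J_{p,\pi}(s_1;2d\cdot 1_p)\leq c^p(p!)^{1-2d}\pi^{2\delta_-(p)}|s_1|^{-2\delta_+(p)}(1+\log(p\pi/|s_1|))^{\varepsilon(p)},
\]
and absorb the factor $\pi^{2\delta_-(p)}$ into $C^p$. It remains to bound the outer integral
\[
\int_{-p\pi}^{p\pi}|s_1|^{-2\delta_+(p)}(1+\log(p\pi/|s_1|))^{\varepsilon(p)}\,|\widehat{h}_j^{(K)}(s_1+\xi_1)|\,|\widehat{h}_j^{(K)}(s_1-\xi_2)|\,ds_1.
\]
For this I would use the wavelet filter bound from Appendix~\ref{s:appendixB}, which under the vanishing moment assumption provides a decay of the form $|\widehat{h}_j^{(K)}(\lambda)|\leq C\gamma_j^{K+1/2}(1+\gamma_j|\{\lambda\}|)^{-\alpha}$ for $\alpha$ as large as needed. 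The two factors localize $s_1$ to a $2\pi$-periodic union of windows of width $\sim\gamma_j^{-1}$ around $-\{\xi_1\}$ and $\{\xi_2\}$; splitting the $s_1$-integral in a neighborhood of these points and using the symmetry between $\xi_1$ and $\xi_2$ (by Cauchy--Schwarz or by estimating each factor separately) produces exactly the claimed denominator $(1+\gamma_j|\{\xi_1\}|)^{\delta_+(p)}(1+\gamma_j|\{\xi_2\}|)^{\delta_+(p)}$ together with a prefactor $\gamma_j^{2K+2\delta_+(p)}$, which matches~(\ref{e:majohjkp}).

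In case~\ref{item:lem:suphjkp1}, $\varepsilon(p)=0$ and no logarithm appears in the outer integrand, so this plan delivers~(\ref{e:majohjkp}) directly. In case~\ref{item:lem:suphjkp2}, there exists $s\leq p$ with $s(1-2d)=1$, equivalently $\delta_+(p)=0$ and $\varepsilon(p)=1$; then $|s_1|^{-2\delta_+(p)}=1$ and the logarithm $\log(p\pi/|s_1|)$ against the $\gamma_j^{-1}$-scale decay of the two wavelet factors produces a $\log \gamma_j$ contribution, yielding~(\ref{e:majohjkp2}). The main obstacle is controlling the $p$-dependence uniformly: the $(p!)^{1-2d}$ factor must come only from~(\ref{eq:uniform-CJbound}), so every subsequent step has to avoid spurious combinatorial blow-ups (for instance, the union of $p$ windows in the outer integral must be handled carefully, and the constant $C$ in the wavelet bound must be $p$-independent). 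Apart from this uniformity bookkeeping and the case analysis of $\varepsilon(p)$, the argument is essentially a quantitative refinement of Lemma~10.1 of~\cite{clausel-roueff-taqqu-tudor-2011b}.
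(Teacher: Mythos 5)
Your proposal is correct and follows essentially the same route as the paper's proof: the spectral bound $f(\lambda)\leq C|\lambda|^{-2d}$, the cascade change of variables reducing the inner integral to $J_{p,a}(\cdot;2d\,1_p)$ (the paper works with the $\gamma_j$-rescaled variables $\mu_i=\gamma_j(\lambda_i+\dots+\lambda_p)$ and $J_{p,\gamma_j\pi}$, but this differs from your unscaled version only by the self-similarity of $J$), the uniform bound~(\ref{e:Jmaunified}), and then Cauchy--Schwarz plus a window-by-window analysis of the outer integral using~(\ref{eq:M_replaced_by_K}), with the resulting $O(p\log p)$ factor from the $p+1$ periodic windows absorbed into $C^p$. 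The $p$-uniformity concern you flag is exactly the point the paper addresses by tracking $p\log p$ explicitly in its intermediate bound~(\ref{e:boundK1}).
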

\begin{remark}
  In Case~\ref{item:lem:suphjkp2} of Lemma~\ref{lem:suphjkp}, we have
  $p>1/(1-2d)$, hence $\delta_+(p)=0$. Equations~(\ref{e:majohjkp2})
  and~(\ref{e:majohjkp}) can thus be written as a single bound, namely,
  \begin{equation}
    \label{eq:majo-kappa-generale}
|\widehat{\kappa}_{j}^{(p)}(\xi_1,\xi_2)| \leq C^p (p!)^{1-2d}\;
\frac{\gamma_j^{2(\delta_+(p)+K)}}{(1+\gamma_j|\{\xi_1\}|)^{\delta_+(p)}
(1+\gamma_j|\{\xi_2\}|)^{\delta_+(p)}}\;(\log\gamma_j)^{\varepsilon(p)}\;,
  \end{equation}
where $\varepsilon(p)$ is defined by~(\ref{e:varepsilontilde}).
\end{remark}
\begin{proof}
  By $(2\pi)$-periodicity of $\widehat{\kappa}_{j}^{(p)}(\xi_1,\xi_2)$ along
  both variables $\xi_1$ and $\xi_2$, we may take $\xi_1,\xi_2\in[-\pi,\pi]$.
  The remainder of the proof shows that~(\ref{eq:majo-kappa-generale}) holds
  for such $(\xi_1,\xi_2)$.

Note that by assumption,
$$
f(\lambda)\leq C \;|\lambda|^{-2d}\;,
$$
where $C>0$ only depends on
$f^*$. Using~(\ref{eq:M_replaced_by_K}),~(\ref{e:intrepKjp1}) and~(\ref{e:Jma})
with
$$
\mu_i=\gamma_j(\lambda_i+\dots+\lambda_p)\;,
$$
we get
$$
|\widehat{\kappa}_j^{(p)}(\xi_1,-\xi_2)| \leq C^p
\gamma_j^{2(K+\delta(p))}\int_{-p\gamma_j\pi}^{p\gamma_j\pi}\frac{J_{p,\gamma_j\pi}(\mu_1;2d\,1_p)\rmd \mu_1}{\prod_{i=1}^2\left(1+\gamma_j\left|\left\{{\mu_1}/{\gamma_j}+\xi_i\right\}\right|\right)^{K+\alpha}}\;.
$$
Then, by~(\ref{e:Jmaunified}), there exists $C>0$ not depending on $j,p$ such
that, for all $(\xi_1,\xi_2)\in [-\pi,\pi]^2$,
\[
|\widehat{\kappa}_j^{(p)}(\xi_1,-\xi_2)| \leq C^p (p!)^{1-2d}
\gamma_j^{2K+2(\delta(p)+\delta_-(p))}\int_{-p\gamma_j
\pi}^{p\gamma_j \pi}\frac
{|\mu_1|^{-2\delta_+(p)} (1+\log(p\gamma_j\pi/|\mu_1|))^{\varepsilon(p)}\rmd \mu_1}
{\prod_{i=1}^2\left(1+\gamma_j\left|\left\{{\mu_1}/{\gamma_j}+\xi_i\right\}\right|\right)^{K+\alpha}}\;.
\]
Using that $\delta(p)=\delta_+(p)-\delta_-(p)$ and the
Cauchy--Schwarz inequality, to obtain~(\ref{eq:majo-kappa-generale}), it is
sufficient to show that, for all  $\xi\in(-\pi,\pi]$,
\begin{equation}\label{e:boundK1}
  \int_{-p\gamma_j
    \pi}^{p\gamma_j \pi}
  \frac{|\mu_1|^{-2\delta_+(p)}(1+\log(p\gamma_j\pi/|\mu_1|))^{\varepsilon(p)}\rmd \mu_1}
  {(1+\left|\gamma_j\left\{{\mu_1}/{\gamma_j}+\xi\right\}\right|)^{2(K+\alpha)}}\leq
  C\; p\log p\; (1+\gamma_j|\xi|)^{-2\delta_+(p)}\, (\log \gamma_j)^{\varepsilon(p)} \;,
\end{equation}
where $C$ is a positive constant.

If $\delta(p)>0$ the rest of the proof is similar to that of Lemma~10.1 in
\cite{clausel-roueff-taqqu-tudor-2011b} and is thus omitted.

We now take $\delta_+(p)=0$, so that~(\ref{e:boundK1}) becomes
\begin{equation}\label{e:boundK1bis}
\int_{-p\gamma_j
\pi}^{p\gamma_j \pi}
\frac{(1+\log(p\gamma_j\pi/|\mu_1|))^{\varepsilon(p)}\rmd \mu_1}
{(1+\left|\gamma_j\left\{{\mu_1}/{\gamma_j}+\xi\right\}\right|)^{2(K+\alpha)}}\leq
C\; p\log p\;(\log \gamma_j)^{\varepsilon(p)} \;,
\end{equation}
The denominator in the integral
is a $(2\pi\gamma_j)$-periodic function of $\mu_1$, hence
the integral over
$[-p\gamma_j\pi,p\gamma_j\pi]$ is bounded by the sum of at most  $p+1$ integral
of the form
$$
I(-\gamma_j\xi+2 k\gamma_j \pi)
\quad\text{with}\quad I(y)=
\int_{A(y)} \frac{(1+\log(p\gamma_j\pi/|\mu_1|))^{\varepsilon(p)}\rmd \mu_1}
{(1+\left|\mu_1-y\right|)^{2(K+\alpha)}}\;,
$$
where $k\in\mathbb{Z}$ and $A(y)=[-p\gamma_j\pi,p\gamma_j\pi]\cap
(y-\gamma_j\pi,y+\gamma_j\pi]$. We observe that $I(y)$ is maximal at $y=0$
where it takes value
$$
I(0)=\int_{-\gamma_j\pi}^{\gamma_j\pi}
\frac{(1+\log(p\gamma_j\pi/|\mu_1|))^{\varepsilon(p)}\rmd \mu_1}
{(1+\left|\mu_1\right|)^{2(K+\alpha)}}\leq
(1+\log(p\gamma_j\pi))^{\varepsilon(p)}
\int_{-\infty}^{\infty}
\frac{(1+|\log(|\mu_1|)|)^{\varepsilon(p)}\rmd \mu_1}
{(1+\left|\mu_1\right|)^{2(K+\alpha)}} \;.
$$
Since the last integral in the previous display is finite for
$\varepsilon(p)=0,1$, we finally obtain~(\ref{e:boundK1bis}).
\end{proof}
The following Lemma will be used when identifying the leading terms of the three sums $\Sigma_{n,j}^{(0)}+\Sigma_{n,j}^{(1)}$, $\Sigma_{n,j}^{(2)}$ and $\Sigma_{n,j}^{(3)}$~:
\begin{lemma}\label{lem:c_q-cond}
Condition~(\ref{e:condcv}) implies that for any $C>0$,
\begin{equation}
  \label{eq:constants-q0is1}
\sum_{q,q'\geq 1}\sum_{p=0}^{q\wedge q'}\frac{|c_q|}{q!}
\frac{|c_{q'}|}{q'!} \; p!\;
{{q}\choose{p}}{{q'}\choose{p}}C^{\frac{q+q'}{2}}\Lambda_2(q-p,p)^{\frac{1}{2}}\Lambda_2(q'-p,p)^{\frac{1}{2}}<\infty\;,
\end{equation}
where $\Lambda_2$ is defined by~(\ref{e:KC}).
\end{lemma}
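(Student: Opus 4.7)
The plan is to collapse the summand by expanding the binomials and the $\Lambda_2$ factors, then apply the growth hypothesis on $c_q$ with a conveniently chosen $\lambda$, and finally sum first in $p$ by Cauchy--Schwarz and then in $(q,q')$ as a geometric series.

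First, using $p!\binom{q}{p}\binom{q'}{p}=\frac{q!\,q'!}{p!\,(q-p)!\,(q'-p)!}$ and $\Lambda_2(a,b)=(a!)^{1-2d}(b!)^{1-2d}$, the factors $q!$, $q'!$ cancel and the generic summand reduces to
\[
|c_q|\,|c_{q'}|\;(p!)^{-2d}\;[(q-p)!]^{-(1+2d)/2}\;[(q'-p)!]^{-(1+2d)/2}\;C^{(q+q')/2}.
\]
Second, I invoke condition~(\ref{e:condcv}): for $\lambda>0$ (to be fixed below) there is $K_\lambda>0$ with $|c_q|\leq K_\lambda(q!)^d\rme^{-\lambda q}$. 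Writing $(q!)^d=(p!)^d[(q-p)!]^d\binom{q}{p}^d$ and analogously for $(q'!)^d$, the $(p!)^d$-factors exactly cancel the $(p!)^{-2d}$, and the exponents of $(q-p)!$ and $(q'-p)!$ collapse via the key identity $d-\frac{1+2d}{2}=-\frac{1}{2}$. This yields the cleaner majorant
\[
K_\lambda^2\,\binom{q}{p}^{d}\binom{q'}{p}^{d}\,\bigl[(q-p)!(q'-p)!\bigr]^{-1/2}\,\bigl(C^{1/2}\rme^{-\lambda}\bigr)^{q+q'}.
\]

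Third, I bound $\binom{q}{p}^d\leq 2^{qd}$, $\binom{q'}{p}^d\leq 2^{q'd}$ and absorb these into the geometric base by setting $\rho:=2^d C^{1/2}\rme^{-\lambda}$. Since~(\ref{e:condcv}) is assumed for every $\lambda>0$, I may pick $\lambda$ large enough to ensure $\rho<1$. Then I sum over $p$ using Cauchy--Schwarz:
\[
\sum_{p=0}^{q\wedge q'}\frac{1}{\sqrt{(q-p)!(q'-p)!}}\leq\Bigl(\sum_{p=0}^{q\wedge q'}\frac{1}{(q-p)!}\Bigr)^{\!1/2}\Bigl(\sum_{p=0}^{q\wedge q'}\frac{1}{(q'-p)!}\Bigr)^{\!1/2}\leq\rme,
\]
by re-indexing each inner sum as a partial sum of $\sum_{k\geq 0}1/k!$. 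The outer double sum is then dominated by $K_\lambda^2\,\rme\,\sum_{q,q'\geq 1}\rho^{q+q'}=K_\lambda^2\,\rme\,\bigl(\rho/(1-\rho)\bigr)^2<\infty$, which proves~(\ref{eq:constants-q0is1}).

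No serious obstacle is anticipated: the whole argument hinges on the algebraic cancellation $d-(1+2d)/2=-1/2$ that makes the residual factorial factors square-summable after Cauchy--Schwarz, and on the freedom to take $\lambda$ arbitrarily large in~(\ref{e:condcv}) to defeat the fixed constant $C$ and the factor $2^d$ coming from the binomial bound.
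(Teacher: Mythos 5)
Your proof is correct and rests on the same ingredients as the paper's: the algebraic reduction of the summand to $|c_q||c_{q'}|C^{(q+q')/2}(p!)^{-2d}[(q-p)!(q'-p)!]^{-(1+2d)/2}$, the factorization $q!=p!\,(q-p)!\binom{q}{p}$ combined with $\binom{q}{p}^d\le 2^{qd}$, and the freedom to take $\lambda$ large in~(\ref{e:condcv}) so that the geometric base beats $C$ and $2^d$. The only cosmetic difference is in handling the sum over $p$ (the paper uses a concavity bound on $\sum_p\binom{q}{p}^d$ after writing the double sum as a square in $q$, whereas you keep the $[(q-p)!]^{-1/2}$ factors and invoke Cauchy--Schwarz and the exponential series), so this is essentially the paper's argument.
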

\begin{proof}
Let $C>0$. By definition of $\Lambda_2$ in~(\ref{e:KC}), we have
\begin{align}\nonumber
\sum_{(q,q')}\sum_{p=0}^{q\wedge q'}\frac{|c_q|}{q!}&
\frac{|c_{q'}|}{q'!} \; p!\;
{{q}\choose{p}}{{q'}\choose{p}}C^{\frac{q+q'}{2}}\Lambda_2(q-p,p)^{1/2}\Lambda_2(q'-p,p)^{1/2}\\
\nonumber
&=
\sum_{p\geq0}\sum_{q,q'\geq p\vee1}|c_q
c_{q'}|\;C^{\frac{q+q'}{2}}\;[p!]^{-2d}\;[(q-p)!(q'-p)!]^{-1/2-d}\\
\nonumber
&=\sum_{p\geq0}[p!]^{-2d}\left(\sum_{q\geq p\vee1}|c_q|\;C^{q/2}\;[(q-p)!]^{-1/2-d}\right)^2\\
\label{eq:1}
&\leq\left(\sum_{q\geq p\geq0}[p!]^{-d}|c_q|\;C^{q/2}\;[(q-p)!]^{-1/2-d}\right)^2
\end{align}
Using that $x\mapsto x^{d}$ is concave, we have, for any $q\geq0$,
$$
\sum_{p=0}^{q}(q!)^{d}[p!\,(q-p)!]^{-d}=q\cdot\left(\frac{1}{q}\sum_{p=0}^{q}{{q}\choose{p}}^{d}\right)
\leq q\left(\frac{1}{q}\sum_{p=0}^{q}{{q}\choose{p}}\right)^{d}
=q^{1-d}2^{dq}\;.
$$
We deduce that, for any $q\geq0$,
$$
\sum_{p=0}^q [p!]^{-d}\;[(q-p)!]^{-1/2-d}\leq \sum_{p=0}^q
[p!\,(q-p)!]^{-d}\leq (q!)^{-d}\; q^{1-d}2^{dq} \;.
$$
Using this to bound the sum in $p$ in~(\ref{eq:1}) and then
Condition~(\ref{e:condcv}) , we get~(\ref{eq:constants-q0is1}), which concludes
the proof of Lemma~\ref{lem:c_q-cond}.
\end{proof}
\appendix
\section{Proof of Proposition~\ref{pro:SD}}\label{sec:postponed-proof}

As in the proof of Proposition~\ref{pro:decompSnjLD}, we can take $m=1$ without
loss of generality. In what follows, $C_1,C_2,\cdots$ denote positive constants
which do not depend on $n$, $j$, $q,q',p$. The
following function $\varepsilon'$ defined on $\mathbb{R}_+$ is used in the
sequel,
\begin{equation}\label{e:varepsilonp}
\varepsilon'(a)=\1_{\{1\}}(a)=
\begin{cases}
\varepsilon'(a)=1& \mbox{ if }a=1\;,\\
\varepsilon'(a)=0 & \mbox{ otherwise.}
\end{cases}
\end{equation}
We shall prove ~\ref{item:pro:SD1i} and~\ref{item:pro:SD1ii},
 successively.

\noindent{\it Proof of~\ref{item:pro:SD1i}.}
Set $r=q-p$ and
$r'=q'-p$. The starting point of the proof is the integral expression of
$S_{n,j}^{(q,q',p)}$ given by~(\ref{e:Snjrrp}). Thereafter we follow the same
approach as in the proof of Proposition~7.1
of~\cite{clausel-roueff-taqqu-tudor-2011b}, using Lemma~\ref{lem:suphjkp} to
bound the kernel $\widehat{\kappa}_j^{(p)}$ involved in the integral expression
of $S_{n,j}^{(q,q',p)}$ instead of Lemma~10.1
of~\cite{clausel-roueff-taqqu-tudor-2011b}, replacing $2r$, $(r,r)$,
$\delta(p)$ with $r+r'$, $(r,r')$,
$\delta_+(p)$ and adding if necessary a logarithmic correction.

We obtain the following inequality, similar to~(7.2) and~(7.3) in~\cite{clausel-roueff-taqqu-tudor-2011b},
\begin{equation}\label{e:boundSnj51}
\mathbb{E}\left[\left|S_{n,j}^{(q,q',p)}\right|^2\right]\leq C_1^{2p}
(p!)^{2(1-2d)}\gamma_j^{-2+2\delta(r)+2\delta(r')+4\delta_+(p)}\gamma_j^{4K}(\log\gamma_j)^{2\varepsilon(p)}I_{n,j}\;,
\end{equation}
where, for any $j,n$,
\[
I_{n,j}= \int_{-\gamma_j\pi r}^{\gamma_j\pi
r}\int_{-\gamma_j\pi r'}^{\gamma_j\pi
r'}\frac{J_{r,\gamma_j\pi}(u_1;2d 1_r)J_{r',\gamma_j\pi}(v_1;2d 1_{r'})\rmd
u_1\rmd v_1} {(1+n\left|\{u_1+v_1\}\right|)^{2}
\left(1+\gamma_j\left|\{\frac{u_1}{\gamma_j}\}\right|\right)^{2\delta_{+}(p)}
\left(1+\gamma_j\left|\{\frac{v_1}{\gamma_j}\}\right|\right)^{2\delta_{+}(p)}}\;,
\]
and where $J_{r,\gamma_j\pi}(u_1;2d 1_r)$ and $J_{r',\gamma_j\pi}(v_1;2d
1_{r'})$ are defined in Lemma~\ref{lem:Ja}.

We now use~(\ref{e:Jmaunified}) of Lemma~\ref{lem:Ja} successively with $p=r$, $a=\gamma_j\pi$, $s_1=u_1$
and $p=r'$, $a=\gamma_j\pi$, $s_1=v_1$. We get that
\begin{eqnarray*}
I_{n,j}&\leq&
C_2^{r+r'}(r!r'!)^{1-2d}\gamma_j^{2\delta_-(r)+2\delta_-(r')}
(\log\gamma_j)^{2\varepsilon'_0}\\
&&\times\int_{\mathbb{R}^2}\frac{\1_{(-\pi r,\pi
r)}(\frac{u_1}{\gamma_j})\1_{(-\pi r',\pi
r')}(\frac{v_1}{\gamma_j})|u_1|^{-2\delta_+(r)}|v_1|^{-2\delta_+(r')}\rmd
u_1\;\rmd v_1}
{(1+n\left|\{u_1+v_1\}\right|)^2\left(1+\gamma_j\left|\{\frac{u_1}{\gamma_j}\}\right|\right)^{2\delta_{+}(p)}
\left(1+\gamma_j\left|\{\frac{v_1}{\gamma_j}\}\right|\right)^{2\delta_{+}(p)}}\;,
\end{eqnarray*}
where
$\varepsilon'_0=\frac12[\varepsilon(r)+\varepsilon(r')+2\varepsilon(p)]$.

The next step relies on the inequality $|\{x\}|\leq |x|$ on $\mathbb{R}$ and on
the $2\pi$--periodicity of $x\mapsto \{x\}$. We then get that
\begin{equation}\label{e:eq1}
I_{n,j}\leq
C_3^{r+r'}(r!r'!)^{1-2d}\gamma_j^{2\delta_{-}(r)+2\delta_{-}(r')}(\log\gamma_j)^{2\varepsilon'_0} \tilde{I}_{n,j}\;,
\end{equation}
with
\begin{align*}
\tilde{I}_{n,j}&=\int_{(-\gamma_j\pi,\gamma_j\pi)^2}
\frac{|u_1|^{-2\delta_+(r)}|v_1|^{-2\delta_+(r')}\rmd u_1\;\rmd
v_1} {(1+n\left|\{u_1+v_1\}\right|)^2(1+|u_1|)^{2\delta_{+}(p)}
(1+|v_1|)^{2\delta_{+}(p)}}\;.
\end{align*}

The bound of $\tilde{I}_{n,j}$ is obtained using the decomposition
 $\tilde{I}_{n,j}=A+2B$ with
\[
A=\int_{\Delta_{j}^{(0)}
}\frac{|u_1|^{-2\delta_+(r)}|v_1|^{-2\delta_+(r')}\rmd u_1\;\rmd
v_1} {(1+n\left|\{u_1+v_1\}\right|)^2(1+|u_1|)^{2\delta_{+}(p)}
(1+|v_1|)^{2\delta_{+}(p)}}\;,
\]
and
\[
B=\sum_{s=1}^{\gamma_j}\int_{\Delta_{j}^{(s)}
}\frac{|u_1|^{-2\delta_+(r)}|v_1|^{-2\delta_+(r')}\rmd u_1\;\rmd
v_1} {(1+n\left|\{u_1+v_1\}\right|)^2(1+|u_1|)^{2\delta_{+}(p)}
(1+|v_1|)^{2\delta_{+}(p)}}\;,
\]
where
\[
\Delta_{j}^{(s)}=\{(u_1,v_1)\in
(-\gamma_j\pi,\gamma_j\pi)^2,|u_1+v_1-2\pi s|\leq \pi\}\;,
\]
with $s\in\{-\gamma_j,\cdots,\gamma_j\}$.
This decomposition is similar to the one used in the
proof of Proposition~7.1 in~\cite{clausel-roueff-taqqu-tudor-2011b} and is
obtained by partitioning $(-\gamma_j\pi,\gamma_j\pi)^2$ using the
domains $\Delta_j^{(s)}$.

In the proof of \cite[Proposition~7.1]{clausel-roueff-taqqu-tudor-2011b},
bounds of $A$ and $B$ are provided in the case where $r=r'$ and
$\delta(r)>0$. It turns out that the same arguments apply in the present case
and yield
\begin{align*}
A&\leq
\begin{cases}
 C n^{-2+2\delta(r)+2\delta(r')}&\text{ if  $2\delta_+(r)+2\delta_+(r')>1$}\;,\\
Cn^{-1}(\log n)^{2\varepsilon'_1}\gamma_j^{\max(1-2\delta_+(r)-2\delta_+(r')-4\delta_+(p),0)}
(\log \gamma_j)^{2\epsilon'_2}& \text{otherwise,}
\end{cases}\\
B&\leq
Cn^{-1}\gamma_j^{\max(1-2\delta_+(r)-2\delta_+(p),0)+\max(1-2\delta_+(r')-2\delta_+(p),0)}
(\log \gamma_j)^{2\varepsilon'_3}\;,
\end{align*}
where
\begin{gather*}
\varepsilon'_1=\frac12\varepsilon'(2\delta_+(q-p)+2\delta_+(q'-p)),\,\varepsilon'_2=\frac12\varepsilon'(2(\delta_+(r)+\delta_+(r')+2\delta_+(p)))\;,\\
  \varepsilon'_3=\frac12\varepsilon'(2\delta_+(r)+2\delta_+(p))+\varepsilon'(2\delta_+(r')+2\delta_+(p))\;.
\end{gather*}
Hence we obtain that there exists some
$C>0$ depending only on $\delta_+(r),\delta_+(r'),\delta_+(p),d$ such that
\begin{multline*}
\tilde{I}_{n,j}\leq C
\left(n^{-\min(2(1-\delta_+(r)-\delta_+(r')),1)}(\log n)^{2\varepsilon'_1}\gamma_j^{\max(1-2\delta_+(r)-2\delta_+(r')-4\delta_+(p),0)}
(\log \gamma_j)^{2\varepsilon'_2}\right.\\
+\left.n^{-1}\gamma_j^{\max(1-2(\delta_+(r)+\delta_{+}(p)),0)+\max(1-2(\delta_+(r')+\delta_{+}(p)),0)}
(\log\gamma_j)^{2\varepsilon'_3}\right)\;.
\end{multline*}
Observe now that for any fixed $d$, there exists only a finite
number of possible values for
$\delta_+(r),\delta_+(r'),\delta_+(p)$ and then a finite number of
possible values for $C$. Then, provided we replace $C$ by its
maximum possible value, we can assume that $C$ only depends on $d$.

The bound on $\tilde{I}_{n,j}$ and~(\ref{e:boundSnj51}),(\ref{e:eq1}) then yields
\begin{align*}
\|S_{n,j}^{(q,q',p)}\|_{2}\leq
&C_4^{\frac{q+q'}{2}}\Lambda_2(q-p,p)^{1/2}\Lambda_2(q'-p,p)^{1/2}\,\gamma_j^{2K} \gamma_j^{-1+\delta_+(q-p)+\delta+(q'-p)+2\delta_+(p)}(\log\gamma_j)^{\varepsilon'_0}\\
&\times [n^{-\min(1-\delta_+(q-p)-\delta_+(q'-p),1/2)}(\log n)^{\varepsilon'_1}\gamma_j^{\max(\frac{1}{2}-\delta_+(q-p)-\delta_+(q'-p)-2\delta_+(p),0)}
(\log \gamma_j)^{\varepsilon'_2}\\
&+n^{-1/2}\gamma_j^{\max(1/2-\delta_+(q-p)-\delta_+(p),0)+\max(1/2-\delta_+(q'-p)-\delta_+(p),0)}(\log\gamma_j)^{\varepsilon'_3}] \;.
\end{align*}
Inequality~(\ref{e:boundL2P51a}) corresponds to this bound with
exponents of $\gamma_j$, $\log n$ and $\log \gamma_j$ simplified as follows.

The exponent of $\gamma_j$ is obtained by observing that
$-1+\delta_+(q-p)+\delta_+(q'-p)+2\delta_+(p)
=(-1/2+\delta_+(q-p)+\delta_+(p))+(-1/2+\delta_+(q'-p)+\delta_+(p))$ and using
$\max(-a,0)+a=\max(a,0)$ with $a=-1/2+\delta_+(q-p)+\delta_+(p)$ and
$a=-1/2+\delta_+(q'-p)+\delta_+(p)$ successively.

The log exponents are obtained by observing that, since $r\leq r'$,
$\varepsilon(r)+\varepsilon(r')+2\varepsilon(p)\leq
2(\varepsilon(r')+\varepsilon(p))\leq 4\varepsilon(r'\vee p)$.  In addition
$\varepsilon'(2\delta_+(m)+2\delta_+(m'))= 0$ iff $m+m'\neq 1/(1-2d)$ and
equals $1$ otherwise. Thus
$\varepsilon'(2\delta_+(m)+2\delta_+(m'))\leq\varepsilon(m+m')$ and we get
\[
\varepsilon'(2\delta_+(r)+2\delta_+(p))\leq \varepsilon(q),
\quad\text{and}\quad\varepsilon'(2\delta_+(r')+2\delta_+(p))\leq \varepsilon(q')\;.
\]
Finally, since
$\varepsilon$ is non--decreasing and $q\leq q'$, $r'\vee
p\leq q'$,
\[
\varepsilon'(2\delta_+(r)+2\delta_+(p))+\varepsilon'(2\delta_+(r')+2\delta_+(p))+4\varepsilon(r'\vee
p)\leq \varepsilon(q)+\varepsilon(q')+4\varepsilon(q')\leq 6\varepsilon(q')\;.
\]

\noindent{\it Proof of~\ref{item:pro:SD1ii}.} Here,
$p=0$ and thus $\widehat{\kappa}_j^{(p)}=\widehat{h}_j^{(K)\otimes
2}$. The same approach as in the proof of Proposition~7.2
in~\cite{clausel-roueff-taqqu-tudor-2011b} leads to the following inequality
which corresponds to~(7.12) in~\cite{clausel-roueff-taqqu-tudor-2011b}~:
\begin{equation}\label{e:boundSnj1}
\mathbb{E}[|S_{n,j}^{(q,q',0)}|^2]\leq
C_5\,\gamma_j^{-(q+q')(1-2d)}\gamma_j^{2(2K+1)}I_{n,j}
=C_5\gamma_j^{2(\delta(q)+\delta(q')+2K)}I_{n,j}\;,
\end{equation}
where
$$
I_{n,j}=\int_{u=-q\gamma_j\pi}^{q\gamma_j\pi}\int_{v=-q'\gamma_j\pi}^{q'\gamma_j\pi}g(u,v)J_{q,\gamma_j\pi}(u;d,\cdots,d)J_{q',\gamma_j\pi}(v;d,\cdots,d)\rmd
u\rmd v_{1}\;,
$$
with $J_{m,a}$ defined as in Lemma~\ref{lem:Ja} and with $g(u,v)$
defined for all $(u,v)\in\mathbb{R}^{2}$ by,
\begin{equation}\label{e:exprg}
g(u,v)= (1+|n\{u+v\}|)^{-2}
\frac{\left|\gamma_j\{u/\gamma_j\}\right|^{2(M-K)}\cdot\left|
\gamma_j\{v/\gamma_j \}\right|^{2(M-K)}}
{\left[(1+|\gamma_j\{u/\gamma_j \}|)
(1+|\gamma_j\{v/\gamma_j\}|)\right]^{2(M+\alpha)}} \;.
\end{equation}

As in the case $p\neq 0$, we can use the bound~(\ref{e:Jmaunified})
of $J_{m,a}$ and the inequality $|\{u\}|\leq |u|$. We get that
\begin{eqnarray*}
I_{n,j}&\leq& C_6^{q+q'} (q!q'!)^{1-2d}\gamma_j^{2\delta_-(q)
-\delta_-(q'))}\\
&&\times\int_{u=-q\gamma_j\pi}^{q\gamma_j\pi}\int_{v=-q'\gamma_j\pi}^{q'\gamma_j\pi}
\frac{ \left|\gamma_j\{\frac{u}{\gamma_j}\}\right|^{2M-2K-2\delta_+(q)}
\left|\gamma_j\{\frac{v}{\gamma_j}\}\right|^{2M-2K-2\delta_+(q')}\rmd
u\rmd
v}{(1+n|\{u+v\}|)^2\left(1+\left|\gamma_j\{\frac{u}{\gamma_j}\}\right|\right)^{2(M+\alpha)}
\left(1+\left|\gamma_j\{\frac{v}{\gamma_j}\}\right|\right)^{2(M+\alpha)}}\;.
\end{eqnarray*}

As in the proof of Proposition~7.2 of~\cite{clausel-roueff-taqqu-tudor-2011b}, we then
obtain that
\begin{equation}\label{e:boundSnj2}
I_{n,j}\leq C_7^{q+q'} (q!q'!)^{1-2d}
n^{-1}\gamma_j^{-2(\delta_{-}(q) +\delta_{-}(q'))}\;.
\end{equation}
The conclusion follows
from~(\ref{e:boundSnj1}) and~(\ref{e:boundSnj2}).

\section{Integral representations}\label{s:appendixA}
It is convenient to use an integral representation in the spectral
domain to represent the random processes (see for
example~\cite{major:1984,nualart:2006}). The stationary Gaussian
process $\{X_k,k\in\mathbb{Z}\}$ with spectral
density~(\ref{e:sdf}) can be written as
\begin{equation}\label{e:intrepX}
X_\ell=\int_{-\pi}^{\pi}\rme^{\rmi\lambda
  \ell}f^{1/2}(\lambda)\rmd\widehat{W}(\lambda)=\int_{-\pi}^{\pi}\frac{\rme^{\rmi\lambda
    \ell}f^{*1/2}(\lambda)}{|1-\rme^{-{\rmi}\lambda}|^d}\rmd\widehat{W}(\lambda),\quad\ell\in\mathbb{N}\;.
\end{equation}
This is a special case of
\begin{equation}\label{e:int}
\widehat{I}(g)=\int_{\mathbb{R}}g(x)\rmd\widehat{W}(x),
\end{equation}
where $\widehat{W}(\cdot)$ is a complex--valued Gaussian random
measure satisfying, for any Borel sets $A$ and $B$ in
$\mathbb{R}$, $\mathbb{E}(\widehat{W}(A))=0$,
$\mathbb{E}(\widehat{W}(A)\overline{\widehat{W}(B)})=|A\cap B|$
and
$$
\widehat{W}(A)=\overline{\widehat{W}(-A)}\;.
$$
The integral~(\ref{e:int}) is defined for any function $g\in
L^2(\mathbb{R})$ and one has the isometry
\[
\mathbb{E}(|\widehat{I}(g)|^2)=\int_{\mathbb{R}}|g(x)|^2\rmd x\;.
\]
The integral $\widehat{I}(g)$, moreover, is real--valued if
\[
g(x)=\overline{g(-x)}\;.
\]

We shall also consider multiple It\^{o}--Wiener integrals
\[
\widehat{I}_q(g)=\int^{''}_{\mathbb{R}^q}g(\lambda_1,\cdots,\lambda_q)\rmd
\widehat{W}(\lambda_1)\cdots\rmd\widehat{W}(\lambda_q)
\]
where the double prime indicates that one does not integrate on
hyperdiagonals $\lambda_i=\pm \lambda_j,i\neq j$. The integrals
$\widehat{I}_q(g)$ are handy because we will be able to expand our
non--linear functions $G(X_k)$ introduced in Section~\ref{s:intro}
in multiple integrals of this type.

These multiples integrals are defined for
$g\in\overline{L^2}(\mathbb{R}^{q},\mathbb{C})$, the space of
complex valued functions defined on $\mathbb{R}^{q}$ satisfying
\begin{gather}\label{e:antisym}
g(-x_{1},\cdots,-x_{q})= \overline{g(x_{1},\cdots, x_{q})}\mbox{ for }(x_{1},\cdots, x_{q}) \in \mathbb{R}^q\;,\\
\label{e:fL2} \Vert g\Vert ^{2} _{L^{2}}:= \int_{\mathbb{R}^{q}}
\left| g(x_{1},\cdots, x_{q}) \right| ^{2} \rmd x_{1}\cdots \rmd
x_{q} <\infty\;.
\end{gather}
Hermite polynomials are related to multiple integrals as follows :
if $X=\int_{\mathbb{R}}g(x)\rmd\widehat{W}(x)$ with
$\mathbb{E}(X^2)=\int_{\mathbb{R}}|g(x)|^2\rmd x=1$ and
$g(x)=\overline{g(-x)}$ so that $X$ has unit variance and is
real--valued, then
\begin{equation}\label{e:herm-integ}
H_q(X)=\widehat{I}_q(g^{\otimes
q})=\int_{\mathbb{R}^q}^{''}g(x_1)\cdots g(x_q)\rmd
\widehat{W}(x_1)\cdots\rmd\widehat{W}(x_q)\;.
\end{equation}
Since $X$ has unit variance, one has for any $\ell\in\mathbb{Z}$,
\begin{align*}
H_q(X_\ell)&= H_q\left(\int_{-\pi}^{\pi}\rme^{\rmi\xi
\ell}f^{1/2}(\xi)
\rmd\widehat{W}(\xi)\right)\\
&=\int_{(-\pi,\pi]^q}^{''}
\rme^{\rmi\ell(\xi_1+\cdots+\xi_q)}\times
\left(f^{1/2}(\xi_1)\times\cdots\times f^{1/2}(\xi_q)\right)
\;\rmd\widehat{W}(\xi_1)\cdots \rmd\widehat{W}(\xi_q)\;.
\end{align*}
Then by~(\ref{e:Wjk}), we have
\begin{equation}\label{eq:Wjk_representation}
\bW_{j,k}^{(q)}=\sum_{\ell\in\mathbb{Z}}\bh_j^{(K)}(\gamma_j
k-\ell)H_q(X_\ell)=\widehat{I}_q(\mathbf{f}_{j,k}^{(q)})
\end{equation}
with
\begin{equation}\label{e:fjk_representation1}
\mathbf{f}_{j,k}^{(q)}(\xi_1,\cdots,\xi_q)= \rme^{\rmi
k\gamma_j(\xi_1+\cdots+\xi_q)}\times
\mathbf{\widehat{h}}_{j}^{(K)}(\xi_1+\cdots+\xi_q)
f^{1/2}(\xi_1)\cdots f^{1/2}(\xi_q) \1_{(-\pi,\pi)}^{\otimes
q}(\xi) \;,
\end{equation}
because
\begin{eqnarray*}
\sum_{\ell\in\mathbb{Z}}\rme^{\rmi\ell(\xi_1+\cdots+\xi_q)}\bh_j^{(K)}(\gamma_j
k-\ell)&=&\rme^{\rmi\gamma_j
k(\xi_1+\cdots+\xi_q)}\sum_{u\in\mathbb{Z}}\rme^{-\rmi
u(\xi_1+\cdots+\xi_q)}\bh_j^{(K)}(u)\\
&=&\rme^{\rmi\gamma_j
k(\xi_1+\cdots+\xi_q)}\mathbf{\widehat{h}}_j^{(K)}(\xi_1+\cdots+\xi_q)\;,
\end{eqnarray*}
by~(\ref{e:dF}).

The following proposition can be found
in~\cite{peccati:taqqu:2011}, Formula~(9.7.32). It is an extension to our
complex--valued setting of a corresponding result
in~\cite{nualart:2006} for multiple integrals in a real--valued
setting.
\begin{proposition}\label{pro:intproduct}
  Let $(q,q')\in \mathbb{N}^2$. Assume that $f,g$ are two symmetric functions
  belonging respectively to $\overline{L^2}(\mathbb{R}^q)$ and
  $\overline{L^2}(\mathbb{R}^{q'})$ then the following product formula holds :
\begin{equation}\label{EqProdIntSto}
\widehat{I_q}(f)\widehat{I_{q'}}(g)=\sum\limits_{p=0}^{q\wedge q'}
 p
!\begin{pmatrix}q\\p\end{pmatrix}\begin{pmatrix}q'\\p\end{pmatrix}\widehat{I_{q+q'-2p}}(f\overline{\otimes}_p
g),
\end{equation}
where for any $p\in\{1,\cdots,q\wedge q'\}$
\begin{equation}\label{e:times-p}
(f\overline{\otimes}_p
g)(t_1,\cdots,t_{q+q'-2p})=\int_{\mathbb{R}^p}f(t_1,\cdots,t_{q-p},s)g(t_{q-p+1},\cdots,t_{q+q'-2p},-s)\rmd^p
s\;.
\end{equation}
\end{proposition}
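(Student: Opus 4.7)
The plan is to establish the product formula by first proving it on a dense subspace of elementary functions and then extending by continuity. I would begin by fixing a finite partition of $\mathbb{R}$ into symmetric Borel sets $A_1,\dots,A_N$ (meaning that if $A_i$ appears then so does $-A_i$) of finite Lebesgue measure, and considering the space $\mathcal{E}_q$ of symmetric elementary functions of the form $f=\sum a_{i_1,\dots,i_q}\mathbf{1}_{A_{i_1}\times\cdots\times A_{i_q}}$, where the sum runs over pairwise distinct indices with $A_{i_j}\cap(-A_{i_k})=\emptyset$ for $j\neq k$, and where the coefficients satisfy $\overline{a_{i_1,\dots,i_q}}=a_{i_1',\dots,i_q'}$ whenever $A_{i_k'}=-A_{i_k}$, so that condition~(\ref{e:antisym}) is enforced. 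For such $f$, the multiple integral $\widehat{I}_q(f)$ reduces to a finite sum of products $\widehat{W}(A_{i_1})\cdots\widehat{W}(A_{i_q})$, and the product formula becomes a deterministic combinatorial identity.

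The key combinatorial step is to expand $\widehat{I}_q(f)\widehat{I}_{q'}(g)$ as a double sum of products of the $\widehat{W}(A_{\cdot})$ factors and then reorganize by the number $p$ of \emph{coincidences} between the indices of $f$ and those of $g$, where a coincidence at $(a,b)$ means $A_{i_a}=-A_{j_b}$. Each such coincidence contracts a factor $\widehat{W}(A_{i_a})\widehat{W}(-A_{i_a})=\widehat{W}(A_{i_a})\overline{\widehat{W}(A_{i_a})}$, whose expectation $|A_{i_a}|$ represents integration in the $s$-variable. The minus sign in the argument $-s$ of $g$ in~(\ref{e:times-p}) arises precisely from the identity $\widehat{W}(-A)=\overline{\widehat{W}(A)}$. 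The remaining non-contracted factors assemble into a multiple integral of order $q+q'-2p$, and the coefficient $p!\binom{q}{p}\binom{q'}{p}$ counts the $\binom{q}{p}$ choices of contracted indices from $f$, the $\binom{q'}{p}$ choices from $g$, and the $p!$ bijections pairing them; the symmetry of $f$ and $g$ absorbs the permutations within each block.

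To pass from elementary functions to general symmetric functions in $\overline{L^2}(\mathbb{R}^q)$ and $\overline{L^2}(\mathbb{R}^{q'})$, I would invoke three facts: (i) $\mathcal{E}_q$ is dense in the symmetric subspace of $\overline{L^2}(\mathbb{R}^q)$; (ii) $\widehat{I}_q$ is an isometry (up to the factor $\sqrt{q!}$) from this subspace into $L^2(\Omega)$, so both sides of~(\ref{EqProdIntSto}) depend on $(f,g)$ continuously in $L^1(\Omega)$ by Cauchy--Schwarz; and (iii) the contraction map $(f,g)\mapsto f\overline{\otimes}_p g$ is continuous from the product of symmetric $L^2$ spaces into $\overline{L^2}(\mathbb{R}^{q+q'-2p})$, which follows from Fubini combined with symmetry. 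One then takes approximating sequences $f_n\to f$, $g_n\to g$ in $\mathcal{E}_q\times\mathcal{E}_{q'}$ and passes to the limit on both sides.

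The main obstacle is the bookkeeping associated with the complex-conjugation relation $\widehat{W}(-A)=\overline{\widehat{W}(A)}$. In the standard real-variable formulation (see~\cite{nualart:2006}), the contraction in~(\ref{e:times-p}) is simply an integration in $s$ with no sign flip; in the spectral/complex setting one must pair $\widehat{W}(A_{i_a})$ with $\overline{\widehat{W}(A_{i_a})}$ rather than with itself (the latter has zero expectation for a symmetric atom), and this pairing is exactly what introduces the change of sign in the last block of arguments of $g$. Once the elementary-function case is verified with the correct sign conventions on $\widehat{W}$, the density argument goes through routinely; the real difficulty is not analytic but purely combinatorial and notational.
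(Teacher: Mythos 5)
The paper does not actually prove Proposition~\ref{pro:intproduct}: it is quoted verbatim from \cite{peccati:taqqu:2011}, Formula~(9.7.32), as the Hermitian (complex spectral) analogue of the real-valued product formula in \cite{nualart:2006}. Your elementary-functions-plus-density argument is therefore being compared not to a proof in the paper but to the standard proof of the cited result, and in its broad lines it is exactly that proof. The two points you single out are indeed the essential ones: the covariance rule $\mathbb{E}[\widehat{W}(A)\widehat{W}(B)]=\mathbb{E}[\widehat{W}(A)\overline{\widehat{W}(-B)}]=|A\cap(-B)|$ is what forces the argument $-s$ in~(\ref{e:times-p}), and the coefficient $p!\binom{q}{p}\binom{q'}{p}$ is the count of contractions once the symmetry of $f$ and $g$ has absorbed the within-block permutations. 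The density step is also sound: $\widehat{I}_q$ is an isometry up to $\sqrt{q!}$ on symmetric kernels, and $\|f\overline{\otimes}_p g\|_{L^2}\leq\|f\|_{L^2}\|g\|_{L^2}$ by Cauchy--Schwarz, so both sides of~(\ref{EqProdIntSto}) pass to the limit.

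There is, however, one step that would fail as literally written. You assert that for elementary kernels the identity "becomes a deterministic combinatorial identity" and that a coincidence contributes the factor $\widehat{W}(A)\overline{\widehat{W}(A)}$ "whose expectation $|A|$ represents integration in the $s$-variable." But $\widehat{W}(A)\overline{\widehat{W}(A)}=|\widehat{W}(A)|^2$ is a random variable, not its expectation; you cannot replace it by $|A|$ termwise, since $|\widehat{W}(A)|^2-|A|$ is a nonzero second-chaos element. The standard repair is one of the following: (a) keep the atoms of $f$ and $g$ off all hyperdiagonals, refine the partition, and show that the accumulated contribution of the terms $|\widehat{W}(A)|^2-|A|$ vanishes in $L^2$ because it is carried by a diagonal set of shrinking measure; or (b) reduce the coinciding-atom case to the one-dimensional Hermite product identity $H_q(x)H_{q'}(x)=\sum_{p}p!\binom{q}{p}\binom{q'}{p}H_{q+q'-2p}(x)$ via~(\ref{e:herm-integ}), which is in effect how \cite{nualart:2006} proceeds (first $q'=1$, then induction on $q'$). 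With that step supplied, your argument is complete and matches the route taken in the references the paper relies on.
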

\section{The wavelet filters}\label{s:appendixB}
The sequence $\{Y_{t}\}_{t\in\mathbb{Z}}$ can be formally
expressed as
$$
Y_{t}=\Delta^{-K}G(X_{t}), \quad t\in\mathbb{Z}\;.
$$
The study of the asymptotic behavior of the scalogram of $\{Y_{t}\}_{t\in\mathbb{Z}}$ at different scales involve multidimensional wavelets coefficients of
$\{G(X_t)\}_{t\in\mathbb{Z}}$ and of $\{Y_t\}_{t\in\mathbb{Z}}$.
To obtain them, one applies a multidimensional linear filter
$\bh_j(\tau),\tau\in\mathbb{Z}=(h_{j,\ell}(\tau))$, at each scale index $j\geq 0$. We shall
characterize below the multidimensional filters $\bh_j(\tau)$ by their discrete
Fourier transform~:
\begin{equation}\label{e:dF}
\widehat{\bh}_j(\lambda)=\sum_{\tau\in\mathbb{Z}}\bh_j(\tau)
\rme^{-\rmi \lambda\tau },\,\lambda\in [-\pi,\pi]\;,\quad
\bh_j(\tau)=\frac{1}{2\pi}\int_{-\pi}^{\pi}\widehat{\bh}_j(\lambda)\rme^{\rmi\lambda\tau}\rmd\lambda,\tau\in\mathbb{Z}\;.
\end{equation}
The resulting wavelet coefficients $\bW_{j,k}$, where $j$ is the
scale index and $k$ the location are defined as
\begin{equation}\label{e:W1}
\bW_{j,k}=\sum_{t\in\mathbb{Z}}\bh_j(\gamma_j
k-t)Y_{t}=\sum_{t\in\mathbb{Z}}\bh_j(\gamma_j
k-t)\Delta^{-K}G(X_{t}),\,j\geq 0, k\in\mathbb{Z},
\end{equation}
where $\gamma_j\uparrow \infty$ as $j\uparrow \infty$ is a
sequence of non--negative scale factors applied at scale index $j$, for
example $\gamma_j=2^j$.  We do not assume that the wavelet
coefficients are orthogonal nor that they are generated by a
multiresolution analysis. Our assumption on the filters
$\bh_j=(h_{j,\ell})$ are as follows~:
\begin{enumerate}[label=(W-\alph*)]
\item\label{ass:w-a} \underline{Finite support}: For each $\ell$ and $j$,
  $\{h_{j,\ell}(\tau)\}_{\tau\in\mathbb{Z}}$ has finite support.
\item\label{ass:w-b} \underline{Uniform smoothness}: There exists
$M\geq K$, $\alpha>1/2$
  and $C>0$ such that for all $j\geq0$ and $\lambda\in [-\pi,\pi]$,
    \begin{equation}\label{e:majoHj}
    |\widehat{\bh}_j(\lambda)|\leq \frac{C\gamma_j^{1/2}|\gamma_j\lambda|^M}{(1+\gamma_j|\lambda|)^{\alpha +M}}\;.
    \end{equation}
By $2\pi$-periodicity of $\widehat{h}_j$ this inequality can be
extended to $\lambda\in\mathbb{R}$ as
\begin{equation}\label{EqMajoHjR}
|\widehat{\bh}_{j}(\lambda)|\leq C
\frac{\gamma_j^{1/2}|\gamma_j\{\lambda\}|^M}
{(1+\gamma_j|\{\lambda\}|)^{\alpha+M}}\;.
\end{equation}
where $\{\lambda\}$ denotes the element of $(-\pi,\pi]$ such that
$\lambda-\{\lambda\}\in2\pi\mathbb{Z}$.
\item\label{ass:w-c}
\underline{Asymptotic behavior}: There exists a sequence of phase functions $\Phi_j :\mathbb{R}\rightarrow (-\pi,\pi]$ and some non identically
  zero function $ \widehat{\bh}_{\infty}$ such that
\begin{equation}\label{EqLimHj}
\lim_{j \to
+\infty}(\gamma_j^{-1/2}\widehat{\bh}_{j}(\gamma_j^{-1}\lambda))=
\widehat{\bh}_{\infty}(\lambda)\;,
\end{equation}
locally uniformly on $\lambda\in\mathbb{R}$.
\end{enumerate}
In~\ref{ass:w-c} {\it locally uniformly} means that for all compact
$K\subset \mathbb{R}$,
\[
\sup_{\lambda\in K}\left|\gamma_j^{-1/2}\widehat{\bh}_j(\gamma_j^{-1}\lambda)\rme^{\rmi \Phi_j(\lambda)}-\widehat{\bh}_{\infty}(\lambda)\right|\to 0\;.
\]

Assumptions (\ref{e:majoHj}) and~(\ref{EqLimHj}) imply that for
any $\lambda\in\mathbb{R}$,
\begin{equation}\label{EqMajoHinf}
|\widehat{\bh}_{\infty}(\lambda)|\leq
C\frac{|\lambda|^M}{(1+|\lambda|)^{\alpha+M}}\;.
\end{equation}
Hence $\widehat{\bh}_{\infty}$ has entries in $L^{2}(\mathbb{R})$. We let
$\bh_{\infty}$ be the vector of $L^2(\mathbb{R})$ inverse Fourier transforms of
$\widehat{h}_{\ell,\infty}$, that is
\begin{equation}\label{eq:TFdef}
\widehat{\bh}_{\infty}(\xi)=\mathfrak{F}(\bh_{\infty})(\xi)=\int_{\mathbb{R}^q}
\bh_{\infty}(t)\rme^{-\rmi t^T\xi}\;\rmd^q t,\quad
\xi\in\mathbb{R}^q\;,
\end{equation}
is defined for any $f\in L^2(\mathbb{R}^{q},\mathbb{C})$.

Observe that while $\widehat{\bh}_j$ is $2\pi$--periodic, the
function $\widehat{\bh}_{\infty}$ has non--periodic
entries on $\mathbb{R}$. For the connection between these
assumptions on $h_j$ and corresponding assumptions on the scaling
function $\varphi$ and the mother wavelet $\psi$ in the classical
wavelet setting see \cite{moulines:roueff:taqqu:2007:jtsa}. In
particular, in that case, one has
$\widehat{h}_{\infty}=\widehat{\varphi}(0)\overline{\widehat{\psi}}$.

A more convenient way to express the wavelet coefficients
$\bW_{j,k}$ defined in~(\ref{e:W1}) is to incorporate the linear
filter $\Delta^{-K}$ in~(\ref{e:W1}) into the filter $\bh_j$ and
denote the resulting filter $\bh_j^{(K)}$. Then
\begin{equation}\label{e:W2}
\bW_{j,k}=\sum_{t\in\mathbb{Z}}\bh_j^{(K)}(\gamma_j k-t)G(X_{t})\;,
\end{equation}
where
\begin{equation}\label{e:HjkVSHj}
\widehat{\bh}_j^{(K)}(\lambda)=(1-\rme^{-{\rmi}\lambda})^{-K}
\widehat{\bh}_j(\lambda)
\end{equation}
is the discrete Fourier transform of $\bh_j^{(K)}$.
Using~(\ref{EqMajoHjR}) we get,
\begin{equation}\label{e:hK}
\left| \widehat{\bh}_j^{(K)}(\lambda) \right| \leq C
  \gamma_j^{1/2+K}\;\frac{|\gamma_j\{\lambda\}|^{M-K}}{(1+\gamma_j|\{\lambda\}|)^{\alpha+M}},\quad\lambda\in\mathbb{R},\,j\geq 1\;.
\end{equation}
In particular, since we assume if $M\geq K$, we get
\begin{equation}
  \label{eq:M_replaced_by_K}
  \left| \widehat{\bh}_j^{(K)}(\lambda) \right| \leq C
  \gamma_j^{1/2+K}\;(1+\gamma_j|\{\lambda\}|)^{-\alpha-K},
  \quad\lambda\in\mathbb{R},\,j\geq 1\;.
\end{equation}

By Assumption~(\ref{e:majoHj}), $\bh_j$ has null moments up to order
$M-1$, that is, for any $m\in\{0,\cdots,M-1\}$,
\begin{equation}\label{e:mom}
\sum_{t\in \mathbb{Z}}\bh_j(t)t^m =0\;.
\end{equation}
Observe that $\Delta^K Y$ is centered by definition. However,
by~(\ref{e:mom}), the definition of $\bW_{j,k}$ only
depends on $\Delta^M Y$. In particular, provided that $M\geq K+1$,
its value is not modified if a constant is added to $\Delta^K Y$,
whenever $M\geq K+1$.

\bibliographystyle{plainnat}
\bibliography{lrd}

\begin{thebibliography}{16}
\providecommand{\natexlab}[1]{#1}
\providecommand{\url}[1]{\texttt{#1}}
\expandafter\ifx\csname urlstyle\endcsname\relax
  \providecommand{\doi}[1]{doi: #1}\else
  \providecommand{\doi}{doi: \begingroup \urlstyle{rm}\Url}\fi

\bibitem[Abry and Veitch(1998)]{abry:veitch:1998}
P.~Abry and D.~Veitch.
\newblock Wavelet analysis of long-range-dependent traffic.
\newblock \emph{IEEE Trans. Inform. Theory}, 44\penalty0 (1):\penalty0 2--15,
  1998.
\newblock ISSN 0018-9448.

\bibitem[Abry et~al.(2011)Abry, H., and Pipiras]{abry-helgason-pipiras-2011}
P.~Abry, Helgason H., and V.~Pipiras.
\newblock Wavelet-based analysis of non-{G}aussian long-range dependent
  processes and estimation of the {H}urst parameter.
\newblock \emph{Lithuanian Mathematical Journal}, 51\penalty0 (3):\penalty0
  287--302, 2011.

\bibitem[Bardet(2000)]{bardet:2000T}
J.-M. Bardet.
\newblock Testing for the presence of self-similarity of {G}aussian time series
  having stationary increments.
\newblock \emph{Journal of Time Series Analysis}, 21:\penalty0 497--515, 2000.

\bibitem[Bardet and Tudor(2010)]{bardet:tudor:2010}
J.-M. Bardet and C.~A. Tudor.
\newblock A wavelet analysis of the {R}osenblatt process: chaos expansion and
  estimation of the self-similarity parameter.
\newblock \emph{Stochastic Process. Appl.}, 120\penalty0 (12):\penalty0
  2331--2362, 2010.
\newblock ISSN 0304-4149.
\newblock \doi{10.1016/j.spa.2010.08.003}.
\newblock URL \url{http://dx.doi.org/10.1016/j.spa.2010.08.003}.

\bibitem[Clausel et~al.(2011)Clausel, Roueff, Taqqu, and
  Tudor]{clausel-roueff-taqqu-tudor-2011a}
M.~Clausel, F.~Roueff, M.~S. Taqqu, and C.~Tudor.
\newblock Large scale behavior of wavelet coefficients of non-linear
  subordinated processes with long memory.
\newblock \emph{Applied and Computational Harmonic Analysis}, 2011.
\newblock ISSN 1063-5203.
\newblock \doi{10.1016/j.acha.2011.04.003}.
\newblock URL
  \url{http://www.sciencedirect.com/science/article/pii/S1063520311000601}.

\bibitem[Clausel et~al.(2013)Clausel, Roueff, Taqqu, and
  Tudor]{clausel-roueff-taqqu-tudor-2011b}
M.~Clausel, F.~Roueff, M.~S. Taqqu, and C.~Tudor.
\newblock Wavelet estimation of the long memory parameter for {H}ermite
  polynomial of {G}aussian processes.
\newblock Technical report, HAL, 2013.
\newblock URL \url{http://dx.doi.org/10.1051/ps/2012026}.
\newblock To appear in ESAIM P.S.

\bibitem[Flandrin(1992)]{flandrin:1992}
P.~Flandrin.
\newblock Wavelet analysis and synthesis of fractional {B}rownian motion.
\newblock \emph{IEEE Trans. Inform. Theory}, 38\penalty0 (2, part 2):\penalty0
  910--917, 1992.
\newblock ISSN 0018-9448.

\bibitem[Major(1981)]{major:1984}
P.~Major.
\newblock \emph{Multiple {W}iener-{I}t\^o integrals}, volume 849 of
  \emph{Lecture Notes in Mathematics}.
\newblock Springer, Berlin, 1981.
\newblock ISBN 3-540-10575-1.

\bibitem[Moulines et~al.(2007)Moulines, Roueff, and
  Taqqu]{moulines:roueff:taqqu:2007:jtsa}
E.~Moulines, F.~Roueff, and M.~S. Taqqu.
\newblock On the spectral density of the wavelet coefficients of long memory
  time series with application to the log-regression estimation of the memory
  parameter.
\newblock \emph{J. Time Ser. Anal.}, 28\penalty0 (2):\penalty0 155--187, 2007.

\bibitem[Nualart(2006)]{nualart:2006}
D.~Nualart.
\newblock \emph{The {M}alliavin Calculus and Related Topics}.
\newblock Springer, 2006.

\bibitem[Peccati and Taqqu(2011)]{peccati:taqqu:2011}
G.~Peccati and M.S. Taqqu.
\newblock \emph{Wiener Chaos: Moments, Cumulants and Diagrams}.
\newblock Springer, 2011.

\bibitem[Roueff and Taqqu(2009)]{roueff-taqqu-2009}
F.~Roueff and M.~S. Taqqu.
\newblock Central limit theorems for arrays of decimated linear processes.
\newblock \emph{Stoch. Proc. App.}, 119\penalty0 (9):\penalty0 3006--3041,
  2009.

\bibitem[Taqqu(1975)]{taqqu:1975}
M.~S. Taqqu.
\newblock Weak convergence to fractional {B}rownian motion and to the
  {R}osenblatt process.
\newblock \emph{Z. Wahrsch. verw. Gebiete}, 31:\penalty0 287--302, 1975.

\bibitem[Taqqu(1979)]{Taq79}
M.~S. Taqqu.
\newblock Central limit theorems and other limit theorems for functionals of
  gaussian processes.
\newblock \emph{Z. Wahrsch. verw. Gebiete}, 70:\penalty0 191--212, 1979.

\bibitem[Tudor(2013)]{tudor:2013}
C.~Tudor.
\newblock \emph{Analysis of variations for self-similar processes}.
\newblock Springer, 2013.

\bibitem[Veitch and Abry(1999)]{veitch:abry:1999}
D.~Veitch and P.~Abry.
\newblock A wavelet-based joint estimator of the parameters of long-range
  dependence.
\newblock \emph{IEEE Trans. Inform. Theory}, 45\penalty0 (3):\penalty0
  878--897, 1999.
\newblock ISSN 0018-9448.

\end{thebibliography}
\end{document}